\newcommand{\C}{{\mathbb C}}
\newcommand{\Z}{{\mathbb Z}}
\newcommand{\N}{{\mathbb N}}
\newcommand{\E}{{\mathbb E}}
\newcommand{\F}{{\mathbb F}}
\newcommand{\D}{{\mathbb D}}
\newtheorem{theorem}{Theorem}[section]
\newtheorem{lemma}[theorem]{Lemma}
\newtheorem{remark}[theorem]{Remark}
\newtheorem{corollary}[theorem]{Corollary}
\newtheorem{proposition}[theorem]{Proposition}
\newtheorem{definition}[theorem]{Definition}
\newtheorem{modification}[theorem]{Modification}
\def\cal{\mathcal}
\newcommand{\call}[0]{{\cal L}}
\newcommand{\calg}[0]{{\cal G}}
\newcommand{\cala}[0]{{\cal A}}
\newcommand{\cale}[0]{{\cal E}}
\newcommand{\calk}[0]{{\cal K}}
\newcommand{\calm}[0]{{\cal M}}
\begin{document}

%\title{Universal property}
\title[Equivariant $KK$-theory]{The universal property of inverse semigroup equivariant $KK$-theory}
\author[B. Burgstaller]{Bernhard Burgstaller}
\address{Departamento de Matematica, Universidade Federal de Santa Catarina, CEP 88.040-900 Florian\'opolis-SC, Brasil}
\email{bernhardburgstaller@yahoo.de}
%\thanks{This work}
%\date{}
%\address{Institute}
%\email{bernhardburgstaller@yahoo.de}
%\thanks{This}
\subjclass{19K35, 20M18}
\keywords{universal property, stable split exact homotopy functor, $KK$-theory, inverse semigroup}

\begin{abstract}
Higson proved that every homotopy invariant, stable and
split exact functor
from the category of $C^*$-algebras to an additive
category factors through Kasparov's $KK$-theory.
By adapting a group equivariant generalization of this result %due to Thomsen
by Thomsen,
we
%are able to
generalize Higson's result to the inverse semigroup equivariant setting.

%
%By adapting a generalizition of this result by Thomsen to the category
%of group equivariant $C^*$-algebras,
%we are able to generalize Higson's result to the category of inverse semigroup equivariant
%$C^*$-algebras.
\end{abstract}

\maketitle

\section{Introduction}

In \cite{cuntz1984}, Cuntz noted that if $F$ is a homotopy invariant, stable
and split exact functor from the category %${\bf C^*}$
of separable $C^*$-algebras
to the category of abelian groups then Kasparov's $KK$-theory acts on $F$, that is,
every element of $KK(A,B)$ induces a natural map $F(A) \rightarrow F(B)$.
Higson \cite{higson}, on the other hand,
% using elements of Cuntz' findings
developed Cuntz' findings further and proved that
every such functor $F$ factorizes through
the category ${\bf K}$ consisting of separable $C^*$-algebras as the object class
and $KK$-theory together with the Kasparov product %as composition
as the
morphism class, that is, $F$ is the composition $\hat F \circ \kappa$ of a universal functor
$\kappa$ from the class of $C^*$-algebras
%and $*$-homomorhisms
to ${\bf K}$ and a functor $\hat F$
from ${\bf K}$ to abelian groups.
%In other words, $E$ may be written as the composition $\hat E \circ C$ of a universal functor
%$C$ from the class of $C^*$-algebras to ${\bf K}$ and a functor $\hat E$
%from ${\bf K}$ to abelian groups.

In \cite{thomsen}, Thomsen generalized Higson's findings
to the group equivariant setting by replacing everywhere
in the above statement algebras by equivariant algebras, $*$-homomorphisms by equivariant $*$-homomorphisms
and $KK$-theory by equivariant $KK$-theory (the proof is however far from
such a straightforward replacement).
Meyer \cite{meyer2000} used a different approach in generalizing Higson's result,
and generalized it to the setting of action groupoids $G \ltimes X$.
%
% (containing, of course,
%groups).

In this note we extend Higson's universality result to the inverse
semigroup equivariant setting.
%, so we reprove analogs of Higson's theorems
Contrary to the difficulties in generalizing Higson's proof
to the group equivariant setting, our generalization is
a simple adaption of Thomsen's proof.
%That is why we will not reprove everthing from Higson and Thomsen,
%but rather just indicate the differences to Thomsen's self-contained proof.

%In this sense this note %is not self-contained and
%cannot be read without
%Thomsen's paper.

Indeed, it mainly considers only single elements of the group and
the composition law in the group plays a subsidiary role, so
that the differences between inverse semigroups and groups
are moderate.

More specifically, the following will be shown.
Let $G$ be a countable unital inverse semigroup
and denote by ${\bf C^*}$ the category consisting of (ungraded) separable $G$-equivariant $C^*$-algebras
as objects and $G$-equivariant $*$-homomorphisms as morphisms.
Denote by {\bf Ab} the category of abelian groups. A functor $F$
from ${\bf C^*}$ to {\bf Ab} is called {\em stable} if $F(\varphi)$
is an isomorphism for every $G$-equivariant corner embedding
$\varphi:A \rightarrow A \otimes \calk$, where $A$ is an $G$-$C^*$-algebra,
$\calk$ denotes the
compacts on some separable Hilbert space, and $A \otimes \calk$
is a $G$-algebra where the $G$-action may be arbitrary and not necessarily
diagonal.
Similarly, $F$ is called {\em homotopy invariant} if any two homotopic morphisms
$\varphi_0,\varphi_1:A \rightarrow B$ in ${\bf C^*}$ satisfy $F(\varphi_0)= F(\varphi_1)$,
and
{\em split exact} if $F$ turns every short split exact sequence in
${\bf C^*}$ canonically into a short split exact sequence in {\bf Ab}.

Let ${\bf K^G}$ the denote the category consisting of separable $G$-equivariant $C^*$-algebras
as object class and the $G$-equivariant $KK$-theory group $KK^G(A,B)$
as the morphism set between two objects $A$ and $B$; thereby, composition of morphisms
is given by the Kasparov product ($g \circ f := f \otimes_B g$ for $f \in KK^G(A,B)$ and $g \in KK^G(B,C)$).
The one-element of the ring $KK^G(A,A)$ is denoted by $1_A$.

${\bf K^G}$ is an additive category ${\bf A}$, that means, the homomorphism
set ${\bf A}(A,B)$ is an abelian group and the composition law
is bilinear. We call a functor $F$ from ${\bf C^*}$ into an additive categtory ${\bf A}$
homotopy invariant, stable and split exact if the functor
$B \mapsto {\bf A}(F(A),F(B))$ enjoys these properties for every object $A$.
This notion is justified by the fact that this property is equivalent
to saying that $F$ itself satisfies these properties in %the most natural meaning,
the sense introduced above for ${\bf Ab}$,
see the properties (i)-(iii) in Higson \cite{higson}, page 269,
or Lemma \ref{lemmaFadditivecat}.

Let $\kappa$ %$C$ %be
denote the functor from ${\bf C^*}$ to ${\bf K^G}$
which is identical on objects and maps a morphism $g:A \rightarrow B$
to the morphism $g_*(1_A) \in KK^G(A,B)$.
Then we have the following proposition, theorems and corollary.

\begin{proposition}  \label{propositionFunctor}
Let $G$ be a countable unital inverse semigroup.
Let $A$ be an object in ${\bf C^*}$. Then $B \mapsto KK^G(A,B)$
is a homotopy invariant, stable and split exact covariant functor from
${\bf C^*}$ to {\bf Ab}.
\end{proposition}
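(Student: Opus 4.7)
The plan is to verify each of the three properties — homotopy invariance, stability, split exactness — for the functor $B \mapsto KK^G(A,B)$ in turn, by appealing to the corresponding formal property of $KK^G$ itself in its second variable. Covariance of the assignment is immediate from the Kasparov pushforward $g \mapsto g_*(1_A)\otimes_B(-)$.

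For homotopy invariance, suppose $\varphi_0,\varphi_1\colon B\to D$ are equivariantly homotopic via $\Phi\colon B\to C([0,1],D)$. The two evaluation maps $\mathrm{ev}_t\colon C([0,1],D)\to D$ ($t=0,1$) are both left inverses to the canonical inclusion $\iota\colon D\hookrightarrow C([0,1],D)$, and $\iota_*$ is an isomorphism on $KK^G(A,-)$ by the built-in homotopy invariance of $KK^G$ itself. Hence $(\mathrm{ev}_0)_*=(\mathrm{ev}_1)_*=(\iota_*)^{-1}$, and composing with $\Phi_*$ yields $(\varphi_0)_*=(\varphi_1)_*$. For stability, I would simply invoke the fact that a $G$-equivariant corner embedding $\varphi\colon B\to B\otimes\calk$ — with arbitrary, not necessarily diagonal, $G$-action on the target — is already a $KK^G$-equivalence, which forces $\varphi_*\colon KK^G(A,B)\to KK^G(A,B\otimes\calk)$ to be an isomorphism. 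For split exactness, given $0\to I\xrightarrow{j} B\xrightarrow{q} B/I\to 0$ with equivariant splitting $s$, the pushforward $s_*$ provides a right inverse to $q_*$ at the $KK^G(A,-)$ level, so the substantive task is to check exactness in the middle and injectivity of $j_*$. This is the standard half-exactness of $KK^G$ in the second variable in the presence of a splitting, proved by using $s$ to modify a Kasparov cycle representing a class in $\ker q_*$ into one whose underlying data genuinely live over the ideal $I$.

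The main obstacle — and essentially the only reason this proposition requires any argument at all — is to confirm that the three formal properties quoted above really are available for inverse semigroup equivariant $KK^G$, most delicately the stability axiom, where the $G$-action on $B\otimes\calk$ is allowed to be arbitrary. As the introduction stresses, however, each of these verifications engages only with one semigroup element at a time, so the standard group-equivariant proofs of homotopy invariance, stability and split exactness of $KK^G$ in its second variable transfer essentially unchanged to the inverse semigroup setting.
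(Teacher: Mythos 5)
Your plan --- verify each property by appealing to the corresponding behaviour of $KK^G$ in its second variable --- is in spirit what the paper does: Proposition \ref{propositionFunctor} is not proved directly but deferred to Thomsen's group-equivariant proofs, with the inverse-semigroup modifications catalogued in Sections \ref{sectiondifferences}--\ref{sectionproof}. Two caveats, one small, one significant.

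The homotopy-invariance argument is circular as stated: $\iota\circ\mathrm{ev}_t$ is only \emph{homotopic} to the identity on $D[0,1]$, so concluding that $\iota_*$ is an isomorphism with inverse $(\mathrm{ev}_t)_*$ already uses the homotopy invariance of $KK^G(A,-)$ you are trying to prove. The clean argument skips $\iota$ entirely: by Definition \ref{defCycle} a class in $KK^G(A,D[0,1])$ \emph{is} a homotopy between its two endpoint classes, so $(\mathrm{ev}_0)_*=(\mathrm{ev}_1)_*$ holds by definition of the equivalence relation; composing with the pushforward by the equivariant homotopy $H\colon B\to D[0,1]$ gives $(\varphi_0)_*=(\varphi_1)_*$.

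The significant issue is that you ``simply invoke'' stability under a $G$-equivariant corner embedding $B\to B\otimes\calk$ with an \emph{arbitrary}, non-diagonal $G$-action, and likewise half-exactness, as though these were formal consequences of the definition of $KK^G$. They are not: already in the group case these are theorems whose proofs run through the Cuntz-cocycle picture $\E^G(A,B)$ of Definition \ref{defKaspCocycle} and the isomorphism $\Phi\colon\big(\E^G(A,B)/\sim\big)\longrightarrow KK^G(A,B\otimes\calk)$ of Theorem \ref{thmisochi}. In that picture stability becomes essentially tautological --- the cocycle data already live in $\calm(B\otimes\calk)$, and a further $\calk$-factor is absorbed with the $\overline{\beta\otimes id_\calk}$-invariant isometries $V_1,V_2$ of the Lemma preceding Definition \ref{defKaspCocycle} --- and split exactness is established most naturally there as well. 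Your closing claim that the group proofs ``transfer essentially unchanged'' is indeed the paper's thesis, but it is something to be \emph{checked} along this specific route: that is what the cocycle bookkeeping of Modifications \ref{modificationU}--\ref{lateModification} and the surjectivity argument inside Theorem \ref{thmisochi} are for. It does not follow from the soft remark that only one semigroup element is in play at a time, and without actually running the argument through $\Phi$ you have not obtained stability for arbitrary actions in the inverse-semigroup setting.
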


\begin{theorem}   \label{theoremThomsen}
Let $G$ be a countable unital inverse semigroup.
Let $F$ be a homotopy invariant, stable and split exact covariant functor from
${\bf C^*}$ to {\bf Ab}. Let $A$ be an object in ${\bf C^*}$
and $d$ an element in $F(A)$.
Then there exists a unique natural transformation $\xi$
from the functor $B \mapsto KK^G(A,B)$ to the functor $F$
such that $\xi_A(1_A) = d$.
%Then there exists a unique natural transformation $T:KK^G(A,-) \longrightarrow F(-)$ of %functors
%such that $T_A(1_A) = d$.
\end{theorem}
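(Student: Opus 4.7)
The plan is to adapt the proof of Thomsen \cite{thomsen}, which in turn extends Higson \cite{higson} to the equivariant setting via the Cuntz picture of $KK$-theory. The basic technical input is that every $x \in KK^G(A,B)$ is represented by a $G$-equivariant quasi-homomorphism $(\varphi,\bar\varphi) : A \rightrightarrows M(B \otimes \calk)$ with $\varphi(a)-\bar\varphi(a) \in B\otimes \calk$, where $\calk$ carries a (possibly non-diagonal) $G$-action. The reason this adaption is routine, as the author notes in the introduction, is that in building the picture one only acts by single elements of $G$ on $C^*$-algebras and uses the intertwining identity $\varphi(g\cdot a) = g\cdot\varphi(a)$, neither of which uses group inverses or group composition in an essential way; the definition of stability has been tailored to allow arbitrary (non-diagonal) $G$-actions on $A\otimes \calk$ precisely so that this picture is available.

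To construct $T$, given $x$ represented by $(\varphi,\bar\varphi)$, I form the $G$-equivariant pullback
\[ D = \{(a,m) \in A \oplus M(B\otimes \calk) \,:\, m - \bar\varphi(a) \in B \otimes \calk\}, \]
which sits in a $G$-equivariant split exact sequence $0 \to B \otimes \calk \to D \to A \to 0$ with one splitting given by $\bar\varphi$ and a second splitting induced by $\varphi$. Split exactness of $F$ turns $F(D)$ into $F(A) \oplus F(B \otimes \calk)$ via the first splitting; the projection of the second splitting onto $F(B\otimes \calk)$ defines a homomorphism $\tau_{\varphi,\bar\varphi} : F(A) \to F(B\otimes \calk)$. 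Composing with the inverse of the stability isomorphism $F(B) \xrightarrow{\sim} F(B \otimes \calk)$ from any $G$-equivariant corner embedding and evaluating at $d$ gives
\[ T_B(x) = \bigl(\text{corner}\bigr)^{-1}\bigl(\tau_{\varphi,\bar\varphi}(d)\bigr). \]
The normalization $T_A(1_A) = d$ is checked on the obvious quasi-homomorphism representing $1_A$.

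The remaining verifications are exactly those of Thomsen's proof: independence of the Cuntz equivalence class representing $x$ (uses homotopy invariance of $F$); independence of the corner embedding chosen (one compares two such via a further stabilization, invoking stability with a possibly non-diagonal action); additivity in $x$ (uses a block quasi-homomorphism and the bilinearity built into split exactness); and naturality in $B$ (follows from functoriality of the pullback construction along $G$-equivariant $*$-homomorphisms). In each step Thomsen manipulates one equivariant element at a time, so the substitution of a group by an inverse semigroup causes no real difficulty. The main obstacle is essentially bookkeeping: walking through Thomsen's argument and confirming at every turn that no hidden use of group inverses or of group (as opposed to semigroup) associativity has slipped in.

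For uniqueness, any natural transformation $T'$ with $T'_A(1_A) = d$ satisfies $T'_B(C(g)) = T'_B(g_*(1_A)) = F(g)(d)$ by naturality applied to an equivariant $*$-homomorphism $g : A \to B$, so $T'$ is forced on the image of $C$. Since every element of $KK^G(A,B)$ is, in the Cuntz picture, the split-exact difference of two such classes inside a stabilization, and $T'$ must respect that split-exact structure once $F$ is split exact, the construction above is the only possibility, which yields $T' = T$.
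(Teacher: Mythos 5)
Your high-level strategy matches the paper's: both adapt Thomsen's proof via the equivariant Cuntz picture, forming a pullback algebra and exploiting split exactness and stability to define and normalize $T$. However, your sketch characterizes the group-to-inverse-semigroup passage as pure ``bookkeeping,'' and in doing so it stops exactly where the paper's proof actually begins. Three non-trivial modifications are needed, none of which surfaces in your write-up. First, the equivariant Cuntz datum is not a bare pair $(\varphi,\bar\varphi)$: one needs the auxiliary cocycles $u_\pm : G \to \calm(B\otimes\calk)$, and for an inverse semigroup these are not unitaries but partial isometries with $u_g^* u_g = u_g u_g^* = \beta_{gg^{-1}}\otimes \mathrm{id}_\calk$ (Lemma~\ref{lemmacocycle}); without this data you cannot even write down the $G$-action on your pullback $D$. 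Second, the cocycle defining that $G$-action on $A_x$ must be changed to $u_g(a,m) = (\alpha_{gg^{-1}}(a),\, {u_-}_g{u_+}_g^*m)$, with the nontrivial idempotent $\alpha_{gg^{-1}}$ in the $A$-slot rather than just $a$ (Modification~\ref{modificationU}); consequently the projection $A_x\to A$ intertwines $u$ not with the trivial cocycle $1$ but with $\nu_g = \alpha_{gg^{-1}}$. Third --- and least foreseeable from the group case --- the scalar unitization $A^+=A\oplus\C$ used by Thomsen to normalize $T_A(1_A)=d$ is not a $G$-algebra for an inverse semigroup; it must be replaced by $A\oplus C^*(E)$ (with $E$ the idempotents of $G$), and the cocycle in the class of $\E^G(A,A^+)$ representing $1_A$ is then $\nu_g=\alpha^+_{gg^{-1}}\otimes\mathrm{id}_\calk$ rather than the constant $1$ (Modification~\ref{lateModification}). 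Your uniqueness argument is fine and is the one in the paper. In short, the route is the same, but the proposal as stated asserts rather than establishes the substance of the adaptation.
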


\begin{theorem}  \label{theoremHigson}
Let $G$ be a countable unital inverse semigroup.
Let $F$ be a homotopy invariant, stable and split exact covariant functor from
${\bf C^*}$ to an additive category {\bf A}.
Then there exists a unique functor $\hat F$ from ${\bf K^G}$ to {\bf A}
such that $F = \hat F \circ \kappa$, where $\kappa:{\bf C^*} \rightarrow {\bf K^G}$
denotes the canonical functor.
\end{theorem}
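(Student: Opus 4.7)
Since the functor $C$ is the identity on objects, $\hat F$ is also forced to act as the identity on objects, and the entire task is to define $\hat F$ on the morphism sets $KK^G(A,B)$ and to verify the functor axioms together with the factorization $F = \hat F \circ C$. The plan is to derive $\hat F$ from a family of natural transformations supplied by Theorem \ref{theoremThomsen}, and then to verify functoriality by a second invocation of the uniqueness clause of the same theorem. For each object $A$ in ${\bf C^*}$, the assignment $B \mapsto {\bf A}(F(A),F(B))$ is a covariant functor from ${\bf C^*}$ to {\bf Ab} that inherits homotopy invariance, stability, and split exactness from $F$, since, as recalled in the excerpt, the corresponding hypotheses on an additive-category valued $F$ are defined exactly in terms of such hom-functors. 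Applying Theorem \ref{theoremThomsen} with $d := \mathrm{id}_{F(A)}$ produces a unique natural transformation $T^A : KK^G(A,-) \Rightarrow {\bf A}(F(A),F(-))$ with $T^A_A(1_A) = \mathrm{id}_{F(A)}$, and I would define $\hat F(\alpha) := T^A_B(\alpha)$ for $\alpha \in KK^G(A,B)$.

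The identity $F = \hat F \circ C$ is then immediate: for $g : A \to B$ in ${\bf C^*}$, naturality of $T^A$ applied to $1_A$ yields $\hat F(C(g)) = T^A_B(g_*(1_A)) = F(g) \circ T^A_A(1_A) = F(g)$, while preservation of identities by $\hat F$ is built into the normalization of $T^A$. The nontrivial step is preservation of composition. Fix $\alpha \in KK^G(A,B)$ and consider the two assignments from $KK^G(B,-)$ to ${\bf A}(F(A),F(-))$ given by $\beta \mapsto T^A_C(\alpha \otimes_B \beta)$ and $\beta \mapsto T^B_C(\beta) \circ T^A_B(\alpha)$. The first is natural in $\beta$ because for $g : C \to C'$ one has $\alpha \otimes_B g_*(\beta) = g_*(\alpha \otimes_B \beta)$ by associativity of the Kasparov product and the definition of $g_*$, after which the naturality of $T^A$ yields the required square; the second is manifestly natural using the naturality of $T^B$ and post-composition with the fixed morphism $T^A_B(\alpha)$. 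Both transformations send $1_B \in KK^G(B,B)$ to $T^A_B(\alpha)$, so the uniqueness clause of Theorem \ref{theoremThomsen} forces them to agree, giving $\hat F(\beta \circ \alpha) = \hat F(\beta) \circ \hat F(\alpha)$ for every $\beta \in KK^G(B,C)$.

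Uniqueness of $\hat F$ follows by the reverse argument: any functor $\hat F'$ with $\hat F' \circ C = F$ determines, for each $A$, a natural transformation $\alpha \mapsto \hat F'(\alpha)$ from $KK^G(A,-)$ to ${\bf A}(F(A),F(-))$. Naturality uses the identity $g_*(\alpha) = C(g) \circ \alpha$ in $KK^G$ together with the fact that $\hat F'$ preserves composition and $\hat F' \circ C = F$, and the normalization $\hat F'(1_A) = \mathrm{id}_{F(A)}$ holds because $\hat F'$ is a functor. The uniqueness in Theorem \ref{theoremThomsen} then forces this transformation to coincide with $T^A$, hence $\hat F' = \hat F$. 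The substantive analytic work has already been absorbed into Theorem \ref{theoremThomsen}, so the only remaining obstacle is bookkeeping; the most delicate point is the compositionality check, where one must identify the correct pair of natural transformations on $KK^G(B,-)$ and verify the naturality of each.
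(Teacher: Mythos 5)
Your proof is correct and is essentially the same argument as the paper's: the paper does not spell it out but instead points to Higson's proof of his Theorem 4.5, which derives the functorial universality from the natural-transformation universality (here Theorem \ref{theoremThomsen}) by exactly the same two-step uniqueness argument you give — define $\hat F(\alpha)=T^A_B(\alpha)$ via the normalized transformation, get $F=\hat F\circ C$ from naturality applied to $1_A$, get multiplicativity by comparing $\beta\mapsto T^A_C(\alpha\otimes_B\beta)$ with $\beta\mapsto T^B_C(\beta)\circ T^A_B(\alpha)$ via the uniqueness clause, and get uniqueness of $\hat F$ by running the same clause backwards.
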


\begin{corollary}    \label{corollarynonunital}
The above results are also valid for countable non-unital inverse semigroups
$G$ and for countable discrete groupoids $G$.
\end{corollary}

%By adjoining a zero or one to $G$ we get slight variations of the above results:
%
%\begin{corollary}
%\begin{itemize}
%\item[(a)]
%By regarding a discrete groupoid as an inverse semigroup (after adjoining a $0$-element),
%the above results are also valid for countable discrete groupoids $G$.
%
%\item[(b)]
%%By adjoining a unit to $G$,
%The above results are also valid for
%non-unital, countable inverse semigroups $G$
%(by adjoining a unit to $G$).
%\end{itemize}
%\end{corollary}

%The
A very brief overview of this note is as follows.
(We give further short summaries at the beginning of each section).

In Section \ref{sectionkktheory} we briefly recall the definitions of
inverse semigroup equivariant $KK$-theory.
In Section \ref{sectionProp} we prove Proposition \ref{propositionFunctor}.
In Section \ref{sectionPhi} we establish a Cuntz picture of $KK$-theory.
%In Section \ref{sectionPsi} we explain 
The core of
how to associate homomorphisms in the image of a
%stable,
%split-exact, homotopy invariant
homotopy invariant, stable and split-exact
functor $F$ to
Kasparov elements
%a Cuntz picture cycle
is explained in Section \ref{sectionPsi}.
Finally Theorem \ref{theoremThomsen}, Theorem \ref{theoremHigson}
and Corollary \ref{corollarynonunital}
are %then
proved in Sections \ref{sectionxi}, \ref{sectionuniversal}
and \ref{sectionnonunital}, respectively.

Sections \ref{sectionProp}-\ref{sectionxi} present a slight adaption
of the content of Thomsen's paper \cite{thomsen}; this goes mostly without saying.
Sections \ref{sectionuniversal} is essentially taken from Higson's paper
\cite{higson}.

%In Section \ref{KKtheory} we recall inverse semigroup equivariant $KK$-theory for convenience %of the reader.
%%and as reference.
%In Section \ref{sectiondifferences} we %describe and summarize
%collect all essential differences
%to Thomsen's paper when replacing a group $G$ by an inverse semigroup $G$.
%In Section \ref{sectionproof} we demonstrate %some
%selected parts of Thomsen's paper
%for an inverse semigroup $G$,
%%which are particularly related to modifications inverse semigroups,
%%
%%most of the important calculations
%% in Thomsen's proof that are different,
%to convince the reader that everything works also for inverse semigroups smoothly.
%%Most parts of Thomsen's paper are however not repeated.

%
%
%We finally remark that this note is not self-contained %, can not be read without Thomsen#s paper,
%%and mainly points out the modifications that have to be made in Thomsen's paper.
%and may be regarded as
%%and might be regarded as
%%In this sense, this note should be
%an appendix to Higson and Thomsen's papers.
%%We merely point out the differences to the group case
%

%
%By embedding a discrete groupoid $\calg$ into the inverse semigroup $\calg \sqcup \{0\}$
%with zero element $0$ (where $gh=0$ if $g$ and $h$ are incomposable in $\calg$), our results include also the discrete groupoid equivariant setting.
%%discrete groupoids $G$.

% if for any object
%$A$ and any object of the form $A \otimes \calk$ (where $\calk$ denotes the
%compacts on some separable Hilbert space) so that the corner embedding
%$\varphi:A \rtimes A \otimes \calk$ is a morphism satisfies
%that $F(\varphi)$ is an ismorphism.

\section{Equivariant $KK$-theory}

\label{sectionkktheory}

Our reference for inverse semigroup equivariant $KK$-theory is \cite{burgiSemimultiKK}.
We shall however exclusively work with its slightly adapted variant called compatible $K$-theory, as in \cite{burgiKKrDiscrete},
%We shall exclusively work with compatible $K$-theory as in \cite{burgiKKrDiscrete},
but denote it by $KK^G$ rather than $\widehat{KK^G}$
as in \cite{burgiKKrDiscrete}. % as in \cite{burgiKKrDiscrete}.
% rather than $\widehat{KK^G}$).
{All (exterior) tensor products are however ordinary as in \cite{burgiSemimultiKK}
and are not forced to be %not
$C_0(X)$-balanced as in
\cite{burgiKKrDiscrete}!}
(The internal tensor products are automatically $C_0(X)$-balanced.)
%We shall exclusively work with compatible $K$-theory as in \cite{burgiKKrDiscrete},
%but denote it by $KK^G$ rather than $\widehat{KK^G}$. % as in \cite{burgiKKrDiscrete}.
%% rather than $\widehat{KK^G}$).
For convenience of the reader we completely recall the basic definitions.

\begin{definition}
{\rm
Let $G$ denote a countable unital inverse semigroup.
%To avoid different
%notations to Thomsen's paper,
The involution
in $G$ is denoted by $g \mapsto g^{-1}$ (determined by $g g^{-1} g = g$
and $g^{-1} g g^{-1}= g^{-1}$).
A semigroup homomorphism is said to be {\em unital} if it preserves the identity $1 \in G$.
To include also semigroups with a zero element, we insist that a 
semigroup homomorphism
preserves also the zero element $0 \in G$ if it exists.
}
\end{definition}

We denote the set of idempotent elements of $G$ by $E$.

%For convience
%
%We shall assume that $G$ has a unit. If not so, we add one.
%Moreover, we may allow $G$ to have a zero element and require
%that this zero element acts always as the zero element on $C^*$-algebras
%and Hilbert modules, and being zero in the universal algebra $C^*(E_G)$.
%In this way we can consider more cases, for example
%discrete groupoids by regarding them as an inverse semigroup with a zero element
%($g h = 0$ in the inverse semigroup if $g$ and $h$ are incomposable in the groupoid).
%For simplicity, we shall not mention zero elements any more for the remainder of this paper.

%We will work with $G$-algebras and $G$-Hilbert modules which are defined like
%their pedants for a group $G$, with the following simple modification:
%The element $g g^*$ no longer acts as the identity on the algebra
%but is instead at least in the center of the multiplier algebra
%of the algebra.

\begin{definition}   \label{defCstar}
{\rm
A $G$-algebra $(A,\alpha)$ is a $\Z/2$-graded $C^*$-algebra $A$ with a
unital semigroup homomorphism
$\alpha: G \rightarrow \mbox{End}(A)$ such that
$\alpha_g$ respects the grading
and $\alpha_{g g^{-1}}(x) y = x \alpha_{g g^{-1}}(y)$
for all $x,y \in A$ and $g \in G$.
%and
%for all $g \in G$.
}
\end{definition}

Throughout we shall identify the $C^*$-algebras $\call_A(A)$
(adjoint-able operators on the Hilbert $A$-module $A$) and $\calm(A)$
(multiplier algebra of $A$) by a well-known $*$-isomorphism.

\begin{definition}   \label{defHilbert}
{\rm
A %$G$-equivariant 
$G$-Hilbert $B$-module $\cale$ is a $\Z/2$-graded Hilbert
module over a $G$-algebra $(B,\beta)$ endowed with a unital
semigroup homomorphism $G \rightarrow \mbox{Lin}(\cale)$ (linear maps on $\cale$)
such that $U_g$ respects the grading and
%$U_{g g^{-1}}$ is a self-adoint projection and
\begin{itemize}
\item[(a)]
$\langle U_g(\xi),U_g(\eta)\rangle = \beta_g(\langle \xi,\eta \rangle)$

\item[(b)]
$U_g(\xi b) = U_g(\xi) \beta_g(b)$

\item[(c)]
$U_{g g^{-1}}(\xi) b = \xi \beta_{g g^{-1}}( b)$

\end{itemize}
for all $g \in G,\xi,\eta \in \cale$ and $b \in B$.
}
\end{definition}

\begin{lemma}     \label{lemmacenterU}
In the last definition, automatically $U_{g g^{-1}}$ is a self-adjoint projection in the center of the algebra
$\call(\cale)$.
\end{lemma}

\begin{proof}
For a positive approximate unit $(b_i) \subseteq B$ we compute
%\begin{eqnarray*}
$$
\langle U_{g g^{-1}} \xi, \eta \rangle
\cong \langle \xi \beta_{g g^{-1}}(b), \eta \rangle
= \beta_{g g^{-1}}(b) \langle \xi, \eta \rangle 
\cong \beta_{g g^{-1}} \langle \xi, \eta \rangle
=  \langle \xi, U_{g g^{-1}} \eta \rangle,
$$
%\end{eqnarray*}
so that $U_{g g^{-1}}$ is seen to be self-adjoint.
This operator is in the center because
%
%It is easy to see with 1 and 3 of Definition \ref{defHilbert} that $U_{g g^{-1}}$
%is self-adjoint and it is in the center because
$$(U_{g g^{-1}} T (\xi)) b =  T (\xi) \beta_{g g^{-1}}(b)
=  T (\xi  \beta_{g g^{-1}}(b))
=  (T U_{g g^{-1}}(\xi)) b$$
for all $T \in \call(\cale), \xi \in \cale$ and $b \in B$.
\end{proof}

\begin{definition}    \label{defLaction}
{\rm
Given a $G$-Hilbert $B$-module $(\cale,U)$ we turn the $C^*$-algebra
$\call_B(\cale)$ to a $G$-algebra under the action $g(T):=U_g T U_{g^{-1}}$
for all $g \in G$ and $T \in \call(\cale)$.
}
\end{definition}

%and the action $G \rightarrow \mbox{End}(\call(\cale))$, $g(T) =
%U_g T U_{g^{-1}}$ makes $\call(\cale)$ to a $G$-algebra ($g \in G$ and $T \in \call(\cale)$).

It is useful to notice that every $G$-algebra $(A,\alpha)$
is a $G$-Hilbert module over itself under the inner product $\langle a,b\rangle
= a^* b$; % and trivial grading;
so we have all the identities of Definition \ref{defHilbert}
for $U: = \beta := \alpha$.
Actually Definitions \ref{defCstar} and \ref{defHilbert} are equivalent for $C^*$-algebras.
%and often it is simpler to verify the latter definition.
%
Hence we note that

\begin{lemma}   \label{lemmamultiae}
Let $(A,\alpha)$ be a $G$-algebra.
Every $\alpha_e \in \call_A(A) = \calm(A)$ for $e \in E$ is a self-adjoint projection
in the center of $\calm(A)$.
The application of $\alpha_e$ is given by multiplication, that is,
$\alpha_e(a) = a \alpha_e$ in $\calm(A)$.
\end{lemma}

\begin{definition}
{\rm
A $*$-homomorphism $\varphi:(A,\alpha) \rightarrow (B,\beta)$ between $G$-algebras %$A,B$
is called {\em $G$-equivariant} if
%it intertwines
%the $G$-action, i.e.
$\varphi(\alpha_g(a))= \beta_g(\varphi(a))$ for all $a \in A, g \in G$.
}
\end{definition}

\begin{definition}
{\rm
A {\em $G$-Hilbert $A,B$-bimodule} over $G$-algebras $A$ and $B$
is a $G$-Hilbert $B$-module $\cale$ equipped with a $G$-equivariant $*$-homomorphism
$\pi:A \rightarrow \call(\cale)$ (the left module multiplication operator).
}
\end{definition}

%It is important to note that the multiplication in a $G$-algebra $(A,\alpha)$ is compatible
%in the sense that $\alpha_{g g^{-1}}(a) b = a \alpha_{g g^{-1}}(b)$
%for all $a,b \in A$ and $g \in G$.
%Similarly, we emphasize that a $G$-equivariant Hilbert bimodules is compatible in the sense %that
%4$\alpha_{g g^{-1}}(a) \xi = a U_{g g^{-1}}(\xi)$ and $U_{g g^{-1}}(\xi) b = \xi \beta_{g %g^{-1}}(a)$
%for all $a \in A, \xi \in \cale, b \in B$ and $g \in G$.

%$\xi \in \cale$.

%Note that when going from groups to inverse semigroups we just replace
%the identity $s s^* =1$ by the weaker requirement that the action of $g g^*$
%is in the center of the multiplier algebra.

\begin{definition}  \label{defCycle}
{\rm
Let $A$ and $B$ be $G$-algebras.
We define a $G$-equivariant Kasparov $A,B$-cycle
to be an ordinary Kasparov cycle
$(\cale,T)$ without $G$-action
(see \cite{kasparov1981,kasparov1988}) such that however
$\cale$ is a $G$-Hilbert $A,B$-bimodule and the operator $T \in \call(\cale)$ satisfies
%, where $\cale$ is a $G$-equivariant $A,B$-Hilbert bimodule, to be an ordinary Kasparov cycle
%$(\cale,T)$ (without $G$-action) (see \cite{kasparov1981,kasparov1988}) satisfying
\begin{equation}    \label{UT}
U_g T U_{g^{-1}} - T U_{g g^{-1}} \in \{S \in \call(\cale)|\, a S, S a \in \calk(\cale) \mbox{ for all } a \in A\}
\end{equation}
for all
$g \in G$. The Kasparov group $KK^G(A,B)$ is defined to be the collection %$\E^G(A,B)$of these cycles
of %the set of
all $G$-equivariant Kasparov $A,B$-cycles divided by homotopy induced by % $\E^G(A,B[0,1])$.
$G$-equivariant Kasparov $A,B[0,1]$-cycles.
(Throughout, $B[0,1]:=B \otimes C([0,1])$.)
}
\end{definition}

%As in Thomsen's paper, we can continuously extend the $G$-action of a $G$-algebra
%$(A,\alpha)$
%in the strict topology to the multiplier algebra of $A$.
%Alternatively, but equally, we may use the
%following definition like in Kasparov
%\cite[\S 1.4]{kasparov1980}.

We equip the multiplier algebra of a $G$-algebra
with a $G$-action as described in Definition \ref{defLaction}.
This is also the continuous extension of the $G$-action on $A$ to $\calm(A)$ in the strict topology. We
redundantly emphasize this again:

\begin{definition}   \label{defstrictlycont}
{\rm
Given a $G$-algebra $(A,\alpha)$, $\call_A(A)= \calm(A)$ becomes a $G$-algebra under the $G$-action $\overline \alpha: G \rightarrow \mbox{End}(\call_A(A))$
given by
$\overline{\alpha_g}(T) := \alpha_g \circ T \circ \alpha_{g^{-1}}$ for all $g \in G$
and $T \in \call_A(A)$.
}
%(confer \cite[$e$ 1.4]{kasparov1980}).
\end{definition}

We also write $\overline \alpha: \calm(A) \rightarrow \calm(B)$ for the strictly
continuous extension of a $*$-homomomorphism $\alpha:A \rightarrow B$ of $C^*$-algebras.

\begin{definition}
{\rm
Write $\calk$ for the compact operators on a separable Hilbert space.
Call a $G$-algebra $(B,\beta)$ {\em stable} if there is a $G$-equivariant $*$-isomorphism
$(B,\beta) \rightarrow (B \otimes \calk, \beta \otimes \mbox{trivial})$.
%$(B,\beta) \cong (B \otimes \calk, \beta \otimes \mbox{triv})$
%(where $1$ denotes the trivial $G$-action on $\calk$)
%by some $G$-equivariant isomorphism.
}
\end{definition}

%Since we can turn every $G$-algebra $B$ to a stable $G$-algebra by stabilization $B \otimes \calk$,

%We use the last definition as a convenienter way than working with the stabilization $B \otimes \calk$.
%
Since every $G$-algebra can be stabilized,
we use the last definition as a convenient way to avoid the cumbersome notation $B \otimes \calk$.  %working
%We use the last definition as a convenient way to avoid cumbersome notation when  %working
%we had to work with the stabilization $B \otimes \calk$.

%Before we shall proceed with the sum of $A,B$-cocycles,
%we shall construct suitable $G$-invariant isometries.

%Thomsen often considers $\overline{\alpha}$-invariant operators,
%for which we shall use the following definition.
% and appended
%existence result.

\begin{definition}
{\rm
Let $(\cale,U)$ be a $G$-Hilbert $A$-module.
An operator $T$ in $\call(\cale)$ is called
%{\em $\overline{\alpha}$-invariant}
{\em $G$-invariant}
if $T$ commutes with %the map
the operator $U_g: \cale \rightarrow \cale$
(that is, $T \circ U_g = U_g \circ T$) 
%\in \mbox{End}(A)$
for all $g \in G$. (Equivalently: $U_g T U_{g^{-1}} = T U_{g g^{-1}}$.)
}
\end{definition}

%\begin{definition}
%{\rm
%Let $(A,\alpha)$ be a $G$-algebra.
%An operator $V$ in $\call_A(A)$ is called
%%{\em $\overline{\alpha}$-invariant}
%{\em $G$-invariant}
%$if $V$ commutes with %the map
%operator $\alpha_g: A \rightarrow A$
%(that is, $V \circ \alpha_g = \alpha_g \circ V$) 
%%\in \mbox{End}(A)$
%for all $g \in G$.
%}
%\end{definition}

Note that then $T^*$ automatically commutes also with $U_g$.

% Indeed,
%we have
%$$\langle x, U_g T^* y \rangle = \langle U_{g g^{-1}} x, U_g T^* y \rangle 
%= \alpha_g \langle U_{g^{-1}} x,  T^* y \rangle
%= \alpha_g \langle U_{g^{-1}} T x,  y \rangle
%= \langle x, T^* U_g y \rangle$$
%by identity 1 of Definition \ref{defHilbert}.

%Note that then $V^*$ automatically commutes also with $\alpha_g$. Indeed,
%we have
%$$\langle x, \alpha_g V^* y \rangle = \langle \alpha_{g g^{-1}} x, \alpha_g V^* y \rangle 
%= \alpha_g \langle \alpha_{g^{-1}} x,  V^* y \rangle
%= \alpha_g \langle \alpha_{g^{-1}} V x,  y \rangle
%= \langle x, V^* \alpha_g y \rangle$$
%by identity 1 of Definition \ref{defHilbert}.

\begin{lemma}   \label{lemmainviso}
Let $(B,\beta)$ be a stable $G$-algebra.
%
%\begin{itemize}
%
%\item[(a)]
Then there exist %$\overline{\beta \otimes id_{\calk}}$-invariant
$G$-invariant isometries $V_1$ and $V_2$ in $\call_B(B)=\calm(B)$
such that $V_1 V_1^* + V_2 V_2^* = 1$.
%
%\item[(b)]
%Given another pair $W_1,W_2 \in \calm(B)$ of isometries with $W_1 W_1^* + W_2 W_2^* = 1$
%there exists a $G$-invariant unitary $U \in \calm(B)$ such that $U V_1 = W_1$ and $U V_2 = W_2$.
%
%Given two such pairs $V_1,V_2,W_1,W_2 \in \calm(B)$ of isometries with $V_1 V_1^* + V_2 V_2^* = W_1 W_1^* + W_2 W_2^* = 1$
%there exists a $G$-invariant unitary $U \in \calm(B)$ such that $U V_1 = W_1$ and $U V_2 = W_2$.
%
%\item[(c)]
%This unitary $U$ is connected to $1 \in \calm(B)$ by a $G$-invariant, strictly continuous path $(U_t)_{t \in [0,1]}$ in $\calm(B)$.
%
%\end{itemize}
%
\end{lemma}

\begin{proof}
%(a)
Let $\calk$ be the compact operators on $\ell^2(\N)$.
%Write $\theta_{\xi,\eta}(\cdot)= \xi \langle \eta,\cdot \rangle$ for the standard compact operators ($\xi,\eta \in \ell^2(\N)$).
Write $e_{i,j} \in \calk$ for the standard matrix units.
We define the operators $V_1,V_2 \in \call_{B \otimes \calk} (B \otimes \calk)$ for example by
$$V_k(b \otimes e_{i,j}) = b \otimes e_{2 i+k-2,j}$$
%$$V_i(b \otimes \theta_{e_n,e_m}) = b \otimes \theta_{e_{2n+i-2},e_m}$$
for all $i,j \in \N$, $b \in B$ and $k=1,2$.
%, which we extend as an bounded linear operator to the Banach space $B \otimes \calk$.
%
%(b) Set $U= W_1 V_1^* + W_2 V_2^*$.
%
\end{proof}

\begin{lemma}    \label{lemmaUconnected}
Let $(B,\beta)$ be stable.
A $G$-invariant unitary $U \in \calm(B)$ can be connected to $1 \in \calm(B)$
by a $G$-invariant, strictly continuous unitary path in $\calm(B)$.
\end{lemma}

\begin{proof}
In the non-equivariant case this is for example \cite[Lemma 1.3.7]{jensenthomsen}.  %, whose
Its canonical proof works equivariantly without modification.
\end{proof}

%\begin{lemma}   \label{lemmahomotopy}
%This unitary $U$ is connected to $1 \in \calm(B)$ by a $G$-invariant, strictly continuous path $(U_t)_{t \in [0,1]}$ in $\calm(B)$.
%\end{lemma}

Let us point out that we have all the necessary techniques for inverse semigroup
equivariant $KK$-theory that we shall need available: $KK^G$ allows an associative Kasparov product, and
$KK^G(A,B)$ is functorial in $A$ and $B$ (see \cite{burgiSemimultiKK}). %,burgiKKrDiscrete}).

{From now on all $C^*$-algebras are assumed to be trivially graded and separable!}
%Note that, as in Higson and Thomsen, from now on all $C^*$-algebras are assumed to be trivially %graded and separable.

%Summarizing, notice how close the definitions in this section are to the one for a group %$G$.

%Notice how close the definitions are to the one for a group $G$.
%The actions are not realized by isomorphisms but only by endomorphisms,
%but the idempotent elements of $G$ go always to selfadjoint projections
%in the center of the correspnding algebra, and they are all compatible
%with respect to the modul multiplications; also the definition of
%$G$-equivariant $KK$-theory is very close to the one for groups $G$.

%
%
% and the equivariance
%of $T$ in the definition of $KK^G$ differs only be the appearence
%of $U_{g g^*}$.

\section{The unitization of a $G$-algebra}

We shall later need a unitization of a $G$-algebra.
To this end we cannot simply add a single unit but need to adjoin
the whole $G$-algebra $C^*(E)$
to %enrich
a given $G$-algebra. %with
This section is dedicated to describe this.

\begin{definition}
{\rm
Let $C^*(E)$ denote the universal {\em abelian} $C^*$-algebra generated by the
free set $E$ of commuting self-adjoint projections.
This algebra is endowed with the $G$-action $\tau$ induced by $\tau_g(e):= g e g^{-1} \in E$
for $g \in G, e \in E$
%E \mapsto E$,
%($g \in G,e \in E$)
%($E \rightarrow E$),
which turns it to a $G$-algebra.
} 
\end{definition}

\begin{lemma}   \label{lemmaunitization}
Let $(A,\alpha)$ be a $G$-algebra.
Given $a \in A$ and $z \in C^*(E)$ write
$$a z := z a := \gamma_z(a),$$
where $\gamma:C^*(E) \rightarrow \calm(A)$ denotes the canonical $*$-homomorphism such that $\gamma_e(a)= \alpha_e(a)$
for all $e \in E, a \in A$.

Then the linear direct sum $A \oplus C^*(E)$ %is
turns to a $G$-algebra under the operations %multiplication
\begin{eqnarray*}
(a \oplus z)^* &:=& a^* \oplus z^*,  \\  %\nonumber  \\
(a \oplus z)\cdot(b \oplus w) &:=& ab + z b + a w \oplus z w   % \label{defmult}
\end{eqnarray*}
%with
for all $a,b \in A, z,w \in C^*(E)$
%$e,f$ idempotent elements in $G$
%(where $f a := \alpha_f(a)$)
and under the diagonal $G$-action
$\alpha \oplus \tau$.
%_g (a \oplus e) = \alpha_g(a) \oplus \beta_g(e)$,
%where $\mu$ denotes the $G$-action on $C^*(E)$
%given by $\mu_g(e) = g e g^*$ for $e \in E$ (this is the
%$G$-action on $\C \rtimes E \cong C^*(E)$ from \cite{khoshkamskandalis2004}).
%
% (where $\beta_g(e)= g e g^*$ the $G$-action on $\C \rtimes E = C^*(E)$).
\end{lemma}

\begin{proof}
In this proof, $\oplus$ indicates only a linear sum, and $\oplus^{(C^*)}$ a $C^*$-direct sum.
%Of course, $A \oplus C^*(E)$ would be the norm closure of the union of the %subalgebras $A \oplus D_F$ indexed by the finite subsets
%$F$ of $E$, where $D_F$ denotes the subalgebra of $C^*(E)$
%generated by $F$, if every $A \oplus D_F$ was a $C^*$-algebra.
%It is thus sufficient to supply a $C^*$-norm on every $A \oplus D_F \cong A \oplus %\C^N$ ($F \cong \C^N$ by Gelfand's representation).
%By induction assume that we have already a $C^*$-norm on the algebra $B:= A \oplus %\C^k$ ($0 \le k < N$).
%
%
%Let
Put $Z:= \mbox{span}(E) \subseteq C^*(E)$. Of course, $Z$ is a dense $*$-subalgebra
of $C^*(E)$.
%Let $Z$ denote the dense algebra of $C^*(E)$ consisting of all
%linear combinations of elements of $E$.
%
%Since $E$ is a linear base of $Z$,
%we can linearly extend $\alpha: E \rightarrow \calm(A): e \mapsto \alpha_e$ to a $*$-homomorphism $\alpha:Z \rightarrow
%\calm(A)$ (also denoted by $\alpha$).
%Use this new $\alpha$ to define a multiplication in $A \oplus Z$ by the same formula
%as in (\ref{defmult}) for $e,f \in Z$.
%(For example, $\alpha_{\lambda e + f}(a)= \lambda \alpha_e(a) + \alpha_f(a)$.)
%
%
%In this sense we can extend the above defined multiplication from $A \oplus E$
%to $A \oplus Z$.

%We are going to show
We leave it to the reader to show that $A \oplus C^*(E)$ is a $*$-algebra.
We claim
that $A \oplus Z \subseteq A \oplus C^*(E)$ can be equipped with a $C^*$-norm.
Given a finite subset $F \subseteq E$, write $Z_F \subseteq Z \subseteq C^*(E)$
for the finite-dimensional $*$-subalgebra of $C^*(E)$ generated by $F$. % in $C^*(E)$.
It is sufficient %wo equip each
to define a %$A \oplus D_F$ with a $C^*$-norm.
$C^*$-norm for each single $A \oplus Z_F$,
since $A \oplus Z$ is the directed union over all such $*$-algebras $A \oplus Z_F$,
and each direct sum $A \oplus Z_F$ must then be topologically closed as $Z_F$ is finite-dimensional.
%,
%and thus be a $C^*$-algebra.

Since $Z_F$ is a finite-dimensional commutative $C^*$-algebra,
there is an isomorphism $\psi: \C^n \rightarrow Z_F$ of $C^*$-algebras.
Assume by induction hypothesis for $0 \le k < n$ that $A \oplus \psi(\C^k)
\subseteq A \oplus C^*(E)$ is a $C^*$-algebra.
%Set $p:= 0\oplus 1_\C \oplus \cdots \oplus 1_\C \in A \oplus \C^k$.
%Define $P := 1_{\calm(A)} \in \calm(A \oplus \C^k)$.
%
%If $P$
%
%Let $B \cong A \oplus \C^k$ $*$-isomorphically, where the $C^*$-alegbra $B \subseteq A \oplus Z_F$ is regarded in the original form.
%We want to show that $B \oplus \C \cong A \oplus \C^{k+1}$ is $C^*$-algebra.
Let $z = \psi(e_{k+1})$, where $e_{k+1}=0 \oplus \cdots \oplus 0 \oplus 1_{\C} \in \C^{k+1}$.
%Let $z \in Z_F$ % \subseteq \calm(A)$
%be the element that corresponds to the element $0 \oplus \cdots \oplus 0 \oplus 1_\C \in A %\oplus \C^{k} \oplus \C$.
Multiplication in $A \oplus \psi(\C^{k+1})$
% \subseteq A \oplus C^*(E)$
is as follows:
$$(x + \lambda z) (y + \mu z) = x y + \lambda z y + \mu x z + \lambda \mu z$$
for $x,y \in A \oplus \psi(\C^k) \subseteq A \oplus \psi(\C^{k+1})$ and $\lambda,\mu \in \C$.
%If $A$ is unital then
If $\gamma_z \in A$ then
$$\varphi: A \oplus \psi(\C^{k+1}) \rightarrow (A \oplus \psi(\C^{k}) )
\oplus^{(C^*)} \C:
\varphi(x + \lambda z) = x + \lambda \gamma_z \oplus \lambda$$
defines a $*$-isomorphism ($x \in A \oplus \psi(\C^k), \lambda \in \C$),  %, z \in Z_F$),
whence $A \oplus \psi(\C^{k+1})$ is a $C^*$-algebra.

%If $A$ is non-unital
If $\gamma_z \notin A$ then consider the operator $P \in \calm(A \oplus \psi(\C^k))$
%onto $A$.
defined by $P(x)=x z$.
% ($x \in A \oplus\C^k$).
Observe that $P \notin A \oplus \C^k$ (as otherwise $P$ and so $\gamma_z$ would be in $A$,
since $\psi(\C^k) z = 0$).
% (as otherwise $P$ would be $z$).
The injective $*$-homomorphism
$$\varphi:A \oplus \psi(\C^{k+1}) \rightarrow \calm(A \oplus \psi(\C^k)):
\varphi(x + \lambda z) = x + \lambda P$$
($x \in A \oplus \psi(\C^k), \lambda \in \C$)
shows that $A \oplus \psi(\C^{k+1})$ is a $C^*$-algebra.
This completes induction. Thus $A \oplus \psi(\C^n) \cong A \oplus Z_F$ is a $C^*$-algebra.

Note that the canonical projection $A \oplus Z_F \rightarrow Z_F \subseteq C^*(E)$ is a %contractive 
$*$-homomorphism of $C^*$-algebras and thus contractive.
%of $C^*$-algebras
%and %so
Thus by taking the direct limit we obtain a canonical contractive projection $\overline{A \oplus Z} \rightarrow C^*(E)$.
Since $A \oplus C^*(E) \subseteq \overline{A \oplus Z}$, we have a contractive projection $A \oplus C^*(E) \rightarrow C^*(E)$. 
By a standard result in the theory of topological vector spaces,  $A \oplus C^*(E)$ is complete. %closed.
Thus $A \oplus C^*(E) = \overline{A \oplus Z}$ is a $C^*$-algebra.
%
%Consequently, we get a canonical $*$-isomorphism $\overline{A \oplus Z} \rightarrow A \oplus C^*(E)$.

Finally, a straightforward check %of the axioms of Definition \ref{defHilbert}
shows that $A \oplus C^*(E)$ is a $G$-algebra.
%
%Consequently, $A \otimes C^*(E)$ embeds in $\overline{A \otimes Z}$ and the image is %closed, whence we conclude that $\overline{A \otimes Z} \cong A \otimes C^*(E)$.
%
\end{proof}

\begin{definition}   \label{defunitization}
{\rm
For a $G$-algebra $(A,\alpha)$ we define its {\em unitization} %$G$-algebra
to be the $G$-algebra
$(A^+,\alpha^+) := (A \oplus C^*(E), \alpha \oplus \tau)$
as described in Lemma \ref{lemmaunitization}.
}
\end{definition}

%The construction of this section works of course also unmodified for non-unital $G$, but
%then $A^+$ obviously fails to be unital.
%
Because %we require a unit in $G$,
$G$ has a unit, $A^+$ is %obviously
unital.
%
%, which would have been the goal of the construction.
%This is the only place in the proof where we need
Actually, this %unitization %of this section
is the only %reason why we require a unit in $G$. 
reason and place where we need a unit in $G$.
%This is the only reason why we require $G$
%to be unital.
%This is why we need a unit in $G$
%at all. But to define $KK^G$-theory for unital $G$ is often useful anyway.

\section{The split-exactness, stability and homotopy invariance of $KK^G$}

\label{sectionProp}

%\section{$G$-equivariant $KK$-theory}

%\label{KKtheory}

The aim of this section is the proof of Proposition \ref{propositionFunctor}.
%This section is aimed to the proof of Propostion \ref{propositionFunctor}.

\begin{lemma}   \label{lemmaexa}
Let $D$ be a $G$-algebra and
%$g:B \rightarrow C$ be a surjection of $G$-alegrbas.
$$\xymatrix{0 \ar[r] & A  \ar[r]^j & B \ar[r]^f & C \ar[r] & 0}$$
an exact sequence of $G$-algebras.
If $[\varphi,\cale,T] \in KK^G(D,B)$ such that $f_*(\varphi,\cale,T)$ is a degenerate Kasparov cycle in $KK^G(D,C)$, then
it is of the form
$$[\varphi, \cale,T] = j_*[\varphi',\cale',T'],$$
%][\varphi(\cdot)|_{\cale_2}\cale_2,T|_{\cale_2}],$$
where
$[\varphi', \cale' ,T'] \in KK^G(D,A)$ with $\cale' = \{ \xi \in \cale|\, \langle \xi, \xi \rangle \in j(A)\} \subseteq \cale$,
$\varphi' = \varphi(\cdot)|_{\cale'}$
and $T'= T|_{\cale'}$.
%
%$ \in KK^G(D,B)$

\end{lemma}

\begin{proof}
The canonical proof of \cite[Lemma 3.2]{skandalis} works verbatim also $G$-equivariantly.
\end{proof}

\begin{definition}
{\rm
We recall that
$F(-)= KK^G(A,-)$  denotes the functor $F :{\bf C^*} \rightarrow {\bf Ab}$ with $F(B) = KK^G(A,B)$
and $F(f) = f_*(z) = z \otimes_B f_*(1_B)$ for $f \in {\bf C^*}(B,C)$ and $z \in KK^G(A,B)$.
}
\end{definition}

\begin{lemma}    \label{lemmaKstable}
The functor $F$ given by $F(B) = KK^G(A,B)$ from $ {\bf C^*}$ to the abelian groups is stable.
That is, for any $G$-algebra $(B \otimes \calk,\gamma)$
and any minimal projection $e \in \calk$ 
such that the associated corner embedding
$\varphi: B \rightarrow B \otimes \calk$ with
$\varphi(b) = b \otimes e$ happens to be a $G$-equivariant $*$-homomorphism,
the map $F(\varphi)$ is invertible.
\end{lemma}

\begin{proof}
%It is weel known that
Notice that $F(\varphi)= (\cdot) \otimes_B [\varphi]$, where
%We may cut down the underlying Hilbert module of the cycle of %$\varphi \equiv
$$[\varphi]:= \varphi_*(1_B) = [\mbox{id}_B ,B \otimes_\varphi B \otimes \calk,0]
= [\mbox{id}_B, B \otimes e \calk,0]  \in KK^G(B , B\otimes \calk),$$
where the $G$-action on $B \otimes e \calk$ is given by $\gamma$.
%and represent it as %$\varphi \equiv
%We may cut down the underlying Hilbert module of %the cycle %of %$\varphi \equiv
%this cycle to get %present it as
%$[\varphi] = [\varphi,B \otimes e \calk,0]$.
%Since for all $b \in B$ and $k \in \calk$ we have $\gamma_g(b^2  \otimes e k) = \gamma_g(b \otimes e) \gamma_g(b \otimes k) =
%(\beta_g(b) \otimes e) (b \otimes k) \in B \otimes e \calk$,
%%= \varphi(\beta_g(b)) g(b \otimes k) \in B \otimes e \calk$ ,
%% = (b \otimes e) (b \otimes e)$
%%note
%we see that $B \otimes e \calk$ % = (1 \otimes e) (B \times \calk)$
%%(computed in the multipliar algebra)
%is indeed a $G$-invariant Hilbert module under the $C^*$-action $\gamma$.
We propose an inverse element for $[\varphi]$ by
$$z := [m,B \otimes \calk e,0] \in KK^G(B \otimes \calk,B),$$
where $m$ is the multiplication operator, the $G$-action on $B \otimes \calk e$ is
given by $\gamma$,
% defined to be the restriction
%of the $G$-$C^*$-action on $B \otimes \calk$,
and the $B$-valued inner product on $B \otimes \calk e$ is
%given
defined by $\langle x,y \rangle = \varphi^{-1}(x^*y)$.
%We need to show that
%Since $F(\varphi)= \cdot \otimes_B [\varphi]$,
We are done when showing that
$[\varphi] \otimes_{B \otimes \calk} z = 1_B$ and $z \otimes_B [\varphi] = 1_{B \otimes \calk}$ in $KK^G$.
We have
$$[\varphi] \otimes_{B \otimes \calk} z = [\mbox{id}_B, (B \otimes e \calk) \otimes_m (B \otimes \calk e) ,0]
= [\mbox{id}_B,B,0] = 1_B$$
by the $G$-Hilbert $B,B$-module isomorphism determined by
$$b_1 \otimes e k_1 \otimes b_2 \otimes k_2 e \mapsto \varphi^{-1}(b_1 b_2 \otimes e k_1 k_2 e)$$ for all $b_i \in B, k_i \in \calk$.
Similarly, $z \otimes_B [\varphi] = 1_{B \otimes \calk}$ is computed by
the $G$-Hilbert $B \otimes \calk, B \otimes \calk$-module isomorphism
given by
$$b_1 \otimes k_1 e \otimes b_2 \otimes e k_2 \mapsto b_1 b_2 \otimes k_1 e k_2.$$
\end{proof}

\begin{lemma}      \label{lemmaKsplitexact}
The functor $F$ given by $F(B) = KK^G(D,B)$ from ${\bf C^*}$ to the abelian groups is split-exact. That is, given a split exact sequence
\begin{displaymath}   %\label{splitexact}
\xymatrix{0 \ar[r] & A \ar[r]^{j} & B \ar@<.5ex>[r]^{f} &
C \ar[r] \ar@<.5ex>[l]^{s} & 0  }
\end{displaymath}
%its sequence in the
its image under $F$ is canonically split-exact.
%the associated sequence $F(\cals)$ in the image of $F$ is canonically split-exact.
%0 \rightarrow A \rightarrow B \rightarrow C 
\end{lemma}

\begin{proof}
%Denote the $G$-actions on $A,B,C$ by $\alpha,\beta,\gamma$, respectively.
Let $\pi: B \rightarrow \calm(A)= \call_A(A)$ be the %well-known
standard $*$-homomorphism
$\pi(b)(a) = j^{-1}(b j(a))$
associated to the exact sequence
and notice that it is $G$-equivariant.
Consider the $G$-Hilbert $A$-module $A \oplus A$ with grading $\epsilon(x,y)=(x,-y)$.
We have an element $\{j\}^{-1}:=[\varphi,A \oplus A,F] \in KK^G(B,A)$, where
$F$ is the flip automorphism and $\varphi:B \rightarrow \call_A(A \oplus A)$ is given by
$$\varphi(b) (x,y) = (\pi(b)x, (\pi \circ s \circ  f ) (b) y).$$
%Observe that $[j] \otimes_B j^{-1} = 1_A$.
%$$[f] \otimes_B [(\varphi,A \oplus A,F)] = 1_A.$$
%Analogously

Similarly, there is an element
$\{s \circ f \}^\bot:=[\psi,B \oplus B,F'] \in KK^G(B,B)$, where $F'$ is the flip automorphism and
$\psi: B \rightarrow \call_B(B \otimes B)$ is determined by
$$\psi(b) (x,y) = ( b x , (s \circ f ) (b) y).$$ 
It was checked in \cite{kasparov1988}
%[K3]--- 
%Observe
that $\{ s \circ f \}^\bot = 1_B - (s \circ f)_*(1_B)$.
%
%Lemma 3.1-- 
Lemma \ref{lemmaexa}
shows that $\{ s \circ f \}^\bot = j_* ( \{j\}^{-1})$.
% = j^{-1} \otimes_A [j]$.

Let $D$ be another $G$-algebra, and $x \in KK^G(D,B)$ be in the kernel of $f_*$.
Then
\begin{eqnarray*}
x & =&  x - (s \circ f)_*(x) = x \otimes_B (1_B - (s \circ f)_*(1_B)) \\
&=& x \otimes_B j_* ( \{j \}^{-1}) = j_* ( x \otimes_B \{j \}^{-1}),
\end{eqnarray*}
%\in \mbox{(j_*)$$
which is in the image of $j_*$.
Thus
the sequence
\begin{displaymath}  %  \label{splitexact}
\xymatrix{0 \ar[r] & KK^G(D,A) \ar[r]^{j_*} & KK^G(D,B) \ar@<.5ex>[r]^{f_*} &
KK^G(D,C) \ar[r] \ar@<.5ex>[l]^{s_*} & 0  }
\end{displaymath}
is split exact.
\end{proof}

By Lemmas \ref{lemmaKstable} and \ref{lemmaKsplitexact}
and the %standard proof
evidence of homotopy invariance we obtain the %first
main result of this section:

\begin{corollary}    \label{corollaryProp}
Proposition \ref{propositionFunctor}
is true.
\end{corollary}

%\begin{proof}[proof]
%Let $A$ and $A \otimes \calk$ be $G$-algebras, $e \in \calk$ a minimal
%projection and $\varphi: A \rightarrow A \otimes \calk$
%given by $\varphi(a) = a \otimes e$ its associated corner embedding.
%Then thr canonical element $[\varphi] \in KK^G(A , A \otimes)$
%may be represented by the Kasparov cycle $(\varphi, A \otimes e \calk, 0)$.
%The $G$-action on the trivially graded $A \otimes e \calk$ is given by restriction.
%Give $A \otimes \calk e$ the structure of a trivially graded
%Hilbert $A$-module with $A$-valued inner prodcut
%given by $\langle x,y\rangle = \varphi^{-1}(x^* y)$. 
%Again, the $G$-action is given by restriction.
%
%\end{proof}

\section{Cocycles}

%\section{The differences to Thomsen's paper}

%\label{sectiondifferences}

\if 0

In this section we list all essential modifications
which are necessary when adapting Thomsen's proof from a given group
$G$ to a countable unital inverse semigroup $G$.
% (Only when explicit expressions
%in Thomsen's paper have to be altered we speak of a modification.)
The topology of $G$ is assumed to be discrete and can be ignored.
We may copy Thomsen's proof literally, and only need
to take care that the involved $G$-structures for a group $G$ remain
$G$-structures for an inverse semigroup $G$ as well.
%
% like $G$-equivariant maps, actions, cocycles and Kasparov cycles
%remain $G$-equivariant also when $G$ is an inverse semigroup.
Throughout we shall keep the notations from Thomsen's paper. We go directly into
his paper and recall practically nothing. Instead, we assume the reader to read this note parallel to Thomsen's paper.
%Parts of Thomsen's paper which are not discussed here are understood to need only little and straightforward adaption.
%In this sense, this note should be regarded as an appendix to Thomsen's paper.

Nevertheless, recall that $\calk$ denotes the space of compact operators on a separable
Hilbert space.
As in Thomsen's paper, we will identify the multiplier algebra $\calm(A)$
with $\call_A(A)$ when $A$ is a $C^*$-algebra.
This is particularly often used when $A$ is of the form $B \otimes \calk$
for some $C^*$-algebra $B$; so one identifies
$\calm(B \otimes \calk)$ with $\call_{B \otimes \calk}(B \otimes \calk)$.

%In this section we discuss that aspects of Thomsen's paper
%that have to be modified when considering an inverse semigroup
%$G$ rather than an group $G$.
%In the next section we shall give some proofs like
%verifiying

%We discuss here the modifications that have to be made
%in the definitions and in the proof in Thomson's paper when assuming $G$ to be an inverse
%semigroup.
%Throughout we use and keep the notation from Thomsen's paper \cite{thomsen}.
%Moreover, we will not recall its content but only record the essential
%differences when adapting Thomsen's paper to the inverse semigroup equivariant
%setting; in this sense, this note
%is actually an appendix to paper \cite{Thomsen}.
%
%and we will not recall the definitions of Thomsen's paper, nor any other content
%of it, but only record the essential differences
%
%Let $\calk$ denote the compact operators on some separable Hilbert space.
%As in Thomsen's paper, we will identify the multiplier algebra $\calm(A)$
%with $\call_A(A)$ when $A$ is a $C^*$-algebra, particularly when $A = B \otimes \calk$
%(so we use $\calm(B \otimes \calk) \cong \call_{B \otimes \calk}(B \otimes \calk)$).

%
As in Thomsen's paper, we can continuously extend the $G$-action of a $G$-algebra
$(A,\alpha)$
in the strict topology to the multiplier algebra of $A$.
Alternatively, but equally, we may use the
following definition like in Kasparov
\cite[\S 1.4]{kasparov1980}.

\begin{definition}   \label{defstrictlycont}
{\rm
Given a $G$-algebra $(A,\alpha)$, $\call_A(A)= \calm(A)$ becomes a $G$-algebra under the $G$-action $\overline \alpha: G \rightarrow \mbox{End}(\call_A(A))$
given by
$\overline{\alpha_g}(T) := \alpha_g \circ T \circ \alpha_{g^{-1}}$ for all $g \in G$
and $T \in \call_A(A)$.
}
%(confer \cite[$e$ 1.4]{kasparov1980}).
\end{definition}

\fi

When we shall later introduce a Cuntz picture of Kasparov theory,
the corresponding transformation produces
a $G$-action $S$ on a $G$-Hilbert $A$-module $A$, which
will be - synthetically - written as $S_g = u_g \circ \alpha_g$,
where $\alpha$ denotes the $C^*$-action on $A$.
This $u_g = S_g \circ \alpha_{g^{-1}}$ will be defined next:
%
%some 
%information %sort of
%%ballast %(as compared to the non-equivariant case) which comes from the $G$-action.
%which does not show up in the non-equivariant case
%and which comes from the $G$-action.
%of a $C^*$-algebra. % $\alpha$.
%This %ballast
%information is defined next:

%The definition of a unitary cocycle in Thomsen's paper
%has to be adapted as follows. (We shall drop the word ``unitary".)

\begin{definition}
{\rm
Let $(A,\alpha)$ be a $G$-algebra. An %{\em partially isometrical $\alpha$-cocycle}
{\em $\alpha$-cocycle}
is a map $u:G \rightarrow \calm(A)$
such that the identities
\begin{equation} \label{defcocycle}
\alpha_{g g^{-1}}  = u_g^* u_g, \quad	%\qquad (\mbox{in } \calm(A)),\\
%\alpha_{g g^{-1}} = u_{g g^{-1}}, \quad	%\qquad (\mbox{in } \calm(A)),\\
%\overline{\alpha_g}(u_{g^{-1}}) = u_g^*
u_{g g^{-1}} = u_{g} u_{g}^*,
\quad
%\quad \mbox{and} \quad
	% \qquad\mbox{ and } \qquad
%\alpha_{g g^{-1}}  = u_{g}^* u_{g}, \quad
	u_{g h} = u_g \overline{\alpha_g}(u_h)
\end{equation}
hold in $\calm(A)$ for all $g$ and $h$ in $G$.
}
\end{definition}

%In Thomsen's paper the first two identities of (\ref{defcocycle})
%are equal to the identity operator, making $u_g$ unitary operators.
%Here they are just partial isometries (because $\alpha_{g g^{-1}}$
%is a projection by Definition \ref{defCstar}),
%% with the same
%%range and source projection
%and we have the following lemma.

\begin{lemma}		\label{lemmacocycle}
Let $u$ be an $\alpha$-cocycle.
\begin{itemize}

\item[(a)]
Then we have
$$\alpha_{g g^{-1}} = u_g^* u_g = u_g u_g^* = u_{g g^{-1}} \in \calm(A).$$
In particular,
every $u_g$ is a partial isometry and
the source and range projection of $u_g$ both agree with $\alpha_{g g^{-1}}$ and are in the center
of $\calm(A)$.

\item[(b)] In particular, every $u_{e} = \alpha_e$ is a self-adjoint projection in the center of $\calm(A)$
for all $e \in E$.

\item[(c)]
We may replace the second identity of (\ref{defcocycle}) by the identity
$$\overline{\alpha_g}(u_{g^{-1}}) = u_g^*$$
without changing the definition of a cocycle.
\end{itemize}

\end{lemma}

\begin{proof}
Note that $\alpha_{g g^{-1}}$ is a projection of the center of $\calm(A)$. Hence, $u_g$ is a partial isometry by the first identity of
(\ref{defcocycle}).
Using only the identities (\ref{defcocycle}), we have
$$\overline{\alpha_g}(u_{g^{-1}}) = u_g^* u_g \overline{\alpha_g}(u_{g^{-1}}) = u_g^* u_{g g^{-1}}
= u_g^* u_{g} u_{g}^* = u_g^*,$$
which checks %the last identity of
Lemma
\ref{lemmacocycle}.(c).
The second identity of (\ref{defcocycle}) is on the other hand easily obtained from this new identity.
The identity $u_g^* u_g = u_g u_g^*$ follows now from
the first identity of (\ref{defcocycle}) and the identity of Lemma
\ref{lemmacocycle}.(c) through
%By the first identity of 
\begin{eqnarray*}
u_g^* u_g &=& \alpha_{g} \alpha_{g^{-1} g } \alpha_{g^{-1}} = \alpha_g u_{g^{-1}}^* u_{g^{-1}} \alpha_{g^{-1}}
= \alpha_g \overline{\alpha_{g^{-1}}}(u_{g}) u_{g^{-1}} \alpha_{g^{-1}} \\
&=& \alpha_g \circ \alpha_{g^{-1}} \circ u_{g} \circ \alpha_g \circ  u_{g^{-1}} \circ \alpha_{g^{-1}}
= u_g \overline \alpha_g(u_{g^{-1}}) 
=  u_g u_g^* .
\end{eqnarray*}
%$u_g^* u_g = \alpha_{g} \alpha_{g^{-1} g } \alpha_{g^{-1}} = \alpha_g u_{g^{-1}}^* u_g \alpha_{g^{-1}}
%= \alpha_g \overline{\alpha_{g^{-1}}}(u_{g}) u_g \alpha_{g^{-1}} = u_g u_g^*$.
%
% The third identity is obvious from (\ref{defcocycle}).
%The second claim follows from $\overline{\alpha_g}(u_{g^{-1}}) = u_g^* u_g %\overline{\alpha_g}(u_{g^{-1}}) = u_g^* u_{g g^{-1}}
%= u_g^* u_{g} u_{g}^* = u_g^*$.
\end{proof}

\begin{definition}
{\rm
Given an $\alpha$-cocycle $u$ we write $u \alpha u^*$ for the $G$-action
$(u \alpha u^*)_g(a) = u_g \alpha_g(a) u_g^*$ on $A$. 
}
\end{definition}

\begin{definition}
{\rm
For an $\alpha$-cocycle $u$ we introduce a $G$-action $\delta^u$ on
$M_2(A)$ under the formula
$$\delta^u_g \left ( \begin{matrix} a & b \\ c & d \end{matrix} \right )
= \left ( \begin{matrix} \alpha_g(a) & \alpha_g(b) u_g^* \\ 
u_g \alpha_g(c) & u_g \alpha_g(d) u_g^* \end{matrix} \right ) . $$
%
%$(u \alpha u^*)_g(a) = u_g \alpha_g(a) u_g^*$ on $A$. 
}
\end{definition}

Notice that $\alpha_{e}(a) u_{e}^* = \alpha_{e}(a) \alpha_{e}
%= \alpha_e \alpha_{e}(a) \alpha_e^* = \alpha_e(\alpha_e(a))
= \alpha_{e}(a)$ for every $e \in E$ by Lemmas \ref{lemmacocycle}
and \ref{lemmamultiae}, such that
$\delta^u_e = \mbox{id}_{M_2} \otimes \alpha_e$.
With that and Lemma \ref{lemmacocycle} it is straighforward to check that
$\delta^u$ is indeed a $G$-action.

\section{The isomorphism $u_\#$}

In this section we shall see that the objects $(A,\alpha)$ and
$(A, u \alpha u^*)$ are isomorphic under a stable functor, where $u$ denotes an $\alpha$-cocycle.

\begin{definition}
{\rm
Consider the two corner embeddings $S_A(a) = \left ( \begin{matrix} a & 0 \\ 0 & 0 \end{matrix} \right )$ and $T_A(a)=\left ( \begin{matrix} 0 & 0 \\ 0 & a \end{matrix} \right )$
which define $G$-equivariant $*$-homomorphisms
$S_A : (A,\alpha) \rightarrow (M_2(A), \delta^u)$
and
$T_A : (A,u \alpha u^*) \rightarrow (M_2(A), \delta^u)$.
}
\end{definition}

\begin{definition}
{\rm
%If $F:{\bf C^*} \rightarrow {\bf Ab}$ is a stable functor then
Let $F$ be a stable functor from ${\bf C^*}$ to the abelian groups.
Then define an isomorphism
$$u_\# := F(T_A)^{-1} \circ F(S_A) : F(A,\alpha) \rightarrow F(A, u \alpha u^*).$$
}
\end{definition}

That is, under a stable functor `the actions $\alpha$ and $u \alpha u^*$
are isomorphic' and we can switch between them via $u_\#$ as we like.

\begin{lemma}
Consider the stable functor $F$ from ${\bf C^*}$ to the abelian groups defined by
$F(A)= KK^G(D,A)$. Then the map $u_\#$ from the last definition and its inverse map $u_\#^{-1}$
can be realized by multiplication with the following Kasparov elements:
\begin{eqnarray*}
z &:= &(\mbox{id}_A, (A,\alpha u^*), 0) \in KK^G \big((A,\alpha), (A,u \alpha u^*)\big ),\\
z^{-1} &:=& (\mbox{id}_A, (A,u \alpha), 0) \in KK^G \big((A,u \alpha u^*), (A,\alpha) \big ),
\end{eqnarray*}
respectively, where the occurring Hilbert $A$-modules are trivially graded
and $(\alpha u)_g(a) = \alpha_g(a) u_g$ and $(\alpha u^*)_g(a) = \alpha_g(a) u_g^*$
denote their $G$-actions, respectively.

\end{lemma}

\begin{proof}
%We need to show that
Since $u_\#^{-1} = (S_A)_*^{-1} \circ (T_A)_*$ the claim is that
%$[S_A]^{-1} \otimes_{M_2(A)}  =
$z^{-1} = [T_A] \otimes_{M_2(A)} [S_A]^{-1}$.
% is rea
%
The $KK^G$-inverse $[S_A]^{-1}$ may be represented by %the Kasparov
%$M_2(A),A$-cycle
$$[m, M_2(A) \left ( \begin{matrix} 1 & 0 \\ 0 & 0 \end{matrix} \right ),0 ]
\in KK^G(M_2(A),A),$$
where $m$ denotes the multiplication operator.
On the other hand $[T_A] = [ T_A,\left ( \begin{matrix} 0 & 0 \\ 0 & 1 \end{matrix} \right )
M_2(A),0  ] \in KK^G(A,M_2(A))$. Here, the Hilbert modules have trivial grading and the $G$-actions are given by restriction
of $\delta^u$.
We have an isomorphism
$$\left ( \begin{matrix} 0 & 0 \\ 0 & 1 \end{matrix} \right )
M_2(A)
\otimes_{M_2(A)}
M_2(A) \left ( \begin{matrix} 1 & 0 \\ 0 & 0 \end{matrix} \right )
\rightarrow \left ( \begin{matrix} 0 & 0 \\ A & 0 \end{matrix} \right ):
x \otimes y \mapsto xy$$
of $G$-Hilbert $A,A$-bimodules, where the image is also equipped with the restricted $\delta^u$-action.
This proves the claim. The case $u_\#$ is proven similarly.
\end{proof}

\begin{lemma}   \label{lemmakreuz}
Let $\varphi:(A,\alpha) \rightarrow (B,\beta)$
be an equivariant $*$-homomorphism.  %, %and
Let $u$ be an $\alpha$-cocycle
and $v$ a $\beta$-cocycle
such that $v_g \varphi(a) = \varphi( u_g a)$ for all $g \in  G$ and $a \in A$.
Let $F$ be a stable functor from ${\bf C^*}$ to ${\bf Ab}$.
Then 	%$v_g := \overline \varphi(u_g)$ is a $\beta$-cocycle
%If $\varphi(u_g a) = v_g \varphi(a)$ for all $a \in A, g \in G$
%and
$\varphi$ is also an equivariant $*$-homomorphism 
$\varphi:(A,u \alpha u^*) \rightarrow (B, v \beta v^*)$
such that 
%maps $F(A,\alpha) \rightarrow F(B,v \beta
%v^*)$ satisfy
$$v_\# \circ F(\varphi)  = F(\varphi) \circ u_\#
\; : \;F(A,\alpha) \rightarrow F(B,v \beta
v^*).$$
\end{lemma}

\begin{proof}
Just note that $\mbox{id}_{M_2} \otimes \varphi:(M_2(A),\delta^u) \rightarrow
(M_2(B),\delta^v)$ is an equivariant $*$-homomorphism satisfying
$(\mbox{id}_{M_2} \otimes \varphi) \circ S_A = S_B \circ \varphi$
and
$(\mbox{id}_{M_2} \otimes \varphi) \circ T_A = T_B \circ \varphi$.
%Hence the claim.
\end{proof}

\section{The cocyle set $\E^G(A,B)$}

Until Section \ref{sectionPsiprime}
%For the rest of the paper
assume that $B$ is stable (i.e. $B \cong B \otimes \calk$)!

The $A,B$-cocycles defined next will serve as a Cuntz-picture of Kasparov
cycles. We shall prove in the next section that they may substitute Kasparov
theory.
% (the isomorphism $\Phi$).
Confer Remark \ref{remarkcocycles} for a motivation of the following definition:

\begin{definition}  \label{defKaspCocycle}
{\rm
Let $(A,\alpha)$ and $(B,\beta)$ be $G$-algebras (where $B$ is stable).
An {\em %equivariant
$A,B$-cocycle}
is a quadruple
$$(\varphi_\pm,u_\pm):=(\varphi_+,\varphi_-,u_+,u_-)
\in \big(\mbox{Hom}(A,\calm(B)) \big)^2 \times \big(\calm(B)^G
\big)^2
$$
where
%where
%$\varphi_+$ and $\varphi_-$ denote $*$-homomorphisms
%$A \rightarrow \calm(B)$, $u_+$ and
%$u_-$ denote $\beta$-cocycles
%$G \rightarrow \calm(B)$, and the following properties are satisfied:
\begin{itemize}
\item[(a)]
$u_+$ and
$u_-$ denote $\beta$-cocycles
$G \rightarrow \calm(B)$,

\item[(b)]
$\varphi_\pm$ denote $G$-equivariant $*$-homomorphisms
$A \rightarrow \big (\calm(B),u_\pm \overline \beta u_\pm^* \big)$, respectively,

%\item[(a)] Equipping $\calm(B)$ with the
%$u_+  {\beta}_g u_+^*$-$G$-action, $\varphi_+$ is
%$G$-equivariant, i.e.
%$${u_+}_g  \overline{\beta}_g \big( \varphi_+(a) \big)
%{{u_+}_g}^*  = \varphi_+ \big( \alpha_g(a) \big),$$
%\item[(b)] analogously, $\varphi_-$ is $G$-equivariant, 
\item[(c)] $\varphi_+(a) - \varphi_-(a) \in B$,

\item[(d)] ${u_+}_g - {u_-}_g \in B$
\end{itemize}
for all $a$ in $A$ and $g$ in $G$.
%, where all identities and multiplications
%take place in $\calm(B \otimes \calk)$.
}
\end{definition}

\begin{definition}
{\rm
Two $A,B$-cocycles $(\varphi_\pm,u_\pm)$ and $(\psi_\pm,v_\pm)$ are {\em isomorphic}
when there exists a $G$-equivariant automorphism $\gamma \in \mbox{Aut}(B,\beta)$ such that
$$\big (\overline \gamma \circ \varphi_\pm, \overline \gamma(u_\pm) \big) = (\psi_\pm,v_\pm).$$
}
\end{definition}

In the rest of the paper we shall identify isomorphic
$A,B$-cocycles.
%In the following
%In the rest of the paper we shall ignore the difference between isomorphic
%$A,B$-cocylces and identify them.

\begin{definition}
{\rm
The set of isomorphism classes of %equivalent
$A,B$-cocycles is denoted by
$\E^G(A,B)$.
}
\end{definition}

\begin{definition}
{\rm
An $A,B$-cocycles $(\varphi_\pm,u_\pm)$ is called {\em degenerate}
if $\varphi_\pm=0$. The set of degenerate $A,B$-cocycles is denoted by
$\D^G(A,B)$.
}
\end{definition}

\begin{definition}     \label{defhomotopycocycles}
{\rm
Two $A,B$-cocycles $(\varphi_\pm^{t},u_\pm^{t})$ ($t=0,1$) %and $\psi_\pm,v_\pm)$ 
are called
{\em homotopic} if there exists
an $A,B[0,1]$-cocycle $(\varphi_\pm,u_\pm)$ such that
$$\big (\overline{\pi_t} \circ \varphi_\pm, \overline{\pi_t} (u_\pm) \big)
= \big (\varphi_\pm^{t},u_\pm^{t} \big),$$
where $\pi_t : B[0,1] \rightarrow B$ denotes evaluation at time $t=0,1$.
}
\end{definition}

\begin{definition}     \label{defsumcocycles}
{\rm
For $(\varphi_\pm,u_\pm),(\psi_\pm,v_\pm) \in \E^G(A,B)$ define their {\em sum} %in $E^G(A,B)$
to be
$$(\varphi_\pm,u_\pm) + (\psi_\pm,v_\pm) :=
(V_1 \varphi_\pm V_1^* + V_2 \psi_\pm V_2^*, \, V_1 u_\pm V_1^* + V_2 v_\pm V_2^*)
\,  \in \E^G(A,B),$$
where $V_1,V_2 \in \calm(B)$ are $G$-invariant isometries such that
$V_1 V_1^* + V_2 V_2^* = 1$ (see Lemma \ref{lemmainviso}).
}
\end{definition}

\begin{lemma}
Up to homotopy of $A,B$-cocycles, the last definition of sum of $A,B$-cocycles does
not depend on the choice of the isometries $V_1,V_2$.

\end{lemma}

\begin{proof}
Let $W_1,W_2 \in \calm(B)$ be another pair of $G$-invariant isometries such that $W_1 W_1^* + W_2 W_2^* = 1$.
Then $U= W_1 V_1^* + W_2 V_2^*$ defines a $G$-invariant unitary in $\calm(B)$ such that $U V_i = W_i$ ($i=0,1$). 
By Lemma \ref{lemmaUconnected}, $U$ may be connected to $1$ by a $G$-invariant, strictly continuous path $(U_t)_t$ in $\calm(B)$.
Then the cocycle
$$(U_t V_1 \varphi_\pm V_1^* U^*_t +
U_t V_2 \psi_\pm V_2^* U^*_t ,  \, U_t V_1 u_\pm V_1^* U_t^* + U_t V_2 v_\pm V_2^* U_t^*)_{t \in [0,1]}$$
in $\E^G(A,B[0,1])$
yields the desired homotopy.
\end{proof}

\begin{definition}    \label{defFG}
{\rm
Let $\F^G(A,B)$ denote the quotient $\E^G(A,B)/\sim$ under the equivalence relation
$\sim$ defined by $x_1 \sim x_2$ for $x_1,x_2\in \E^G(A,B)$ if and only if there exists
degenerate $d_1,d_2 \in \D^G(A,B)$ such that $x_1 + d_1$ is homotopic to $x_2+d_2$.
We equip $\F^G(A,B)$ with the addition $[x_1]+ [x_2]:= [x_1+x_2]$.
}
\end{definition}

\section{The isomorphism $\Phi$}

\label{sectionPhi}

In this section we shall %equivalently
isomorphically substitute Kasparov theory
by its Cuntz picture in form of $A,B$-cocycles.
This transition is given as follows:
%This translation is given by the following transition:
%This transition is given by the following map:

\begin{definition}   \label{defPhi}
{\rm
There is a map $$\Delta:\E^G(A,B) \rightarrow KK^G(A,B)
%$ given by
, \quad\Delta(\varphi_\pm,u_\pm) := [\varphi,B \oplus B,F],$$ where
the $G$-Hilbert $B$-module $B \oplus B$ is equipped with the grading
$\epsilon(x,y)=(x,-y)$ and the $G$-action
$$(u_+ \beta, u_- \beta)_g (x,y) = ({u_+}_g \beta_g(x), {u_-}_g \beta_g(y)),$$
$F$ is the flip automorphism,
and the operator $\varphi:A \rightarrow \call_B(B \oplus B)$
is defined by % $\varphi= \varphi_+ \oplus \varphi_-$ (i.e.
$$\varphi(a)(x,y)=(\varphi_+(a) x, \varphi_-(a) y).$$
% An $A,B$-cocycle $(\varphi_\pm,u_\pm)$ induces
}
\end{definition}

%Notice that if $\phi_+$ was a map $\phi_+ : A \rightarrow \calm

\begin{lemma}  \label{lempsi}
The just defined $\Delta(\varphi_\pm,u_\pm)$ is indeed a Kasparov
group element. %cycle.
\end{lemma}

\begin{proof}
Denoting $u=(u_+,u_-)$, %and
$\gamma =(\beta,\beta)$
% $W_g = ((u_+)_g \beta_g,(u_-)_g \beta_g)$
and the indicated $G$-action $(u_+\beta,u_- \beta)$ on $B \oplus B$ by $W=u \gamma$, one has
%
%$$W_{gh} = u_{gh} \beta_{gh} = u_g \overline{\gamma_g}(u_h) \gamma_{g h}
%= u_g \gamma_g u_h \gamma_{g^{-1}} \gamma_g \gamma_h = W_g W_h$$
%
$$W_{gh} = u_{gh} \gamma_{gh} = u_g \overline{\gamma_g}(u_h) \gamma_{g h}
= u_g \gamma_g u_h \gamma_{g^{-1}} \gamma_g \gamma_h = W_g W_h$$
because of the third identity of (\ref{defcocycle}) and because
$\beta_{g^{-1} g}$ is in the center of $\calm(B)$.
%by Definition \ref{defCstar}.
We have
$$
\langle W_g (x,y),W_g(a,b)\rangle  = \langle \gamma_g (x,y),u_g^* u_g \gamma_g(a,b)\rangle
= \gamma_g \langle(x,y),(a,b)\rangle,
$$
%
%\begin{eqnarray}
%\langle W_g (x,y),W_g(a,b)\rangle &=& \langle \gamma_g (x,y),u_g^* u_g \gamma_g(a,b)\rangle\\
%&=& \gamma_g \big (\langle(x,y),(a,b)\rangle \big),
%\end{eqnarray}
because $\gamma_{g g^{-1}} = u_g^* u_g$ by the first identity
of the cocycle axioms (\ref{defcocycle}).
%Further, observe that
%$$W_g W_{g^{-1}} W_g = u_g \gamma_g u_{g^{-1}} \gamma_{g^{-1}}
%u_g \gamma_g = u_g u_g^* u_g \gamma_g = W_g$$
%by the second relation of (\ref{defcocycle}).
%
We have $\overline{\gamma_g}(u_{g^{-1}})= u_g^*$ by Lemma \ref{lemmacocycle}.(c),
and thus
$W_g W_{g^{-1}} = u_g \gamma_g u_{g^{-1}} \gamma_{g^{-1}}
= u_g u_g^*$, which is a self-adjoint projection in $\call_B(B \oplus B)$.
%That $\varphi$ is $G$-equivariant follows like for groups $G$.
By a similar argument, and with condition (b) of Definition \ref{defKaspCocycle}
we get
$$\varphi(\alpha_g(a)) = u_g \overline{\gamma_g}(\varphi(a)) u_g^*
= u_g \gamma_g \varphi(a) \gamma_{g^{-1}} \gamma_g u_{g^{-1}}\gamma_{g^{-1}}
= W_g \varphi_g(a) W_{g^{-1}}.$$

The $B$-module %structure
multiplication
on $B \oplus B$ is $E$-compatible, in other words
\begin{eqnarray*}
%W_e(\xi) b = u_e \gamma_e(\xi b)= u_e(\xi \gamma_e(b))= u_e(\xi) \gamma_e(b) = \xi \gamma_e(b)
W_e(\xi) b = u_e \gamma_e(\xi) b= \gamma_e(\xi) b = \xi \gamma_e(b)
\end{eqnarray*}
for $\xi \in B \oplus B, b \in B$ and $e \in E$, because $\gamma_{e} = u_e$
by %(\ref{defcocycle}).
Lemma \ref{lemmacocycle}.
%Since $\gamma_{g^{-1} g}$ is in the center of $\call(E)$, it commutes with $u_h$, and thus
%$W_g W_h = u_g \gamma_g u_h \gamma_{g^{-1}} \gamma_g \gamma_h = W_{g h}$
%by the cocycle condition (\ref{defcocycle}).
%
A straightforward computation shows that the operator $W_g F W_{g^{-1}} - W_g W_{g^{-1}} F$
is in $B \oplus B$ because
${u_+}_g - {u_-}_g$ is in the ideal $B$ by Definition \ref{defKaspCocycle}.
This verifies Definition \ref{defCycle} of a Kasparov cycle.
\end{proof}

%\begin{definition}
%{\rm
%The map $\Delta$ of Definition \ref{defPhi} canonically induces a map
%$\Phi: \F^G(A,B) \rightarrow  KK^G(A,B)$ by $\Phi([x]):= \Delta(x)$.
%}
%\end{definition}
%
%It is clear that $\Phi$ is a well-defined map which preserves addition.

%\begin{lemma}
%For any Kasparov cycle $(\varphi,\cale,T) \in KK^G(A,B)$ we
%
%\end{lemma}

\begin{proposition}    \label{propcycle}
Every element %$[\varphi,\cale,T] \in
of $KK^G(A,B)$
may be represented in the form %$[\varphi,\cale,T] =
$[\varphi,B \oplus B,\left ( \begin{matrix} 0 & 1 \\ 1 & 0 \end{matrix} \right )]$
for a certain %(`unpredictable')
$G$-action $S=(S_+ ,S_-)$ on the Hilbert $B$-module $B \oplus B$
with grading $\epsilon(x,y) = (x,-y)$, where $S_\pm$ are $G$-actions on the ungraded
Hilbert $B$-module $B$. Moreover,
\begin{equation}    \label{equkas}
S_g \left ( \begin{matrix} 0 & 1 \\ 1 & 0 \end{matrix} \right ) S_{g^{-1}}
- \left ( \begin{matrix} 0 & 1 \\ 1 & 0 \end{matrix} \right ) S_g S_{g^{-1}} \in
\calk(\call_B(B\oplus B)) \cong M_2(B)
\end{equation}
for all $g \in G$.
\end{proposition}

\begin{proof}
Let $(\varphi,\cale,T) \in KK^G(A,B)$ be given. 
Denote the $G$-action on $\cale$ by $U$. We may assume that
$U_g T U_{g^{-1}} - U_{g g^{-1}} T \in \calk(\cale)$.
(If $G$ were a group then this would be by
Remark 2 on page 156 of Kasparov's paper \cite{kasparov1988}.
%one would be able to assume that $W_g T W_g - T \in \calk(\cale)$ for all $g \in G$.
But this works also
in our setting by a similar proof as suggested by Kasparov but with
$\varphi(g) = U_g T U_{g^{-1}} - T U_{g g^{-1}}$ rather than $\varphi(g) = U_g T U_g - T$,
and applied to the technical Theorem 1 in \cite{burgiSemimultiKK}
rather than the technical Theorem 1.4 in \cite{kasparov1988}.)

Let $B_2$ denote the $G$-Hilbert $B$-module $B \oplus B$ with grading
$\epsilon(x,y)=(x,-y)$ and $G$-action $(\beta,\beta)$.
Since $(0,B_2,0) \in KK^G(A,B)$ is degenerate,
$$(\varphi,\cale,T) \oplus (0,B_2,0)$$
is homotopic in $KK^G$-theory to $(\varphi,\cale,T)$.
By Kasparov's stabilization theorem (the graded version), there is an isomorphism
$\Lambda:\cale \oplus B_2 \rightarrow B_2$ of graded Hilbert $B$-modules;
we use here the fact that $B$ is stable,
and thus $H_B \cong B$, see \cite[Lemma 1.3.2]{jensenthomsen}. %, so $B \cong B \otimes \calk$.
We define the $G$-action on $B_2$ in the image of $\Lambda$ in such a way that $\Lambda$
becomes $G$-equivariant, and denote this new $B_2$ by $B_2'$. Hence we may write $[\varphi,\cale,T]=[\psi,B_2',T_1]$.

Since the $G$-action $W$ on $B_2'$ is grading preserving, it must be of the form
$$W_g(x,y)= (S_g x, V_g y),$$
where $S$ and $V$ are $G$-actions on the ungraded homogeneous parts
($B$-parts) of $B_2'$.
Hence
$$W_g'(x,y)= (V_g x, S_g y)$$
is another $G$-action on $B_2$, and we denote this new $B_2$ by $B_2''$.
Using $[\psi,B_2',T_1] = [\psi,B_2',T_1] + [0,B_2'',0]$ (degenerate), 
and using isomorphisms $B \oplus B \cong B$ on the respective homogeneous parts by Kasparov's stabilization theorem,
we may assume that the $G$-action on $B_2'$ is of the form
$$W_g(x,y) = (S_g x,S_g y),$$
where $S$ is a $G$-action on the homogeneous $B$-parts of $B_2'$.

Identifying $\call_B(B_2') \cong M_2(\call_B(B))$, $T_1$ takes on the form
$T_1= \left ( \begin{matrix} 0 & x \\ y & 0 \end{matrix} \right )$.

By considering the same homotopies as in the non-equivariant case,
see \cite[p. 125]{jensenthomsen} (notice that $U_g T_1^n U_{g^{-1}} = (U_g T_1 U_{g^{-1}})^n = T_1^n U_g U_{g^{-1}}$
by Lemma \ref{lemmacenterU} and (\ref{UT})),
%\cite[Proposition 17.4.3]{}, or \cite[p. 125]{},
we may assume that
$x=y^*$ and $\|x\|\le  1$.
Also, by adding on the degenerate cycle $[0,B_2',0]$, and performing 
the same homotopy as in the non-equivariant case, %cf. \cite[17.6]{}, \cite[Lemma 3.6]{},
see \cite[p. 126]{jensenthomsen},
%or \cite[p. 126]{},
we may assume that
$T_1= \left ( \begin{matrix} 0 & u \\ u^* & 0 \end{matrix} \right )$
for some unitary $u \in \calm(B)$.

Define an automorphism $\Theta:B_2 \rightarrow B_2$ of Hilbert $B$-modules
by 
$$\Theta(x,y) = (u^* x,y),$$
and define a $G$-action on its image $B_2$, then denoted by $B_2'''$,
in such a way that $\Theta: B_2' \rightarrow B_2'''$ becomes $G$-equivariant.
% from $B_2'$ to $B_2'''$.
%The new $B_2
Hence $[\psi,B_2',T_1] = [\vartheta,B_2''',\left ( \begin{matrix} 0 & 1 \\ 1 & 0 \end{matrix} \right )]$.
\end{proof}

\begin{remark}  \label{remarkcocycles}
{\rm
We use the last proposition as a basis for a Cuntz picture of $KK$-theory.
The $S_+$-action appearing there we shall define (in the next theorem) to be written as ${S_+}_g = {u_+}_g \circ \beta_{g}$
(exactly the $G$-Hilbert module action appearing in Definition \ref{defPhi} and in Definition \ref{defKaspCocycle}.(b)),
or in other words, we define ${S_+}_g \circ \beta_{g^{-1}} =: {u_+}_g$, and $u_+$ turns out to be a $\beta$-cocycle.
%That is the 
In other words, $u_+$ encodes the difference between the $C^*$-action $\beta$ on $B$ and the Hilbert module action $S_+$ on $B$. 
That is the function of $\beta$-cocycles.
%That is all about $\beta$-cocycles.
%That is all about the maybe still unmotivated $\beta$-cocycles.
}
\end{remark}

\begin{theorem}  \label{thmisochi}
The set $\F^G(A,B)$ is an abelian group,
and
the map $\Delta$ of Definition \ref{defPhi} canonically induces an abelian group isomorphism
$$\Phi: \F^G(A,B) \rightarrow  KK^G(A,B)$$
by $\Phi([x]):= \Delta(x)$.
%
%is also an isomorphism for an inverse semigroup $G$.
% also in our setting.
\end{theorem}

\begin{proof}
It is clear that $\Phi$ is a well-defined map which preserves addition.
It will thus be sufficient to show that $\Phi$ is bijective.

We are going to show that $\Phi$ is surjective.
Let us be given an element $z$ in $KK^G(A,B)$ as indicated in Proposition \ref{propcycle}.
Since $\varphi$ respects grading, it is of the form $\varphi=(\varphi_+,\varphi_-)$.
We claim that $\Phi([\varphi_\pm,u_\pm]) = z$,
where the $\beta$-cocylces $u_\pm$ are defined by ${u_\pm}_g := {S_\pm}_g \circ \beta_{g^{-1}}$.
%$${u_\pm}_g (x) = S^\pm_g \beta_g(x)$$

To check that $u_+$ (and similarly $u_-$) is a
$\beta$-cocycle, we compute
\begin{eqnarray*}
%&&
\langle {u_+}_g x,y \rangle &=& \langle {S_+}_g \beta_{g^{-1}} x,y \rangle
= \langle {S_+}_g \beta_{g^{-1}} x, {S_+}_g {S_+}_{g^{-1}} y \rangle
= \beta_g (\langle \beta_{g^{-1}} x, {S_+}_{g^{-1}} y \rangle )\\
&=&  \beta_g \beta_{g^{-1}} (x^*) \cdot \beta_g {S_+}_{g^{-1}} (y)
= x^* \cdot  \beta_g {S_+}_{g^{-1}} (y) = \langle x, \beta_{g} {S_+}_{g^{-1}} y \rangle
\end{eqnarray*}
for all $x$ and $y$ in $B$ by Lemma \ref{lemmacenterU}
and Definitions \ref{defCstar} and \ref{defHilbert}, so that
\begin{equation}    \label{ustar}
{{u_\pm}_g}^* = \beta_{g} \circ {S_\pm}_{g^{-1}}.
\end{equation}
%The element $u_g$ is a partial isometry because $\gamma_{g^{-1}} \gamma_g$ and %$S^+_{g^{-1}} S^+_g$
%are in the center of $\calm(B \otimes \calk)$.
For an idempotent $e \in E$ and all $x,y \in B$
we have
${S_+}_e(x) y = x \beta_e(y) = \beta_e(x) y$  by Definitions
\ref{defCstar} and \ref{defHilbert},
so that we obtain
\begin{equation}   \label{sequb}
{S_+}_e = \beta_e.
\end{equation}
This shows that
$${{u_+}_g}^* {u_+}_g = \beta_{g} {S_+}_{g^{-1}} {S_+}_g \beta_{g^{-1}}
= \beta_{g} \beta_{g^{-1} g} \beta_{g^{-1}} = \beta_{g} \beta_{g^{-1}},$$
the first identity of (\ref{defcocycle}). %and
Similarly we get the second identity and the third one computes as
%and third identity.
%ZU BEWEISEN
$${u_+}_{g h} = {S_+}_{g h} \beta_{ h^{-1} g^{-1}}
= {S_+}_g \beta_{g^{-1}} \beta_g {S_+}_h \beta_{{h}^{-1}} \beta_{g^{-1}}
= {u_+}_g \overline \beta_g ({u_+}_h).$$
 
%
%This also shows that ${{u_+}_g}^* {u_+}_g$ and ${{u_+}_g} {u_+}_g^*$ are in the center of %$\calm(B \otimes \calk)$.
%Further we have
%\begin{eqnarray}
%&& \overline{\gamma_g}({{u_+}_{g^{-1}}}) = \gamma_g S^+_{g^{-1}} \gamma_{g} \gamma_{g^{-1}}
%= \gamma_g S^+_{g^{-1}} S^+_{g g^{-1}} = {{u_+}_{g}}^*\\
%&& {u_+}_{g} \overline{\gamma_g}({{u_+}_{h}})
%= S^+_g \gamma_{g^{-1}} \gamma_g S^+_h \gamma_{h^{-1}} \gamma_{g^{-1}}
%= S^+_g S^+_{g^{-1} g} S^+_h \gamma_{h^{-1}} \gamma_{g^{-1}}
%= {u_+}_{g h},
%\end{eqnarray}
%the other identitites of (\ref{defcocycle}).
%

%In continuing the verification that $(\varphi_\pm,u_\pm) \in \E^G(A, B \otimes \calk)$
%In continuing the verification  that $(\varphi_\pm,u_\pm)$ is an
%$A,B$-cocycle,
We note that,
since ${S_+}_{g^{-1} g} = \beta_{g^{-1} g}$, we have, with $S:=(S_+,S_-)$, (\ref{sequb})
and (\ref{equkas}),
\begin{eqnarray*}
%M_2(B) &\ni& 
&&S_g \left (\begin{matrix}
0 & 1\\ 1 & 0
\end{matrix} \right) S_{g^{-1}} -
\left (\begin{matrix}
0 & 1\\ 1 & 0
\end{matrix} \right) S_g S_{g^{-1}}\\
&=& \left( \begin{matrix}
0 & {S_+}_g \beta_{g^{-1} g} {S_-}_{g^{-1}} - {S_-}_{g} \beta_{g^{-1} g} {S_-}_{g^{-1}}\\
%S_g^- S_{g^{-1}}^{+} - S_{g}^{+} S_{g^{-1}}^{+} & 0
x & 0
\end{matrix} \right)
=
\left( \begin{matrix}
0 & ({u_+}_g  - {u_-}_g) {{u_-}_g}^*\\
%S_g^- S_{g^{-1}}^{+} - S_{g}^{+} S_{g^{-1}}^{+} & 0
%({u_-}_g  - {u_+}_g) {{u_+}_g}^* & 0
x & 0
\end{matrix} \right)
\end{eqnarray*}
is in $M_2(B)$
for a certain obvious but irrelevant $x$, and thus
$$
%B \otimes \calk \ni
({u_+}_g  - {u_-}_g) {{u_-}_g}^* {{u_-}_g} = ({u_+}_g  - {u_-}_g)
\beta_{g g^{-1}}
={u_+}_g  - {u_-}_g$$
%(${u_+}_g  - {u_-}_g \in B \otimes \calk$ by point 4 in the definition
%of an equivariant $A,B$-cocycle in Thomsen's paper)
is in $B$ as required
by item (d) of Definition \ref{defKaspCocycle}.

%for $(\varphi_\pm,u_\pm)$ to be in $\E^G(A,B)$.

%Since $\varphi$ respects grading, it is of the form $(\varphi_+,\varphi_-)$,
%and
Since $\varphi$ is $G$-equivariant, we have $\varphi_\pm(\alpha_g(a))= {S_\pm}_g \varphi_\pm(a)
{{S_\pm}_{g^{-1}}}$.
Thus, by (\ref{ustar}) and (\ref{sequb}),
\begin{eqnarray*}
&& ({u_+} \overline \beta  {u_+}^*)_g \big (\varphi_+(a) \big ) =
{u_+}_g \overline \beta_g  \big(\varphi_+(a) \big)   {u_+}^*_g  \\
&=& {S_+}_g \circ \beta_{g^{-1}} \circ \beta_g \circ \varphi_+(a) \circ \beta_{g^{-1}} 
\circ \beta_{g} \circ {S_+}_{g^{-1}}
= {S_+}_g \circ  \varphi_+(a) \circ {S_+}_{g^{-1}}  \\
&=& \varphi_+(\alpha_g(a)),
\end{eqnarray*}
which verifies item (b) of Definition \ref{defKaspCocycle}.

%$$u_+ \overline \beta_g u_+^* \varphi_+(a) u_+ \overline \beta_g u_+^*
%= $$
%The item (b) of Definition \ref{defKaspCocycle} is easily checked
%when noting that $S^+_e = \beta_e$ ($e$ an idempotent)
%are in the center of $\calm(B \otimes \calk)$.

%Finally, note that by the proof of Lemma \ref{lempsi} the $G$-action on $E$ via %$\Phi(\varphi_\pm,u_\pm)$
%is given by ${u_{\pm}}_g \gamma_g = S^{\pm}_g \gamma_{g^{-1} g} = S^{\pm}_g$,
%so recovers the original action $(S_+,S_-)$.
%

Now notice that
indeed $\Delta(\varphi_\pm,u_\pm) = z$ (see Definition \ref{defPhi}), since $u_\pm \beta = {S_\pm}$
by (\ref{sequb}).
This proves surjectivity of $\Phi$.

%The injectivity of $\Phi$ is then again similarly proved as in Thomsen's
%paper, with similar inverse semigroup relevant verifications as before.
%(A unitary path constructed there goes literally through in our setting as these
%unitaries have nothing to do with a $G$-action.)
%%
%% The injectivity proof goes literally through like Thomsen's proof.

We are going to prove injectivity of $\Phi$.
%We shall be more sketchy than before.
Let $(\varphi^i_\pm,u_\pm^i) \in \E^G(A,B)$ for $i=0,1$. Assume that $\Phi([\varphi^0_\pm,u_\pm^0]) =
\Phi([\varphi^1_\pm,u_\pm^1])$. Then there exists a Kasparov cycle $(\sigma,\cale,T)$ in $KK^G(A,B[0,1])$
connecting the two cycles $\Delta(\varphi_\pm^i,u_\pm^i)$. % for $i=0,1$.
We apply the procedure described in the surjectivity proof of $\Phi$ (the construction of the preimage of an element)
%in the proof of Proposition \ref{propcycle}
to the cycle $(\sigma,\cale,T)$, and end up with an element $(\psi_\pm,v_\pm) \in \E^G(A,B[0,1])$.
This is also a homotopy in the sense of Definition \ref{defhomotopycocycles}.
%connecting the two points $(\overline \pi_i \circ \psi_\pm, \overline \pi_i(v_g))$
%for $i=0,1$ ($\pi_t:B[0,1] \rightarrow B$ the evaluation map for $t \in [0,1]$).
%(whose image under $\Phi$ is $(\sigma,\cale,T)$).

Because at the endpoints of the cycle $(\sigma,\cale,T)$ we have already the nice form
of Definition \ref{defPhi}, all operations that we perform in Proposition \ref{propcycle} for $(\sigma,\cale,T)$
are empty at the endpoints,
except adding on degenerate cycles and application of the Kasparov stabilization theorem. 
Thus, at the endpoints of the %this %element
homotopy $(\psi_\pm,v_\pm)$ we have the following situation.
Let $\pi_t:B[0,1] \rightarrow B$ be the evaluation map for $t \in [0,1]$.
There is a degenerate $A,B$-cocycle $(0,0,z_\pm) \in \D^G(A,B)$
and an %(non-equivariant)
isomorphism $\Lambda:B \oplus B \rightarrow B$ of Hilbert $B$-modules
% (without any $G$-actions)
such that
\begin{eqnarray*}
\overline \pi_i \circ \psi_\pm (\cdot) &=& \Lambda (\varphi_\pm^i(\cdot) \oplus 0) \Lambda^{-1}  \\
\overline \pi_i({v_\pm}_g) \circ \beta_g  &=&  \Lambda ( {u_\pm^i}_g \circ \beta_g \oplus {z_\pm}_g \circ \beta_g) \Lambda^{-1}
\end{eqnarray*}
for $i=0,1$.
%Of course we have already a homotopy
%\begin{equation}    \label{somehomotopy}
%\big (\overline \pi_t \circ \psi_\pm, \overline \pi_t(v) \big )_{t \in [0,1]} \in \E^G(A,B[0,1]).
%\end{equation}

Our next goal is to make $\Lambda$ $G$-equivariant by
multiplying it with some unitary path.  %We achieve this through a unitary $U$
Define isometries $W_1,W_2 \in \calm(B)$ by $W_1(x)=\Lambda(x,0)$ and $W_2(y)=\Lambda(0,y)$, so that
$$\Lambda(x,y)= W_1(x) + W_2(y)$$
and $W_1 W_1^* + W_2 W_2^* =1$.
Choose $G$-invariant isometries $V_1,V_2 \in \calm(B)$ as in Lemma \ref{lemmainviso}.
Consider the unitary $U= V_1 W_1^* + V_2 W_2^* \in \calm(B)$ and - as the unitary group of $\calm(B)$ is connected
by Lemma \ref{lemmaUconnected} -
connect it to $1 \in \calm(B)$
by a unitary path $(U_t)_{t \in [0,1]}$ in $\calm(B)$.
%Define ${W_i}_t = U_t {W_i}$
%
%Define $\Lambda_t = U_t \circ \Lambda$.
Then
$$ \big ( \; U_t (\overline \pi_i \circ \psi_\pm (\cdot)) U_t^*, \;
U_t \circ \overline \pi_i(v_g) \circ \beta_g \circ U_t^*
\circ \beta_{g^{-1}} \; \big)_{t \in [0,1]}
\in \E^G(A,B[0,1])$$
defines a path of $A,B$-cocycles which connects the two elements
$$\big (\overline \pi_i \circ \psi_\pm,\overline \pi_i(v) \big ), \qquad  \big (\varphi_\pm^i,
u_\pm^i \big ) + \big (0, 0, z_\pm \big )$$
in $\E^G(A,B)$
by Definitions \ref{defhomotopycocycles} and \ref{defsumcocycles}.
%By Definition \ref{defFG} these two elements are identic in $\F^G(A,B)$.
Together with the homotopy %(\ref{somehomotopy})
$(\psi_\pm,v_\pm)$ and Definition \ref{defFG} this shows that
$[\varphi_\pm^0, {u_\pm^0}_g] = [\varphi_\pm^1, {u_\pm^1}_g]$.
\end{proof}

\begin{definition}    \label{deffunctorialityE}
{\rm
For %a {\em unital}
an equivariant $*$-homomorphism $\lambda:B \rightarrow C$
(where $B$ and $C$ are stable) define an abelian group homomorphism
$$\lambda_* :\F^G(A,B) \rightarrow \F^G(A,C):  \lambda_*[x] = \Phi^{-1}(\lambda_* \Phi([x])) . $$
%$$\lambda_* :\F(A,B) \rightarrow \F(A,C) : \lambda_* [\varphi_\pm,u_\pm] = [\overline
%\lambda \circ
%\varphi_\pm, \overline \lambda(u_\pm)]. $$
}
\end{definition}

%That $\lambda_*$ is indeed a well-defined homomorphism is verified by the following lemma.

\begin{lemma}   \label{lemmaphifunct}
Let $\lambda_1: (B_1,\beta_1) \rightarrow (C_1,\gamma_1)$ be a unital $*$-homomorphism of unital $G$-algebras $B_1,C_1$.
Let
$\lambda := \lambda_1 \otimes \mbox{id}: (B,\beta):= (B_1 \otimes \calk, \beta_1 \otimes \mbox{id})
\rightarrow (C,\gamma):= (C_1 \otimes \calk, \gamma_1 \otimes \mbox{id}) $.
%
%For a {unital} equivariant $*$-homomorphism $\lambda \otimes 1:= \mu \otimes \mbox{id}_\calk:(C \otimes \calk,\beta \otimes \mbox{triv}) %\rightarrow
%(D \otimes \calk,\gamma \otimes \mbox{triv})$
%(where $C$ and $D$ are unital) one has
Then one has
$$\lambda_* [\varphi_\pm,u_\pm] = [\overline
\lambda \circ
\varphi_\pm, \overline \lambda(u_\pm)]. $$
%
%One has $\Phi(\lambda_*[x]) = \lambda_* \Phi([x])$.
\end{lemma}

\begin{proof}
(Sketch)
We have $\overline \lambda( \beta_{g g^{-1}}) = \overline \lambda( \overline \beta_{g} (1))
= \overline \gamma_g \overline \lambda(1) = \gamma_{g g^{-1}}$, so that it is easy to see that
$\overline \lambda(u_\pm)$ are $\gamma$-cocycles.

%Also
By unitality of $\lambda_1$, $B \otimes_B C \cong C$ %$G$-equivariantly
as $G$-Hilbert $C$-modules via $x \otimes y \mapsto \lambda(x) y$.
Under this isomorphism, $\varphi_\pm \otimes \mbox{id}_C: A \rightarrow \call_C(B \otimes_B C)$ turns
to $\overline \lambda\circ \varphi_\pm$,    %and $u_\pm \in \calm(B \otimes_B C)$ to $\overline \lambda(u_\pm)$.
and the $G$-Hilbert $C$-module actions $u_\pm \beta \otimes \gamma$ on %$G \rightarrow \mbox{Lin}(
$B \otimes_B C$
turn to $\overline \lambda (u_\pm) \gamma$.
%
%We have $\overline{\lambda \otimes \mbox{id}}( \beta_{g g^{-1}} \otimes 1) =
%\overline{\lambda}( \overline \beta_{g} (1) ) \otimes 1
%= \overline \gamma_g \overline \lambda(1) \otimes 1 = \gamma_{g g^{-1}} \otimes 1$, so that it is easy to see that
%$\overline{\lambda  \otimes \mbox{id}}(u_\pm)$ are $\gamma$-cocycles.
%
%Also by unitality of $\lambda$, $B \otimes \calk \otimes_{B \otimes \calk} C  \otimes \calk \cong C \otimes \calk$ %$G$-equivariantly
%as $G$-Hilbert $C$-modules via
%$$x \otimes y \mapsto (\lambda \otimes \mbox{id})(x) y .$$
%Under this isomomorphism, $\varphi_\pm \otimes \mbox{id}_{C \otimes \calk}: A \rightarrow
%\call_{C \otimes \mbox{id}}(B  \otimes \mbox{id} \otimes_{B  \otimes \mbox{id}} C  \otimes \mbox{id})$ turns
%to $\overline{\lambda \otimes \mbox{id}}\circ \varphi_\pm$,    %and $u_\pm \in \calm(B \otimes_B C)$ to $\overline \lambda(u_\pm)$.
%and the $G$-Hilbert $C$-module actions $u_\pm \beta \otimes \gamma$ on %$G \rightarrow \mbox{Lin}(
%$B \otimes_B C$
%turn to $\overline{ \lambda \otimes \mbox{id}} (u_\pm) \gamma$.
\end{proof}

\section{The map $\Psi$}

\label{sectionPsi}

%In this section we shall see how elements of $\E^G(A,B)$, which is equivalent
%to Kasparov theory $KK^G(A,B)$ by the isomorphism 

In this section we shall see how elements of Kasparov theory $KK^G(A,B)$ -
in its form of $A,B$-cocylces (Cuntz picture cycles) in $\E^G(A,B)$ by the isomorphism
$\Phi$ if we like -
induce homomorphisms in $\mbox{Hom}(F(A),F(B))$ for every split exact, homotopy invariant
stable functor $F$ from ${\bf C^*}$ to ${\bf Ab}$.
This goes back to Cuntz in its core, see \cite{cuntz1984}.

\begin{definition}   \label{defAx}
{\rm
Given an $A,B$-cocycle $x=(\varphi_{\pm},u_{\pm}) \in \E^G(A,B)$
we define a $C^*$-algebra
$$A_x  \,:=\,  \{(a,m) \in A \oplus \calm(B)| \, \varphi_+(a) = m \,\, \mbox{modulo} \,\, B \}$$
with two $G$-actions ($+$ and $-$)
$$\Gamma^\pm = (\alpha, u_\pm \overline \beta u_\pm^*).$$
A $\Gamma^+$-cocycle $u$ for $(A_x,\Gamma^+)$ is given by
%$u$ for $(A_x,\gamma^{u_+})$ by
\begin{displaymath} %equation}  \label{modifiedu}
u_g(a,m) = (\alpha_{g g^{-1}}(a), {u_-}_g {u_+}_g^* m )
\end{displaymath}
for $a \in A, m \in \calm(B)$.
}
\end{definition}

\begin{definition}
{\rm
We sloppily use $\Gamma^\pm$ also to denote the $G$-action on $B$
by restricting $\Gamma^\pm$ to $B$, that is, $\Gamma^\pm := u_\pm \overline \beta u_\pm^*$
on $B$.
% Also, we sloppily denote by $u$ the $\Gamma^+$-cocylce for $B$ by restriction of $u$,
%that is, $u_g(b):= {u_-}_g {u_+}_g^* b$.
}
\end{definition}

\begin{lemma}
%The
Definition \ref{defAx} is valid.
\end{lemma}

\begin{proof}
We show that $u$ is a $\Gamma^+$-cocycle.
By Lemma \ref{lemmacocycle}, and since ${u_-}_g^* {u_-}_g$ is in the center
of $\calm(B)$, we have
\begin{eqnarray*}
&& u_g^* u_g(a,m) = (\alpha_{g g^{-1}}(a),{u_+}_g {u_-}_g^* {u_-}_g {u_+}_g^* m ) \\
&=& (\alpha_{g g^{-1}}(a),{u_+}_g {u_+}_g^* {u_-}_g^* {u_-}_g m )
=  (\alpha_{g g^{-1}}(a), \beta_{g g^{-1}} m  \beta_{g g^{-1}} ) \\
&=& (\alpha_{g g^{-1}}(a), {u_+}_{g g^{-1}} \overline \beta_{g g^{-1}}(m) {{u_+}_{g g^{-1}}}^*)
= \Gamma^{+}_{g g^{-1}}(a,m).
\end{eqnarray*}

This shows that $u_g^* u_g = \Gamma^{+}_{g g^{-1}}$, and so the first identity
of (\ref{defcocycle}). The second identity of (\ref{defcocycle}) is left to the reader
and the third one is computed as follows:
\begin{eqnarray*}
&& u_{g} \overline \Gamma_g^+(u_h) (a,m) = u_{g} \circ \Gamma_g^+ \circ u_h \circ
\Gamma_{g^{-1}}^+ (a,m) \\
&=& \big (\alpha_{g g^{-1} g h h^{-1} g^{-1}}  (a), \; {u_-}_g {u_+}_g^* {u_+}_g 
\overline \beta_g \big ( \;
{u_-}_h {u_+}_h^* {u_+}_{g^{-1}} 
\overline \beta_{g^{-1}}  (m)
{u_+}_{g^{-1}}^*
\;
\big ) {u_+}_g^*   \;\big ) \\
&=& \big (\alpha_{g h h^{-1} g^{-1}}  (a), \; {u_-}_{gh}
\overline \beta_g \big ( \; {u_+}_h^* {u_+}_{g^{-1}} 
\overline \beta_{g^{-1}}  (m)
{u_+}_{g^{-1}}^*
\;
\big ) {u_+}_g^*   \;\big )   \\
&=& \big (\alpha_{g h h^{-1} g^{-1}}  (a), \; {u_-}_{gh}
\overline \beta_g( {u_+}_h)^* {u_+}_{g}^* 
\overline \beta_{g g^{-1}}  (m)
{u_+}_{g}
 {u_+}_g^*   \;\big ) \\
&=& \big (\alpha_{g h h^{-1} g^{-1}}  (a), \; {u_-}_{gh}
{u_+}_{gh}^*
(m)
 \;\big ) \\
&=&  u_{gh}(a,m)
\end{eqnarray*}
with the usual center properties, identities (\ref{defcocycle}),
and the identity of Lemma \ref{lemmacocycle}.(c).

We show that $A_x$ is invariant under the $G$-action $u$. %the map $u_g$.
Let $(a,m) \in A_x$, so $\varphi_+(a) - m \in B$.
%By Definition \ref{defKaspCocycle}, identity 4, we get ${u_-}_g {u_+}_g^* - {u_-}_g {u_-}_g^*
%\in B \otimes \calk$. Hence, by
By items (b) and (d) of Definition \ref{defKaspCocycle}
and the identity $\beta_{g g^{-1}} = {u_+}_{g g^{-1}}$ of (\ref{defcocycle})
we get modulo $B$
\begin{eqnarray*}
&& \varphi_+(\alpha_{g g^{-1}}(a))
= {u_+}_{g g^{-1}} \overline \beta_{g g^{-1}} \big ( \varphi_+(a) \big )  {u_+}_{g g^{-1}}^*
\equiv \overline \beta_{g g^{-1}} (m)\\
&=& \beta_{g g^{-1}} m = {u_-}_g {u_-}_g^* m \equiv {u_-}_g {u_+}_g^* m.
\end{eqnarray*}
This proves that $u_g(a,m)$ is in $A_x$.
\end{proof}

\begin{definition}    \label{defpsi}
{\rm
Let $x=(\varphi_\pm, u_\pm) \in \E^G(A,B)$.
We have two split exact sequences ($+$ and $-$)
\begin{equation}  \label{splitAx}
\xymatrix{0 \ar[r] & (B,\Gamma^\pm) \ar[r]^{j} & (A_x,\Gamma^\pm) \ar@<.5ex>[r]^{p} &
(A,\alpha) \ar[r] \ar@<.5ex>[l]^{s^\pm} & 0,  }
\end{equation}
where $j(b) = (0,b)$, $p(a,m) = a$ and $s^\pm(a)=(a,\varphi_\pm(a))$.

Let $F$ be a stable, homotopy invariant, split-exact functor from ${\bf C^*}$
to ${\bf Ab}$.
Define
an abelian group homomorphism
$$\Psi_x:F(A,\alpha) \rightarrow F(B,\beta)$$
by
$$\Psi_x = {u_-}_\#^{-1} \circ F(j)^{-1} \circ \big (u_\# \circ F(s^+) - F(s^-) \big).$$
}
\end{definition}

Notice here that the occurrence of $F(j)^{-1}$ is valid as $u_\#$ alters only the $G$-action
% such that
whence $F(p) \circ
(u_\# \circ F(s^+) - F(s^-)) = 0$.
% and the application of 
Observe that $u_\# \circ F(s^+)$ maps into $(A_x, \Gamma^{-})$.

\begin{lemma}    \label{lemmahomotopy}
The definition of $\Psi_x$ is insensitive against homotopy equivalence of $x$.
\end{lemma}

\begin{proof}
%The canonical proof of goes verbatim through, but we give a sketch.
%
Let $x=(\varphi_\pm,u_\pm) \in \E^G(A,B[0,1])$ be a homotopy. Let $\pi_t:B[0,1] \rightarrow B$
be evaluation at time $t=0,1$.
Define two end points
$$x_t :=
\big (\varphi_\pm^{(t)} , u_\pm^{(t)} \big ) := \big(\overline \pi_t \circ \varphi_\pm, \overline \pi_t(u_\pm) \big) \in
\E^G(A,B)$$
($t=0,1$).
The exact sequence (\ref{splitAx}) produces a commutative diagram
\begin{equation}  \label{com1}
\xymatrix{
0 \ar[r] & \big (B,{\Gamma^\pm}^{(t)} \big ) \ar[r]^{j^{(t)}} &
\big (A_{x_t},
{\Gamma^\pm}^{(t)} \big ) \ar@<.5ex>[rr]^{p^{(t)}} &&
(A,\alpha) \ar[r] \ar@<.5ex>[ll]^{{s^\pm}^{(t)}} & 0 \\ 
0 \ar[r] & (B[0,1],\Gamma^\pm) \ar[r]^{j} \ar[u]^{\pi_t} & 
(A_x,\Gamma^\pm) \ar@<.5ex>[rr]^{p} \ar[u]^{\lambda_t}
& & (A,\alpha) \ar[r] \ar@<.5ex>[ll]^{s^\pm} \ar@{=}[u] & 0, \\ 
}
\end{equation}
where %$\sigma_t$ is evaluation at time $t$
%and
$\lambda_t:=(\mbox{id}_A, \overline \pi_t)$.
%
%
%We make three copies of the minus version of the split exaxt sequence (\ref{splitAx}): one for %$(\varphi_\pm,u_\pm)$
%and two for the end points
%$$x_t :=
%\big (\varphi_\pm^{(t)} , u_\pm^{(t)} \big ) := \big(\overline \pi_t \circ \varphi_\pm, %\overline \pi_t(u_\pm) \big) \in
%\E^G(A,B)$$
%($t=0,1$). %, and
%We link them with the equivariant evaluation maps
%$\sigma_t: (B, \Gamma^-) \rightarrow \big (B, {\Gamma^-}^{(t)} \big)$, $\lambda:=(\mbox{id}_A, %\overline \pi_t):(A_x,\Gamma^-) \rightarrow
%\big (A_{x_t}, {\Gamma^-}^{(t)} \big)$ and $\mbox{id}_A:A \rightarrow A$
%to get a commutative diagram. Hence
%\begin{equation}  \label{dia}
%j^{(t)} \circ \sigma_t  = j
%\end{equation}
%
%
% and link them with evaluation maps induced by
%evaluation at time $t=0,1$ to get a commuting diagramm.
%
Note that
$$
\overline \lambda_t (u_g ) = \big(\alpha_{g g^{-1}},
\overline \pi_t (u_- u_+^*) \big ) =
%\overline{(\mbox{id}_A, \overline \pi_t)}
u_g^{(t)},
$$
whence Lemma \ref{lemmakreuz} applies to $\lambda_t$ and the $\Gamma^+$-cocycle $u$.
Also,
Lemma \ref{lemmakreuz} applies to $\overline \pi_t(u_\pm )= u_\pm^{(t)}$.
%also $\pi_t$ and $u_\pm$ are applicable
%also here Lemma \ref{lemmakreuz} applies.
%
%\begin{eqnarray*}
%&&\pi_t(U_g b) = \overline \pi_t (U_g) \pi_t (b),\\
%&&%(\mbox{id}_A, \overline \pi_t)
%\lambda \big (U_g \cdot (a,m) \big) = \big(\alpha_{g g^{-1}} a,
%\overline \pi_t (U_- U_+^* m) \big ) =
%%\overline{(\mbox{id}_A, \overline \pi_t)}
%\overline \lambda \big (U_g \big ) \lambda(a,m).
%\end{eqnarray*}
Thus by Lemma \ref{lemmakreuz} and diagram (\ref{com1}) we get
\begin{eqnarray*}
F(\pi_t)  \circ \Psi_x &=&
F(\pi_t) \circ {u_-}_\#^{-1} \circ F(j)^{-1} \circ \big (u_\# \circ F(s^+) - F(s^-) \big) \\
&=& { u_-^{(t)}}_\#^{-1}  \circ F(\pi_t) \circ F(j)^{-1} \circ \big (u_\# \circ F(s^+) - F(s^-) \big) \\
&=& {u_-^{(t)} }_\#^{-1}  \circ F(j^{(t)})^{-1}  \circ F(\lambda_t) \circ \big (u_\# \circ F(s^+) - F(s^-) \big) \\
&=& { u_-^{(t)}}_\#^{-1}  \circ F(j^{(t)})^{-1} \circ \big ( u^{(t)}_\# 
\circ F(\lambda_t)  \circ F(s^+) -
F(\lambda_t)  \circ F(s^-) \big)   \\
&=& { u_-^{(t)}}_\#^{-1}  \circ F(j^{(t)})^{-1} \circ \big ( u^{(t)}_\#  \circ F({s^+}^{(t)}) -
F({s^-}^{(t)}) \big)
\; = \;\Psi_{x_t}.
\end{eqnarray*}
%We use this to compute $F(\pi_t)  \circ \Psi_x = \Psi_{x_t}$ for $t=0,1$.
By homotopy invariance of $F$ we have $F(\pi_0)= F(\pi_1)$ and thus
$\Psi_{x_0} = \Psi_{x_1}$.
\end{proof}
%As usual for canonical homotopy arguments, they are insensitive
%if they are done for group actions 

\begin{lemma}    \label{lemmadegenerate}
Let $x,d \in \E^G(A,B)$ where $d$ is degenerate.
Then $\Psi_{x+d} = \Psi_x$.
\end{lemma}

\begin{proof}
Like the proof of Lemma \ref{lemmahomotopy} the proof is rather insensitive
between the group and inverse semigroup case, and it is also similar to the
proof of Lemma \ref{lemmahomotopy}, so we omit the details
and refer to Thomsen's paper \cite{thomsen}.
%We only note that $\sigma_t$ is replaced by $\sigma(b) = V_1 b V_1^*$
%and $\lambda_t$ by $\lambda 
\end{proof}

We may summarize Lemmas \ref{lemmahomotopy} and \ref{lemmadegenerate}
as follows.

\begin{corollary}
The map $\Psi$ canonically induces a map
$$\Psi:\F^G(A,B) \rightarrow \mbox{Hom}(F(A),F(B))$$
by $\Psi_{[x]} := \Psi_x$ for $x \in \E^G(A,B)$.

\end{corollary}

\begin{lemma}    \label{lemmafunctoriality}
%The map $\Psi$ is natural in the following sense:
For every unital $*$-homomorphism 
$\lambda: (C,\gamma) \rightarrow (D, \delta)$ (where $C$ and $D$ are %stable and
unital)
one has
$$F(\lambda \otimes \mbox{id}_\calk) \circ \Psi_{[x]} = \Psi_{(\lambda \otimes \mbox{id}_\calk)_*[x]} $$
for $[x] \in \E^G \big((A,\alpha), (C \otimes \calk, \gamma \otimes \mbox{triv}) \big)$.
\end{lemma}

\begin{proof}
The proof is similar to the proof of Lemma \ref{lemmahomotopy}, and
rather insensitive between the group and inverse semigroup case,
and thus we refer to Thomsen's paper \cite{thomsen}.
One uses Lemma \ref{lemmaphifunct}.
\end{proof}

\section{The abelian group homomorphism $\Psi'$}

\label{sectionPsiprime}

In this section we shall define a variation $\Psi'$ of the map $\Psi$
so as that the functoriality of Lemma \ref{lemmafunctoriality}
holds also in the non-unital case.
It will then follow that $\Psi'$ is an abelian group homomorphism.
We shall also remove the stability restriction on $B$.
Also $\Phi^{-1}$ will be implemented in order to switch from $\E^G$ to $KK^G$.

From now on $B$ need not longer be stable!

\begin{definition}
{\rm
Fix a one-dimensional projection $e$ in $(\calk,\mbox{triv})$ and define $c_A:A \rightarrow A \otimes \calk$
to be the corner embedding $c_A(a)= a \otimes e$ for all objects $A$ in ${\bf C^*}$.
}
\end{definition}

\begin{definition}    \label{psiprime}
{\rm
Consider the canonical split exact sequence
\begin{displaymath}    	%equation}  \label{exactsequenceunit}
0 \longrightarrow \big (B \otimes \calk,\beta \otimes \mbox{triv} \big ) \stackrel{j_B}{\longrightarrow} 
\big (B^+ \otimes \calk,\beta^+  \otimes \mbox{triv} \big )
\stackrel{p_B}{\longrightarrow}  \big (C^*(E) \otimes \calk,\tau  \otimes \mbox{triv} \big ) \longrightarrow 0,
\end{displaymath}
where $(B^+,\beta^+)$ denotes the $G$-equivariant unitization
of $(B,\beta)$, see Definition \ref{defunitization}.
%$\big (B \oplus C^*(E),
%\beta \oplus \mu \big )$ of $(B,\beta)$.
Let $F$ be a stable, homotopy invariant, split-exact functor from ${\bf C^*}$
to ${\bf Ab}$.
For every $z \in KK^G(A,B)$ we define an abelian group homomorphism
$$\Psi'_{z} : F(A,\alpha) \rightarrow F(B,\beta)$$
by
$$\Psi'_{z} =  F(c_B)^{-1} \circ F(j_B)^{-1}  \circ \Psi_{ {j_B}_* {c_B}_* \Phi^{-1}(z) }.$$
%$$\Psi'_{[x]} = F(j_B)^{-1} \circ  \Psi_{{j_B}_* [x]}.$$
}
\end{definition}

%The definition
The occurrence of $F(j_B)^{-1}$ is here valid,
%is valid,
as $F(p_B) \circ \Psi_{ {j_B}_* {c_B}_* \Phi^{-1}(z)}
= \Psi_{{p_B}_* {j_B}_* {c_B}_* \Phi^{-1}(z)} = \Psi_{[0]} = 0$
by Lemma \ref{lemmafunctoriality} and $F(j_B)$ is injective by split-exactness of $F$.

\begin{lemma}     \label{lemmapsi2funct}
%The last lemma
%Lemma \ref{lemmafunctoriality} is also true for $\Psi'$ and non-unital %$G$-algebras
%$B,C$ and $\lambda$, that is,
For any %(possibly non-unital)
$*$-homomorphism $\lambda:(B,\beta) \rightarrow (C,\gamma)$
%between stable $G$-algebras
one has
$$\Psi'_{\lambda_* (z)}  = F(\lambda) \circ \Psi'_{z}.$$
\end{lemma}

\begin{proof}
By Definition \ref{deffunctorialityE}, $\lambda_*$ commutes with $\Phi^{-1}$, and
by Lemma \ref{lemmafunctoriality} we get
\begin{eqnarray*}
\Psi'_{\lambda_* (z)} &=& F(c_C)^{-1} \circ F(j_C)^{-1}  \circ \Psi_{  {j_C}_* {c_C}_*  \lambda_* \Phi^{-1}(z) } \\
&=& F(c_C)^{-1} \circ F(j_C)^{-1}  \circ \Psi_{ (\lambda^+ \otimes id_\calk)_* {j_B}_* {c_B}_*  \Phi^{-1}(z) } 
%& =& F(c_B)^{-1} \circ F(j_B)^{-1}  (\lambda^+ \otimes 1_\calk)_* \circ   \Psi_{ {j_B}_* {c_B}_*  \Phi^{-1}(z) } 
= F(\lambda) \circ \Psi'_{z} .
%\Psi'_{\lambda_* [x]} &=& F(j_C)^{-1} \circ \Psi_{{j_C}_* \lambda_* [x]}
%= F(j_C)^{-1} \circ \Psi_{\lambda_*^+ \circ {j_B}_* [x]} \\
%&=& F(j_C)^{-1} \circ F(\lambda^+) \circ \Psi_{{j_B}_* [x]}
%= F(\lambda) \circ \Psi'_{[x]} .
\end{eqnarray*}
\end{proof}

\begin{lemma}    \label{lemmapsi2group}
The map
$$\Psi':KK^G(A,B) \rightarrow \mbox{Hom}(F(A,\alpha), F(B,\beta))$$
is an abelian group homomorphism.
\end{lemma}

\begin{proof}
If $\Psi'$ was additive then by the functoriality of
Lemma \ref{lemmapsi2funct} the collection of the maps $\Psi'$ would form a natural transformation
from the functor $KK^G(A,-): {\bf C^*} \rightarrow {\bf Ab}$ to the functor
$\mbox{Hom}(F(A),F(-)): {\bf C^*} \rightarrow {\bf Ab}$.
Both functors are stable, homotopy invariant and split-exact by  %Theorem \ref{thmisochi}
Corollary \ref{corollaryProp}.
Then \cite[Lemma 3.2]{higson} states in the non-equivariant case that in such a situation the map
$\Psi'$ is automatically additive. The general proof works verbatim also $G$-equivariantly
in our setting.
\end{proof}

\begin{lemma}    \label{lemma1a}
Let $(A,\alpha)$ be a $G$-algebra. %stable.
Then
$$\Psi'_{1_A} = \mbox{id}_{F(A,\alpha)}.$$

\end{lemma}

\begin{proof}
%  Let
By Definition \ref{deffunctorialityE} one has
%we have
$${j_A}_* {c_A}_* \Phi^{-1}(1_A) = \Phi^{-1}({j_A}_* {c_A}_* 1_A)
= \Phi^{-1}([%\mbox{id}_A \otimes e
j_A c_A, A^+ \otimes \calk,0]) = [j_A c_A,0,\nu,\nu]$$ 
%\in
in $\E^G(A, A^+ \otimes \calk)$,
where the last identity may be chosen by Definition \ref{defPhi}, and where
%
%Choose some $x \in \E^G(A,A)$ such that $\Phi([x])= 1_A$.
%We may %write $x$ as
%take $x=(\mbox{id}_A,0, \nu,\nu)$,
%where
$\nu:G\rightarrow \calm(A^+ \otimes \calk)$ is the cocycle $\nu_g := \alpha^+_{g g^{-1}} \otimes \mbox{id}$.
%Notice that $\nu (\alpha \otimes \mbox{id})\nu^* = \alpha \otimes \mbox{id}$, so that $\nu_\# = \mbox{id}$.  %1$.

%Then ${j_A}_* {c_A}_* [x] = [j_A c_A,0, \nu \oplus 0, \nu \oplus 0] \in \F^G(A \otimes \calk,A^+ \otimes \calk)$.
Consider now Definition \ref{defpsi} with respect to $(\varphi_\pm,u_\pm):= (j_A c_A,0,\nu,\nu)$.
% (j_A,0, \nu \oplus 0, \nu \oplus 0)$.
We have $\Gamma^+ = \Gamma^- = \alpha^+_{g g^{-1}} \otimes \mbox{id}$, $u = u_+ = u_- = \nu$
%, $u_g = (\alpha_{g g^{-1}},\alpha_{g g^{-1}})$,
and ${u_-}_\#=\mbox{id}$, $u_\#=\mbox{id}$.  %1$.
% is just the identity map.
%
%Note that $s_- =0$.
%Hence the cocycle $u$ of Definition \ref{defAx} is
%$u_g = (\alpha_{g g^{-1}},\alpha_{g g^{-1}})$.
%We have $\overline j_A(\nu_g)= \nu = {u_-}_g$, which we shall apply to Lemma
%\ref{lemmakreuz}.
Hence
\begin{eqnarray*}
\Psi'_{1_A} &=& F(c_A)^{-1}  \circ F(j_A)^{-1} \circ \Psi_{[j_A c_A,0,\nu,\nu]}  \\
%\Psi'_{\Phi^{-1}(1_A)} &=& F(j_A)^{-1} \circ \Psi_{{j_A}_*[x]}  \\
&=&
F(c_A)^{-1}  \circ F(j_A)^{-1} \circ {u_-}_\#^{-1} \circ F(j)^{-1} \circ \big (u_\# \circ F(s^+) - F(s^-) \big) \\
&=&
%{\nu}_\#^{-1} \circ F(j_A)^{-1} \circ F(j)^{-1} \circ F(s^+) \; = \; \mbox{id}_{F(A)}
%{\nu}_\#^{-1} \circ
F(c_A)^{-1}  \circ F(j_A)^{-1}  \circ F(j)^{-1}
%F(j  \circ j_A \circ c_A)^{-1} 
\circ \big (F(s^+) - F(s^-) \big) \; = \; \mbox{id}_{F(A)}
%&=&
%F(j  \circ j_A)^{-1} \circ F(s^+) - F(s^-)
\end{eqnarray*}
as the difference $s^+ - s^- = j \circ j_A \circ c_A$ happens to be a $*$-homomorphism and
thus $F(s^+) - F(s^-) = F(s^+ - s^-)$.
\end{proof}

\section{The natural transformation $\xi$}

\label{sectionxi}

In this section we shall show %prove %the first main result,
Theorem \ref{theoremThomsen}.

%Since we have already reserved the letter $B$ for a stable $G$-algebra, we write $E$ for a not necessarily stable $G$-algebra.

%\begin{definition}     \label{defPsiprimeprime}
%{\rm
%Let $F$ be a stable, homotopy invariant, split-exact functor from ${\bf C^*}$
%to ${\bf Ab}$.  Let $A$ and $E$ be any $G$-algebras.
%Define
%$$\Psi''_E: KK^G(A,E) \rightarrow \mbox{Hom}(F(A),F(E))$$
%by
%$$\Psi''_z = F(c_E)^{-1} \circ \Psi'_{\Phi^{-1}({c_E}_*(z))} .$$
%}
%\end{definition}

\begin{definition}    \label{defxi}
{\rm
Let $F$ be a stable, homotopy invariant, split-exact functor from ${\bf C^*}$
to ${\bf Ab}$.
Let $d \in F(A,\alpha)$. 
There is a natural transformation
$$\xi : KK(A , - ) \rightarrow F(-)$$
defined by
$$\xi_B(z) =  \Psi'_z (d)$$ 
%\Big ( F(c_E)^{-1} \circ \Psi'_{\Phi^{-1}({c_E}_*(z))} \Big)(d) $$
for $z \in KK^G(A,B)$.
}
\end{definition}

That $\xi$ is a natural transformation follows from %Lemmas \ref{lemmaphifunct},
Definition \ref{deffunctorialityE}, and Lemmas
\ref{lemmapsi2funct} and \ref{lemmapsi2group}.

%The next lemma shows how $\Psi$ can be used to express Kasparov elements.

%The next lemma demonstrates how we can get back the Kasparov element $[x]$ from $\Psi_{[x]}$.

\begin{lemma}    \label{lemmaK1a}
Consider the maps $\Psi$ and   %,\Psi'$ and
$$\Psi':KK^G(A,B) \rightarrow \mbox{Hom}\big(KK^G(A,A),KK^G(A,B) \big)$$
%$$\Psi':\F^G(A,B) \rightarrow \mbox{Hom}\big(KK^G(A,A),KK^G(A,B) \big)$$
developed in Definitions \ref{defpsi} and \ref{psiprime}, % and \ref{defPsiprimeprime},
respectively,
for the homotopy invariant, stable, split-exact functor $F(-)=KK^G(A,-)$ from ${\bf C^*}$ to ${\bf Ab}$.
Then
%$$\Psi_{[x]} (1_A) =
$$\Psi'_{z} (1_A) = z$$
for all $z \in KK^G(A,B)$.
%$$\Psi'_{[x]} (1_A) = \Phi([x])$$
%for all $x \in \E^G(A,B)$.
\end{lemma}

\begin{proof}
%That is, set
%$$\Psi_x 
%= {u_-}_\#^{-1} \circ F(j^{-1}) \circ \big (u_\# \circ F(s^+) - F(s^-) \big):
%KK^G(A,A) \rightarrow KK^G(A,B)$$
%for
%Assume that $A$ is stable.
Let $x=(\varphi_\pm,u_\pm) \in \E^G(A,B)$, where $B$ is stable.
Then we compute
\begin{eqnarray*}
&& \Psi_{x} (1_A)  \\ 
&=& {u_-}_\#^{-1} \circ F(j)^{-1} \circ \big (u_\# \circ F(s^+) - F(s^-) \big) (1_A) \\
&=&
{u_-}_\#^{-1} \circ F(j)^{-1}  \big (u_\#  [(id_A,\varphi_+) ,(A_x,\Gamma^+),0]
-
[(id_A,\varphi_-) ,(A_x,\Gamma^-),0]  \big )\\
&=&
{u_-}_\#^{-1} \circ F(j)^{-1}  \big ([(id_A,\varphi_+) ,(A_x,\Gamma^+ u^*),0]
- [(id_A,\varphi_-) ,(A_x,\Gamma^-),0]  \big)  \\
&=&
{u_-}_\#^{-1} \big ((\varphi_+,\varphi_-), (B \oplus B, (u_+ \beta u_-^*, u_- \beta u_-^*) ), T\big )  \\
&=&
\big ((\varphi_+,\varphi_-), (B \oplus B, (u_+ \beta, u_- \beta) ),  T \big )  \\
&=& \Delta(x) = \Phi([x]),
\end{eqnarray*}
where $B \oplus B$ is equipped with the grading $\epsilon(x,y)=(x,-y)$
and $T$ is the flip operator.
Then, with Definitions \ref{psiprime} and \ref{deffunctorialityE},
\begin{eqnarray*}
\Psi'_{z} (1_A) &=& F(c_B)^{-1} \circ  F(j_B)^{-1} \circ \Psi_{{j_B}_* {c_B}_* \Phi^{-1}(z)} (1_A)\\
&=& F(c_B)^{-1} \circ  F(j_B)^{-1} \circ \Phi \big ( {j_B}_* {c_B}_* \Phi^{-1}(z) \big )  = z .
\end{eqnarray*}
%$$\Psi'_{[x]} (1_A) = F({j_B})^{-1} \circ \Psi_{{j_B}_* [x]} (1_A)
%= F({j_B})^{-1} \circ \Phi( {j_B}_* [x])  = \Phi([x]) .$$
%by Lemma \ref{lemmaphifunct}.
%Suppose that $A$ is not necessarily stable. 
%With that we obtain
%% (where $A$ need no longer be stable)
%$$\Psi''_{z} (1_A) = F(c_E)^{-1} \big ( \Psi'_{\Phi^{-1}({c_E}_* (z))} (1_A) \big )
%= F(c_E)^{-1} \big ( \Phi( \Phi^{-1}({c_E}_* (z)) )  \big ) = z .$$
\end{proof}

\begin{proposition}    \label{propuniqueness}
Given $F$ and $d$ as in Definition \ref{defxi}, $\xi$ is the only
existing natural transformation from $KK^G(A,-)$ to $F(-)$ such that
$$\xi_A(1_A) =d.$$ 
\end{proposition}

\begin{proof}
That $\xi_A(1_A) =d$ follows from Lemma \ref{lemma1a}.
It remains to prove uniqueness of $\xi$.

%Consider the map $\Psi$ developed in Definition \ref{defpsi}
%for the homotopy invariant, stable, split-exact functor $K(-):=KK^G(A,-)$,
%and denote it by $\Lambda$. That is, set
%$$\Lambda_x 
%= {u_-}_\#^{-1} \circ K(j^{-1}) \circ \big (u_\# \circ K(s^+) - K(s^-) \big):
%KK^G(A,A) \rightarrow KK^G(A,B)$$
%for $x=(\varphi_\pm,u_\pm) \in \E^G(A,B)$.
%Then we compute
%\begin{eqnarray*}
%&&  \Lambda_{x} (1_A)  \\ 
%&=& {u_-}_\#^{-1} \circ K(j)^{-1} \circ \big (u_\# \circ K(s^+) - K(s^-) \big) (1_A) \\
%&=&
%{u_-}_\#^{-1} \circ K(j)^{-1}  \big (u_\#  [(id_A,\varphi_+) ,(A_x,\Gamma^+),0]
%-
%[(id_A,\varphi_-) ,(A_x,\Gamma^-),0]  \big )\\
%&=&
%{u_-}_\#^{-1} \circ K(j)^{-1}  \big ([(id_A,\varphi_+) ,(A_x,\Gamma^+ u^*),0]
%- [(id_A,\varphi_-) ,(A_x,\Gamma^-),0]  \big)  \\
%&=&
%{u_-}_\#^{-1} \big ((\varphi_+,\varphi_-), (B \oplus B, (u_+ \beta u_-^*, u_- \beta u_-^*) ), T%\big )  \\
%&=&
%\big ((\varphi_+,\varphi_-), (B \oplus B, (u_+ \beta, u_- \beta) ),  T \big )  \\
%&=& \Delta(x) = \Phi([x]),
%\end{eqnarray*}
%where $B \oplus B$ is equipped with the grading $\epsilon(x,y)=(x,-y)$
%and $T$ is the flip operator.
%Then, with Definition \ref{psiprime},
%$$\Lambda'_{[x]} (1_A) = K({j_B})^{-1} \circ \Lambda_{{j_B}_* [x]} (1_A)
%= K({j_B})^{-1} \circ \Phi( {j_B}_* [x])  = \Phi([x])$$
%by Lemma \ref{lemmaphifunct}.

Now consider another natural transformation $\eta:KK(A,-) \rightarrow F(-)$
such that $\eta_A(1_A) = d$.
Define $K(-)=KK^G(A,-)$.  % as in Lemma \ref{lemmaK1a}.
Denote the $\Psi$ for $K$ by $\Psi^{(K)}$ for clearity.
%Denote the $\Psi$ of Lemma \ref{lemmaK1a} by $\Psi^{(K)}$ for clearity.

We have a commuting diagram
$$
\xymatrix{
KK^G(A,C) \ar[r]^{K(f)} \ar[d]_{\eta_C} &  KK^G(A,D)   \ar[d]^{\eta_D}  \\
F(C) \ar[r]^{F(f)} &  F(D)  \\
}
$$
for all homomorphisms $f \in {\bf C^*}(C,D)$.
Since $\Psi^{(K)}_{[x]}$ and ${\Psi^{(K)}_{z}}'$ %and ${\Psi^{(K)}_z}''$
are only compositions of such maps
$K(f)$, we also have
\begin{equation}     \label{etapsi}
\eta_B \circ {\Psi^{(K)}_z}'  = \Psi'_z \circ \eta_A .
\end{equation}
%$$\Psi' \circ \eta_A = \eta_B \circ {\Psi^{(K)}}'.$$

%Observe that
%$$F(\Lambda_x) = \Psi_x, \quad F({\Psi^{(K)}}'_{[x]}) = \Psi'_{[x]}.$$

%Hence
%we have a commuting diagramm
%$$
%\xymatrix{
%KK^G(A,A) \ar[r]^{\Lambda'_{[x]}} \ar[d]_{\eta_A} &  KK^G(A,B)   \ar[d]^{\eta_B}  \\
%F(A,\alpha) \ar[r]^{\Psi'_{[x]}} &  F(B,\beta).  \\
%}
%$$
Thus
$$\eta_B  ( z  ) = \eta_B \big ({\Psi^{(K)}_{z}}'(1_A) \big)
= \Psi'_{z} \big(\eta_A(1_A) \big ) = \Psi'_{z}(d) = \xi_B  (z  )$$
%
%$$\eta_B \big (\Phi([x]) \big ) = \eta_B \big ({\Psi^{(K)}_{[x]}}'(1_A) \big)
%= \Psi'_{[x]} \big(\eta_A(1_A) \big ) = \Psi'_{[x]}(d) = \xi_B \big (\Phi([x]) \big )$$
%The claim follows then by the bijectivity of $\Phi$.
by Lemma \ref{lemmaK1a}.
\end{proof}

Definition \ref{defxi} and Proposition \ref{propuniqueness}
sum then up to:
%Given

\begin{corollary}
%There is a 
Theorem \ref{theoremThomsen} is true.
\end{corollary}

\section{The universality theorem}

\label{sectionuniversal}

In this section we shall deduce Theorem \ref{theoremHigson}
%from Theorem \ref{theoremThomsen}
%(cf. \cite[Theorem 4.5]{higson}).
as described in \cite[Theorem 4.5]{higson}.

\begin{definition}    \label{defFadditive}
{\rm
A functor $F: {\bf C^*} \rightarrow {\bf A}$ into an additive category
is called split-exact, homotopy invariant and stable if the
functor $H^A(-)=\mbox{Hom}(F(A),F(-))$ from ${\bf C^*}$ to the abelian groups
has these properties for all objects $A$ in ${\bf C^*}$.
}
\end{definition}

For convenience of the reader we recall %define
another characterization
of split-exact, homotopy invariant, stable functors into additive categories,
see \cite[p. 269]{higson}.

\begin{lemma}   \label{lemmaFadditivecat}
A functor $F: {\bf C^*} \rightarrow {\bf A}$ into an additive category
${\bf A}$ is stable, homotopy invariant and split-exact if and only if
\begin{itemize}
\item[(a)]
$F(f)$ is invertible for every %$G$-equivariant
corner embedding
$f \in {\bf C^*} (A,  A \otimes \calk)$,

\item[(b)]
$F(f)=F(g)$ for all homotopic $f,g \in {\bf C^*}(A,B)$, and

\item[(c)]
for every split exact sequence
$$\xymatrix{
0 \ar[r] & A \ar[r]^{j} &
D \ar@<.5ex>[r]^{p}
&
B \ar[r] \ar@<.5ex>[l]^{s} & 0
}
$$
%$$0 \rightarrow A \rightarrow B \rightarrow C \rightarrow 0$$
the map
$F(A) \oplus F(B) \rightarrow F(D)$ defined by
$$F(j) \circ p_1 + F(s) \circ p_2$$
is
an isomorphism, where $p_1,p_2$ denotes the projection maps.
% where $p_A: A \oplus C \rightarrow A$
%and $p_C: A \oplus C \rightarrow C$ denotes the projection maps.

\end{itemize}
\end{lemma}

\begin{lemma}   \label{lemmafunctK}
%Consider the conditions of Lemma \ref{lemmaK1a}.
Consider $\Psi'$ for the %split-exact, stable homotopy invariant
functor $F(-)=KK^G(A,-)$.
Then
$$\Psi'_{w} (z) = z \otimes_B w$$
for all $w \in KK^G(B,C)$ and $z \in KK^G(A,B)$.
%
%$$\Psi'_{[x]} (z) = z \otimes_B \Phi([x])$$
%for all $x \in \E^G(B,C)$ and $z \in KK^G(A,B)$.
\end{lemma}

\begin{proof}
%Then
%Let $x \in \E^G(B,C)$.
By the functoriality of the Kasparov product and Lemma \ref{lemmaK1a} we have
\begin{eqnarray*}
&&  \Psi_{w}' (z) = F({c_C})^{-1} \circ F({j_C})^{-1} \circ \Psi_{ {j_C}_* {c_C}_* \Phi^{-1}(z)}  (z \otimes_B 1_B)  \\ 
&=& F({c_C})^{-1} \circ F({j_C})^{-1}  \circ {u_-}_\#^{-1} \circ F(j)^{-1} \circ \big (u_\# \circ F(s^+) - F(s^-) \big) (z \otimes_B 1_B) \\
&=&  z \otimes_B \Psi_{w}' (1_B)   = z \otimes_B w .
\end{eqnarray*}
Actually, the last $\Psi'$ refers to the functor $F(-)=KK^G(B,-)$.
%
%\begin{eqnarray*}
%&&  \Psi_{x} (z) = \Psi_{x} (1_A \otimes_A z)  \\ 
%&=& {u_-}_\#^{-1} \circ K(j)^{-1} \circ \big (u_\# \circ K(s^+) - K(s^-) \big) (1_A \otimes_A z) \\
%&=&  \Psi_{x} (1_A) \otimes_A z  = \Phi([x]) \otimes_A z.
%\end{eqnarray*}
%
%With that we obtain
%$$\Psi''_{w} (z)  = F(c_C)^{-1} \big ( \Psi'_{\Phi^{-1}({c_C}_*(w))} (z) \big )
%= F(c_C)^{-1} \big (  z \otimes_A \Phi(\Phi^{-1}({c_C}_*(w)))   \big )  = z \otimes_B w .$$
\end{proof}

\begin{theorem}
Theorem \ref{theoremHigson} is true.

\end{theorem}

\begin{proof}
Consider a functor $F$
%: {\bf C^*} \rightarrow {\bf A}$
as in Definition \ref{defFadditive}.
Let $A$ be an object in ${\bf C^*}$.
Apply Definition \ref{defxi} to the split-exact, stable, homotopy invariant functor
$H^A: {\bf C^*} \rightarrow {\bf Ab}$ defined by
$$H^A %:{\bf C^*} \rightarrow
(-) = \mbox{Hom}(F(A),F(-))$$
%$$H(f)x = F(f) \circ x \qquad \forall f \in {\bf C^*}(B,C), \, x \in \mbox{Hom}(F(A),F(B))$$
and the element $d = 1_{F(A)} \in H^A(A)$.  %\mbox{Hom}(F(A),F(A))$.

We obtain a natural transformation
\begin{equation}     \label{nt1}
\xi^A:KK^G(A,-) \rightarrow \mbox{Hom}(F(A),F(-)).
\end{equation}
%and the element $d = 1_{F(A)} \in \mbox{Hom}(F(A),F(A))$.

Define the functor $\hat F: {\bf K^G} \rightarrow {\bf A}$ by
\begin{equation}     \label{nt2}
\hat F(z) = \xi_B^A(z)
\end{equation}
for all $z \in KK^G(A,B)$.
By Proposition \ref{propuniqueness},    %Theorem \ref{theoremThomsen},
$$\hat F(1_A)= \xi_A^A(1_A)= 1_{F(A)}.$$
Since by definition
$$H^A(f)(w) = F(f) \circ w$$
for $f \in {\bf C^*}(B,C)$ and $w \in \mbox{Hom}(F(A),F(B))$,
and $\Psi'_z$ is just a composition of such $H^A(f)$s,
notice that
\begin{equation}   \label{psicirc}
\hat F(z)= \Psi'_{z} (1_{F(A)}) = \Psi'_{z} \circ 
1_{F(A)} = \Psi'_{z}  \quad \in \quad \mbox{Hom}(F(A),
F(B)).
\end{equation}
%$$\hat F(\Phi([x]))= \Psi'_{[x]} (1_{F(A)}) = \Psi'_{[x]} \circ 1_{F(A)} = \Psi'_{[x]}  \in \mbox{Hom}(F(A),
%F(B)).$$

We compute the functoriality of $\hat F$ as follows.
Consider $z \in KK^G(A,B)$ and  $w \in KK^G(B,C)$.
Then with Lemma \ref{lemmafunctK}, identity (\ref{etapsi}) and (\ref{psicirc}) we %have
compute
\begin{eqnarray*}
&& \xi^A_C \big (z \otimes_B w \big ) = \xi_C^A \big ( \Psi'_{w} (z) \big )
= \Psi'_{w} \big (\xi_B^A(z) \big )
= \Psi'_{w} \big (\hat F(z) \big )\\
&=& %\Psi'_{[x]} (1_{F(A)} \circ \hat F(z))
\Psi'_{w} %\circ 1_{F(A)}
\circ \hat F(z)
= \Psi'_{w} (1_{F(A)})  \circ \hat F(z)
= \hat F(w) \circ \hat F(z)  .
\end{eqnarray*}
%(where $\Psi'$ is incompletely notated in the sense it is not indicated to which
%functor and $G$-algebras it refers).

Let $f \in {\bf C^*} (A,B)$.
By (\ref{psicirc}), %and Lemmas \ref{lemmaphifunct}
Definition \ref{lemmaphifunct} and Lemma \ref{lemmapsi2funct} we have
$$\hat F(\kappa(f))= \hat F(f_*(1_A)) = \Psi'_{f_*(1_A)} = F(f) \circ \Psi'_{1_A}
= F(f) \circ \hat F(1_A) = F(f).$$
%This shows $\hat F \circ C = F$.

We are going to show uniqueness of $\hat F$.
Let now $\hat F:{\bf C^*} \rightarrow {\bf A}$ be any given functor with $\hat F \circ \kappa = F$.
%Then
For $z \in KK^G(A,B)$ and $f \in {\bf C^*}(B,C)$ we then have
$$\hat F( f_*(z)) = 
\hat F(z \otimes_B f_*(1_B)) = F(f) \circ \hat F(z)  = H^A(f) \big (\hat F(z) \big ).$$
Hence (\ref{nt2}) defines a natural transformation (\ref{nt1}), which by
Proposition \ref{propuniqueness}  %Theorem \ref{theoremThomsen}
is uniquely determined.
% ($d=1_{F(A)}$).
Hence $\hat F$ is uniquely determined.
%
%
%\begin{eqnarray*}
%&& \xi_B(z \otimes_A \Phi([x])) = \xi_B({\Psi_{[x]}^{(K)}}' (z))
%= \Psi'_{[x]} (\xi_A(z))
%= \Psi'_{[x]} (\hat F(z))\\
%&=& %\Psi'_{[x]} (1_{F(A)} \circ \hat F(z))
%\Psi'_{[x]} %\circ 1_{F(A)}
%\circ \hat F(z)
%= \Psi'_{[x]} (1_{F(A)})  \circ \hat F(z)
%= \hat F(\Phi([x])) \circ \hat F(z)
%\end{eqnarray*}
\end{proof}

\section{Non-unital inverse semigroups}

\label{sectionnonunital}

\begin{corollary}
Corollary \ref{corollarynonunital} is true.
\end{corollary}

\begin{proof}
If $G$ is declared to be a non-unital
%(= not necessarily unital)
inverse semigroup
(even it may have a unit), then we define $G$-algebras and
$KK^G$-theory as before, with the only difference that the $G$-action
$\alpha:G \rightarrow \mbox{End}(A)$ on a $C^*$-algebra is not required to be
unital, % (but still a semigroup homomorphism),
and similar so for Hilbert modules.
Then we adjoin unconditionally a unit $1$ to $G$ to obtain $G^+:= G \sqcup \{1\}$
and regard it as a unital inverse semigroup.
Then every non-unital $G$-action can be extended to a unital $G^+$-action,
and every unital $G^+$-action can be restricted to a non-unital $G$-action.
This one-to-one correspondence shows that the $C^*$-categories
${\bf C^*_G}$ and ${\bf C^*_{G^+}}$ and the $KK$-theories
${\bf K^G}$ and ${\bf K^{G^+}}$ are the same
in a trivial way.
Corollary \ref{corollarynonunital} follows then by applying Proposition \ref{propositionFunctor}
and Theorems \ref{theoremThomsen} and \ref{theoremHigson} to $G^+$.

Similarly we may view a countable discrete groupoid $\calg$ as an
inverse semigroup $G:= \calg \sqcup \{0\}$ by adjoing a zero element $0$,
%which is declared to have a zero element,
which always has to act as the zero operator on $C^*$-algebras and Hilbert modules
as already noted,
and where $g h:=0$ in $G$ if $g,h \in \calg$ are incomposable.
\end{proof}

{\bf Acknowledgement.}
We thank the %UFSC
the Universidade Federal de Santa Catarina in Florian\'opolis
for the support we received when developing the content
of this paper in 2013. %and
%for which
We have published a short version %,
%describing
%which describes only the differences to Thomsen's paper,
in arXiv under the same title %name
in 2014.
%, which was supposed to be read in parallel to K. Thomsen's paper.
Since %this
that version turned out to be %was
too brief and sketchy for an official
%a journal
publication,
we decided to write this self-contained paper.
We thank Alain Valette for suggesting to write a %longer,
self-contained version.

\if 0

\section{}

\begin{definition}  \label{defKaspCocycle}
{\rm
Let $(A,\alpha)$ and $(B,\beta)$ be $G$-algebras.
An {\em %equivariant
$A,B$-cocycle}
is a quadruple
$$(\varphi_\pm,u_\pm):=(\varphi_+,\varphi_-,u_+,u_-)
\in \big(\mbox{Hom}(A,\calm(B \otimes \calk)) \big)^2 \times \big(\calm(B \otimes \calk)^G
\big)^2
$$
where
$\varphi_+$ and $\varphi_-$ denote $*$-homomorphisms
$A \rightarrow \calm(B \otimes \calk)$, $u_+$ and
$u_-$ denote $\beta \otimes \mbox{id}_\calk$-cocycles
$G \rightarrow \calm(B \otimes \calk)$, and the following properties are satisfied:
\begin{itemize}
\item[(a)] Equipping $\calm(B \otimes \calk)$ with the
$u_+  \overline{\beta_g \otimes id_\calk} u_+^*$-$G$-action, $\varphi_+$ is
$G$-equivariant, i.e.
$${u_+}_g  \overline{\beta_g \otimes id_\calk} \big( \varphi_+(a) \big)
{{u_+}_g}^*  = \varphi_+ \big( \alpha_g(a) \big),$$
\item[(b)] analogously, $\varphi_-$ is $G$-equivariant, 
%${u_-}_g \overline{\beta_g \otimes id_\calk} \big( \varphi_-(a) \big)
%{{u_-}_g}^*  = \varphi_- \big( \alpha_g(a) \big)$,
\item[(c)] $\varphi_+(a) - \varphi_-(a) \in B \otimes \calk$,
\item[(d)] ${u_+}_g - {u_-}_g \in B \otimes \calk$
\end{itemize}
for all $a$ in $A$ and $g$ in $G$, where all identities and multiplications
take place in $\calm(B \otimes \calk)$.
}
\end{definition}

Thomsen often considers $\overline{\alpha}$-invariant operators,
for which we shall use the following definition.
% and appended
%existence result.

\begin{definition}
{\rm
Let $(A,\alpha)$ be a $G$-algebra.
An operator $V$ in $\call(A)$ is called {\em $\overline{\alpha}$-invariant}
if $V$ commutes with the map $\alpha_g \in \mbox{End}(A)$ for all $g \in G$.
}
\end{definition}

Note that then $V^*$ automatically commutes also with $\alpha_g$. Indeed,
we have $\langle x, \alpha_g V^* y \rangle =
\alpha_g \langle \alpha_{g^{-1}} x,  V^* y \rangle
= \alpha_g \langle \alpha_{g^{-1}} V x,  y \rangle
= \langle V x,  \alpha_g y \rangle$
by identity 1 of Definition \ref{defHilbert}.

%For convience
%
%We shall assume that $G$ has a unit. If not so, we add one.
%Moreover, we may allow $G$ to have a zero element and require
%that this zero element acts always as the zero element on $C^*$-algebras
%and Hilbert modules, and being zero in the universal algebra $C^*(E_G)$.
%In this way we can consider more cases, for example
%discrete groupoids by regarding them as an inverse semigroup with a zero element
%($g h = 0$ in the inverse semigroup if $g$ and $h$ are incomposable in the groupoid).
%For simplicity, we shall not mention zero elements any more for the remainder of this paper.

\begin{modification}  \label{modificationU}
Given a cycle $x=(\varphi_{\pm},u_{\pm}) \in \E^G(A,B)$, Thomsen considers an algebra
$$A_x  \,:=\,  \{(a,m) \in A \oplus \calm(B \otimes \calk)\,\,| \,\, \varphi_+(a) = m \,\, \mbox{modulo} \,\, B \otimes \calk \}$$
and a $G$-action $\gamma^{u_+}$ on it. We need to define the cocycle
$u$ for $(A_x,\gamma^{u_+})$ by
\begin{equation}  \label{modifiedu}
u_g(a,m) = (\alpha_{g g^{-1}}(a), {u_-}_g {u_+}_g^* m )
\end{equation}
rather than $u_g(a,m) = (a, {u_-}_g {u_+}_g^* m )$
as in Thomsen's paper.
\end{modification}

A discussion (or ``proof") of Modification
\ref {modificationU} (and of Modification \ref{lateModification} below) can be found in the next section.

\begin{modification}
Soon afterwards, Thomsen considers the canonical projection $p:A_x \rightarrow A$.
Instead of $\overline p(u_g) = 1$ as in Thomsen's proof, we have here $\overline p(u_g) = \alpha_{g g^{-1}}=:\nu_g$ for $g \in G$.
Note that $\nu: G \rightarrow \calm(A)$ is an $\alpha$-cocycle.
Hence $p_* \circ u_\# = \nu_\# \circ p_*$ by \cite[Lemma 3.3]{thomsen},
so that we can conclude $p_* \circ [u_\# \circ {s^{u_+}}_* - {s^{u_-}}_*] =0$
as in Thomsen's paper.
% (justifying $j_*^{-1}$ in $\Psi_x$ by split exactness).
\end{modification}

Lately in his paper, Thomsen considers the unitization $A^+$ of a $G$-algebra. This does not
work for inverse semigroups in this form and rather we have to use
the following unitization.
% and exact sequence.
Only here we need the fact that $S$ has a unit, in order that the universal commutative $C^*$-algebra $C^*(E)$
freely generated by the set $E$ of (commuting) idempotent elements of $G$
is even unital.
The $C^*$-algebra $C^*(E)$ is
endowed with the $G$-action $\mu_g(e) = g e g^*$ for $e \in E$
and $g \in G$, see \cite{khoshkamskandalis2004}.
% (making $C^*(E)$ unital).

\begin{lemma}
Let $(A,\alpha)$ be a $G$-algebra. Then $A \oplus C^*(E)$ is a $G$-algebra under the multiplication
$(a \oplus e)(b \oplus f) = ab + e b + f a \oplus e f$ with $a,b \in A$ and
$e,f \in E$
%$e,f$ idempotent elements in $G$
(where $f a := \alpha_f(a)$) and diagonal $G$-action
$\alpha \oplus \mu$.
%_g (a \oplus e) = \alpha_g(a) \oplus \beta_g(e)$,
%where $\mu$ denotes the $G$-action on $C^*(E)$
%given by $\mu_g(e) = g e g^*$ for $e \in E$ (this is the
%$G$-action on $\C \rtimes E \cong C^*(E)$ from \cite{khoshkamskandalis2004}).
%
% (where $\beta_g(e)= g e g^*$ the $G$-action on $\C \rtimes E = C^*(E)$).
\end{lemma}

\begin{proof}
%Of course, $A \oplus C^*(E)$ would be the norm closure of the union of the %subalgebras $A \oplus D_F$ indexed by the finite subsets
%$F$ of $E$, where $D_F$ denotes the subalgebra of $C^*(E)$
%generated by $F$, if every $A \oplus D_F$ was a $C^*$-algebra.
%It is thus sufficient to supply a $C^*$-norm on every $A \oplus D_F \cong A \oplus %\C^N$ ($F \cong \C^N$ by Gelfand's representation).
%By induction assume that we have already a $C^*$-norm on the algebra $B:= A \oplus %\C^k$ ($0 \le k < N$).
%
Let $Z$ denote the dense algebra of $C^*(E)$ consisting of all
linear combinations of elements of $E$. Since $E$ is a linear base of $Z$,
we can extend $\alpha: E \rightarrow \calm(A): e \mapsto \alpha_e$ to a homomorphism $\alpha:Z \rightarrow
\calm(A)$. In this sense we can extend the above defined multiplication from $A \oplus E$
to $A \oplus Z$.

We are going to show that $A \oplus Z$ has a $C^*$-norm.
By writing $A \oplus Z$ as a union of subalgebras $A \oplus D_F$ indexed by the finite subsets
$F$ of $E$, where $D_F$ denotes the finite dimensional $C^*$-subalgebra of $C^*(E)$
generated by $F$, it is
sufficient to supply a $C^*$-norm on every single (then automatically closed)
$A \oplus D_F \cong A \oplus \C^N$
(where $D_F \cong \C^N$ by Gelfand's representation).
By induction hypothesis assume that we have already a $C^*$-norm on the smaller algebra $B:= A \oplus \C^k$ ($0 \le k < N$).
Denote the unit of the second factor of $B \oplus \C$
% ( $\cong A \oplus \C^{k+1}$)
by $e \in Z$.

As $e$ and the second factor $\C^k$ of $B$
are orthogonal, we may extend $\alpha_e$ to an operator $\alpha_e'$
in $\call_B(B)$ by defining
$\alpha_e'(a \oplus f) := \alpha_e(a) \oplus 0_k$ for $a \in A$ and $f \in \C^k$,
and thus $\alpha_e' \in \calm(B)$.
If $\alpha_e \in \calm(A) \backslash A$ then $\alpha_e' \in \calm(B) \backslash B$
by the before-mentioned orthogonality.
%
%$\alpha_e \oplus 0_k \in \calm(B)
%\backslash B$.
Hence $B \oplus \C \cong B + \C \alpha_e'
\subseteq \calm(B)$ is a $C^*$-algebra extending $B$.
%which
%completes the induction step.
%

For the other possible case that $\alpha_e \in A$, note that $\alpha_e' \in B$,
and thus
$\phi:B \oplus \C \longrightarrow B \oplus^{C^*} \C$ with $\phi(b \oplus \lambda e) = b + \lambda \alpha_e' \oplus \lambda e$ defines
a $*$-isomorphism to the $C^*$-direct sum $B \oplus^{C^*} \C$,
and so $B \oplus \C$ is evidently a $C^*$-algebra again extending $B$.
This completes the induction step.
%It is easy to verify that $A \oplus C^*(E)$ is even a $G$-algebra.

%Nur warum is $A \oplus C^*(E)$ abgeschlossen?
Note that the projection $A \oplus D_F \rightarrow D_F$ is a contractive $*$-homomorphism
of $C^*$-algebras
and so we obtain a canonical contractive projection $\overline{A \oplus Z} \rightarrow C^*(E)$.
Consequently, we get a canonical $*$-isomorphism $\overline{A \oplus Z} \rightarrow A \oplus C^*(E)$.
Finally, a straightforward check %of the axioms of Definition \ref{defHilbert}
shows that $A \oplus C^*(E)$ is a $G$-algebra.
%
%Consequently, $A \otimes C^*(E)$ embeds in $\overline{A \otimes Z}$ and the image is %closed, whence we conclude that $\overline{A \otimes Z} \cong A \otimes C^*(E)$.
%
\end{proof}

\begin{modification}
Instead of the split exact sequence involving $B,B^+$ and $\C$ in Thomsen's paper we have to use the canonical split exact sequence
\begin{equation}  \label{exactsequenceunit}
0 \longrightarrow (B,\beta) \stackrel{j_B}{\longrightarrow}  (B^+,\beta^+ )
\stackrel{p_B}{\longrightarrow} (C^*(E),\mu) \longrightarrow 0,
\end{equation}
where $(B^+,\beta^+)$ denotes the $G$-equivariant unitization $\big (B \oplus C^*(E),
\beta \oplus \mu \big )$ of $(B,\beta)$.
\end{modification}

\begin{modification}  \label{lateModification}
Lately in his paper, Thomsen shows that $\Psi'_{1_A} = id_F(A,\alpha)$.
%and works with ${j_A}_* (1_A)$.
%where $j_A : A \rightarrow A^+$ is the canonical embedding for $A^+:+ A \oplus C^*(E)$.
Here we note
that for the map $j_A$ of (\ref{exactsequenceunit}) and the cycle $1_A \in KK^G(A,A)$,
%(by Lemma \ref{lempsi})
the cycle
${j_A}_* (1_A) = (j_A,A^+,0) \in KK^G(A,A^+)$ corresponds to
$x=(s_{A^+} \circ j_A,0,\nu,\nu) \in  \E^G(A,A^+)$
(via the isomorphism $\Phi$ of Theorem \ref{thmisochi} below)
rather than $(s_{A^+} \circ j_A,0,1,1)$ as in Thomsen's paper,
where $s_{A^+} :A^+ \rightarrow A^+ \otimes \calk$ denotes the corner embedding,
and $\nu: G \rightarrow \calm(A^+ \otimes \calk)$ the $\alpha^+ \otimes id_\calk$-cocycle
$\nu_g = \alpha_{g g^{-1}}^+ \otimes id_\calk$.
%Everywhere in this context, $1$ in Thomsen's paper has to replaced by $\nu$.
\end{modification}

%\begin{modification}
%Lately in his paper, Thomsen shows that $\Psi'_{1_A} = id_F(A,\alpha)$
%and works with $1_A$ in $KK^G(A,A)$.
%We note that (by Lemma \ref{lempsi}) $1_A=(id_A,A,0)$ corresponds to
%$x=(s_A,0,\nu,\nu)$ in $\E^G(A,A)$ rather than $(s_A,0,1,1)$ in Thomsen's paper,
%where $s_A:A \rightarrow A \otimes \calk$ denotes the corner embedding,
%and $\nu: G \rightarrow \calm(A \otimes \calk)$ the $\alpha \otimes id_\calk$-cocycle
%$\nu_g = \alpha_{g g^{-1}} \otimes id_\calk$.
%%Everywhere in this context, $1$ in Thomsen's paper has to replaced by $\nu$.
%\end{modification}
%
%\begin{proof}
%We note that this proof goes through like Thomsen's proof.
%One only has the different $\gamma^{\nu_+}$ cocycle
%$u_g(a,m) = (\alpha_{g g^{-1}} (a), \nu_- \nu_+^* m) =
%(\alpha_{g g^{-1}} (a), (\alpha_{g g^{-1}} \otimes id_\calk) m)$
%on $A_x$ (see (\ref{modifiedu}))
%rather than $u_g = 1$ as in Thomsen's paper.
%This cocycle also intertwines with the canonical embedding map $j: \rightarrow A_x$
%
%\end{proof}

%It is easy to see that $\nu$ is a $\alpha \otimes id_\calk$-cycle,
%and for how the correspondence between $\E^G(A,B)$ and
%$KK^G(A,B)$ is defined (Theorem \ref{thmisochi}) see the proof
%of Lemma \ref{lempsi}.

\section{Proof of modifications}

\label{sectionproof}

In this section we will outline some of the computations
which need to be verified when adapting Thomsen's proof.
Parts of Thomsen's paper which are not discussed here are understood to need only little and straightforward adaption.
%
%As in Thomsen's paper, we will identify the multiplier algebra $\calm(A)$
%with $\call_A(A)$ in the particular case that $A = B \otimes \calk$,
%so we will use the identification $\calm(B \otimes \calk) \cong \call_{B \otimes \calk}(B %\otimes \calk)$ throughout.
%
%
%without further comment
%(where $A$ and $B$ are $G$-algebras).
%which have appear when working with an inverse semigroup $G$.
%
%
%
%Let us remark that inverse semigroups act as partial isometries, whereas
%groups act via unitary operators, and
%
Thomsen occasionally selects
(for example in Lemma 3.4 of \cite{thomsen} and in the injectivity proof of
Theorem 3.5 of \cite{thomsen})
a unitary path in the connected unitary group of a multiplier algebra.
Let us remark that we do not need to change anything there because these unitaries do not correspond to the unitaries
of a $G$-action.
%
%these pathes connect different pairs of partial isometries
%in connection with addition of cycles,
%and do not stem from the the unitaries of the action by the group $G$,
%so here we have
%no changes in Thomsen's proof.
%
%
%
%A restriction of a $G$-algebra $(A,G)$ to a $G$-invariant subalgebra $B$ is also
%a $G$-algebra $(B,G)$, as is it is obvious from Definition \ref{defHilbert};
%%
%%(This is obvious when we consider the original definition of a $G$-action in %\cite{burgiSemimultiKK} not involving
%%multiplier algebras.)
%this observation is needed in Lemma 3.1 of Thomsen's paper \cite{thomsen},
%which otherwise goes literally through also in our setting.
%
%It is important to note that the multiplication in a $G$-algebra $(A,\alpha)$ is %compatible
%in the sense that $\alpha_{g g^{-1}}(a) b = a \alpha_{g g^{-1}}(b)$
%%\alpha_{g g^{-1}}(a b)$
%for all $a,b \in A$ and $g \in G$. (Similarly,
%the module multiplications are compatible in a $G$-equivariant Hilbert bimodule.)
%
Sometimes Thomsen chooses $G$-invariant isometries as follows.

\begin{lemma}
Let $(B,\beta)$ be a $G$-algebra. There exist $\overline{\beta \otimes id_{\calk}}$-invariant
partial isometries $V_1$ and $V_2$ in $\calm(B \otimes \calk)$
such that $V_1 V_1^* + V_2 V_2^* = 1$.
\end{lemma}

\begin{proof}
Let $\calk$ be the compact operators on $\ell^2(\N)$. We define $V_i(b \otimes \theta_{e_x,e_y}) = b \otimes \theta_{e_{2x+i-2},e_y}$
for all $x,y \in \N$ and $b \in B$ ($i=1,2$), which we extend as an bounded linear operator to the Banach space $B \otimes \calk$.
\end{proof}

We are going to recall the definition of an equivariant $A,B$-cocycle from
\cite{thomsen}.
% (for an inverse semigroup $G$).

\begin{definition}  \label{defKaspCocycle}
{\rm
Let $(A,\alpha)$ and $(B,\beta)$ be $G$-algebras.
An {\em equivariant $A,B$-cocycle}
is a quadruple $(\varphi_\pm,u_\pm):=(\varphi_+,\varphi_-,u_+,u_-)$
where
$\varphi_+$ and $\varphi_-$ denote $*$-homomorphisms
$A \rightarrow \calm(B \otimes \calk)$, $u_+$ and
$u_-$ denote $\beta \otimes id_\calk$-cocycles
$G \rightarrow \calm(B \otimes \calk)$, and the following properties are satisfied:
\begin{itemize}
\item[1.] ${u_+}_g  \overline{\beta_g \otimes id_\calk} \big( \varphi_+(a) \big)
{{u_+}_g}^*  = \varphi_+ \big( \alpha_g(a) \big)$,
\item[2.] ${u_-}_g \overline{\beta_g \otimes id_\calk} \big( \varphi_-(a) \big)
{{u_-}_g}^*  = \varphi_- \big( \alpha_g(a) \big)$,
\item[3.] $\varphi_+(a) - \varphi_-(a) \in B \otimes \calk$,
\item[4.] ${u_+}_g - {u_-}_g \in B \otimes \calk$
\end{itemize}
for all $a$ in $A$ and $g$ in $G$.
}
\end{definition}

The class of equivariant $A,B$-cocycles is denoted by $\E^G(A,B)$.
It is important to notice, and clear from Lemma \ref{lemmacocycle} and the first identity of (\ref{defcocycle})
that the source and range projections of both ${u_+}_g$ and ${u_-}_g$
coincide with $\beta_{g g^{-1}}\otimes id_\calk$.
Hence, in a typical computation in Thomsen's proof, an expression like
${u_-}_g {{u_-}_g}^*$ (which would vanish in Thomsen's paper automatically) we may rewrite
for example as $\beta_{g g^{-1}}\otimes id_\calk$, permute it with the other operators (as it is in the center),
and let it absorb by another appearing $\beta_{g}\otimes id_\calk$.

%$$\beta_{g g^{-1}} \otimes id_\calk = {u_+}_g {u_+}_g}^*

%As in Thomsen's paper we say that two $A,B$-cycles $(\varphi_\pm, u_\pm)$
%and $(\psi_\pm,v_\pm)$ are {\em isomorphic}
%when there exists an $G$-equivariant automorphism $\gamma$ on $B\otimes \calk$
%such that $(\overline \gamma \circ \varphi_\pm, \overline \gamma(u_\pm))
%= (\psi_\pm,v_\pm)$, where $\overline \gamma_g(T) = \gamma_g T \gamma_{g^{-1}}$
%as in Definition \ref{defstrictlycont}. Definition \ref{defcocycle}
%is evidently invariant under the application of such $\gamma$'s.

\begin{lemma}  \label{lempsi}
Before \cite[Theorem 3.5]{thomsen}, Thomsen defines
a map assigning to each $A,B$-cocycle $x=(\varphi_\pm,u_\pm) \in \E^G(A,B)$
a Kasparov cycle $\Phi(x) =  \Phi(\varphi_\pm,u_\pm) =
(E,\varphi,F) \in  KK^G(A,B \otimes \calk)$.
%which is denoted by .
We claim that it is also a cycle when $G$ is an inverse semigroup.
\end{lemma}

\begin{proof}
%Recall from Thomsen's paper that
%$\varphi_+$ and $\varphi_-$ denote $*$-homomorphisms
%$A \rightarrow \calm(B \otimes \calk)$, and $u_+$ and
%$u_-$ denote $\beta \otimes id_\calk$-cocycles
%$G \rightarrow \calm(B \otimes \calk)$ such that
%$(\varphi_\pm,u_\pm)$ satisfies certain properties 1-5.
%
Recall from Thomsen's paper that $E$ is defined to be $(B \otimes \calk) \oplus
(B \otimes \calk)$ with the obvious $\Z/2$-grading, and $A$-module action
$\varphi=(\varphi_+,\varphi_-)$ on $E$. It is a $B \otimes \calk$-module in the
natural way, and $B \otimes \calk$ is endowed with the $G$-action $\beta \otimes id_\calk$.
Abbreviate $(\beta \otimes id_\calk,\beta \otimes id_\calk)$ by $\gamma$,
$(\overline{\beta \otimes id_\calk},\overline{\beta \otimes id_\calk})$ by $\overline \gamma$
and $(u_+,u_-)$ by $u$.
Recall also that the $G$-action on $E$ is denoted by $W$ in Thomsen's paper
and defined by $W_g= u_g \gamma_g$. The operator $F \in \call(E)$ is defined
by $F(x,y)= (y,x)$.
%$W_g(x,y)= ({u_+}_g \gamma_g, {u_-}_g \gamma_-)$.
%
We are going to verify that $E$ with $W:G \rightarrow \mbox{Lin}(E)$ is a $G$-Hilbert $B \otimes \calk$-module (Definition \ref{defHilbert}).
One has
$$W_{gh} = u_{gh} \gamma_{gh} = u_g \overline{\gamma_g}(u_h) \gamma_{g h}
= u_g \gamma_g u_h \gamma_{g^{-1}} \gamma_g \gamma_h = W_g W_h$$
because of the third identity of (\ref{defcocycle}) and because
$\beta_{g^{-1} g} \otimes id_\calk$ is in the center of $\calm(B \otimes \calk)$.
%by Definition \ref{defCstar}.
We have
$$
\langle W_g (x,y),W_g(a,b)\rangle  = \langle \gamma_g (x,y),u_g^* u_g \gamma_g(a,b)\rangle
= \gamma_g \big (\langle(x,y),(a,b)\rangle \big),
$$
%
%\begin{eqnarray}
%\langle W_g (x,y),W_g(a,b)\rangle &=& \langle \gamma_g (x,y),u_g^* u_g \gamma_g(a,b)\rangle\\
%&=& \gamma_g \big (\langle(x,y),(a,b)\rangle \big),
%\end{eqnarray}
because $\gamma_{g g^{-1}} = u_g^* u_g$ by the first identity
of the cocycle axioms (\ref{defcocycle}).
%Further, observe that
%$$W_g W_{g^{-1}} W_g = u_g \gamma_g u_{g^{-1}} \gamma_{g^{-1}}
%u_g \gamma_g = u_g u_g^* u_g \gamma_g = W_g$$
%by the second relation of (\ref{defcocycle}).
%
We have $\overline{\gamma_g}(u_{g^{-1}})= u_g^*$ by Lemma \ref{lemmacocycle},
and thus
$W_g W_{g^{-1}} = u_g \gamma_g u_{g^{-1}} \gamma_{g^{-1}}
= u_g u_g^*$, which is a self-adjoint projection in $\call(E)$.
%That $\varphi$ is $G$-equivariant follows like for groups $G$.
By a similar argument, and with conditions 1 and 2 of Definition \ref{defKaspCocycle}
we get
$$\varphi(\alpha_g(a)) = u_g \overline{\gamma_g}(\varphi(a)) u_g^*
= u_g \gamma_g \varphi(a) \gamma_{g^{-1}} \gamma_g u_{g^{-1}}\gamma_{g^{-1}}
= W_g \varphi_g(a) W_{g^{-1}}.$$
The $B$-module structure
on $E$ is compatible, in other words
\begin{eqnarray}
%W_e(\xi) b = u_e \gamma_e(\xi b)= u_e(\xi \gamma_e(b))= u_e(\xi) \gamma_e(b) = \xi \gamma_e(b)
W_e(\xi) b = u_e \gamma_e(\xi) b= \gamma_e(\xi) b = \xi \gamma_e(b)
\end{eqnarray}
for $\xi \in E, b \in B$ and $e$ an idempotent element in $G$, because $\gamma_{e} = u_e$ by (\ref{defcocycle}).
%Since $\gamma_{g^{-1} g}$ is in the center of $\call(E)$, it commutes with $u_h$, and thus
%$W_g W_h = u_g \gamma_g u_h \gamma_{g^{-1}} \gamma_g \gamma_h = W_{g h}$
%by the cocycle condition (\ref{defcocycle}).
%
A straightforward computation shows that the operator $W_g F W_{g^{-1}} - W_g W_{g^{-1}} F$
is in $(B \otimes \calk) \oplus (B \otimes \calk)$ because
${u_+}_g - {u_-}_g$ is in the ideal $B \otimes \calk$ by Definition \ref{defKaspCocycle}.
This verifies Definition \ref{defCycle} of a Kasparov cycle.
\end{proof}

%\begin{lemma}
%This is an adaption of Remark 2 on page 156 of \cite{kasparov1988}.
%In a $G$-equivariant Kasparov $A,B$-cycle $(\cale,T)$ we may assume
%that $U_g T U_{g^*} - T U_{g g^{-1}} \in calk(\cale)$ (so we do need to multiply with $A$ %here).
%\end{lemma}
%
%\begin{proof}
%Same proof as suggested by Kasparov but with
%$\varphi(g) = g(T) - T U_{g g^{-1}}$ rather than $\varphi(g) = g(T) - T$
%and applied to the technical Theorem 1 in \cite{burgiSemimultiKK}
%rather than
%%TheoremKasparov's original technical theorem.
%\end{proof}

\begin{theorem}  \label{thmisochi}
The map $\Phi$ of Lemma \ref{lempsi} induces an isomorphism
$\Phi: \big(\E^G(A,B) /\sim  \big ) \longrightarrow  KK^G(A,B \otimes \calk)$
(confer \cite[Theorem 3.5]{thomsen}).
%
%is also an isomorphism for an inverse semigroup $G$.
% also in our setting.
\end{theorem}

\begin{proof}
That $\Phi$ is well defined follows like for a group $G$,
as the evaluation map
$\pi_t: B \otimes C[0,1] \rightarrow B \otimes \C$
%C[0,1] \otimes_{ev_t} \C$
at time $t \in [0,1]$ is $G$-equivariant.

Surjectivity:
Thomsen uses here Remark 2 on page 156 of Kasparov's paper \cite{kasparov1988}; but this works also
in our setting by a similar proof as suggested by Kasparov but with
$\varphi(g) = U_g T U_{g^{-1}} - T U_{g g^{-1}}$ rather than $\varphi(g) = U_g T U_g - T$,
and applied to the technical Theorem 1 in \cite{burgiSemimultiKK}
rather than the technical Theorem 1.4 in \cite{kasparov1988}.
Some other standard simplifications in $KK$-theory work also for inverse semigroups
$G$ without essential modifications.

The $C^*$-algebra $B \otimes \calk$
is endowed with the $G$-action $\beta \otimes id_\calk$, which we denote by $\gamma$ for short.
Given a cycle $(E,\varphi,F_0)$ in $KK^G(A,B \otimes \calk)$
(for which we search a lift via $\Phi$), we can provide the same transformations as in Thomsen's paper and end up with an equivalent cycle
$\Big ((B \otimes \calk) \oplus (B \otimes \calk), \psi,\left (\begin{matrix}
0 & 1\\ 1 & 0
\end{matrix} \right) \Big )$, where the $G$-action $W$ (in the sense of Definition \ref{defHilbert})
on the $\Z/2$-graded Hilbert $B \otimes \calk$-module $(B \otimes \calk) \oplus (B \otimes \calk)$ of this cycle takes the diagonal form
$W=(S^+,S^-)$ for two Hilbert module $G$-actions
(in the sense of Definition \ref{defHilbert}) $S^+$ and $S^-$ on the ungraded Hilbert $B \otimes \calk$-module $B \otimes \calk$.
The $A$-action $\psi$ has also diagonal form $\psi=(\psi_+,\psi_-)$.
Thomsen considers the element $(\psi_\pm,u_\pm)$ in
the collection $\E^G(A,B)$ of $G$-equivariant $A,B$-cocycles, where
$u_\pm : B \otimes \calk \rightarrow B \otimes \calk$ are defined by
% where $u_\pm$ are two
%partially isometrical $\beta \otimes id_\calk$-cocycles, which we are going to verify.
$u_\pm = S^\pm \circ \gamma_{g^{-1}}$.
%where $\beta$ denotes the $G$-action on $B$.
%Abbreviate $\beta \otimes id_\calk$ by $\gamma$.

To check that $u_+$ (and similarly $u_-$) is a
$\gamma$-cocycle, we compute
\begin{eqnarray}
&& \langle {u_+}_g x,y \rangle = \langle S^+_g \gamma_{g^{-1}} x,y \rangle
= \langle S^+_g \gamma_{g^{-1}} x, S^+_g S^+_{g^{-1}} y \rangle
= \gamma_g (\langle \gamma_{g^{-1}} x, S^+_{g^{-1}} y \rangle )\\
&=&  \gamma_g \gamma_{g^{-1}} (x^*) \cdot \gamma_g S^+_{g^{-1}} (y)
= x^* \cdot  \gamma_g S^+_{g^{-1}} (y) = \langle x, \gamma_{g} S^+_{g^{-1}} y \rangle
\end{eqnarray}
for all $x$ and $y$ in $B \otimes \calk$, so that ${{u_+}_g}^* = \gamma_{g} S^+_{g^{-1}}$.
%The element $u_g$ is a partial isometry because $\gamma_{g^{-1}} \gamma_g$ and %$S^+_{g^{-1}} S^+_g$
%are in the center of $\calm(B \otimes \calk)$.
For an idempotent $e$ in $G$ and all $x,y \in B \otimes \calk$
we have
$S^+_e(x) y = x \gamma_e(y) = \gamma_e(x) y$  by Definition \ref{defHilbert},
so that we obtain $S^+_e = \gamma_e$.
This shows
$${{u_+}_g}^* {u_+}_g = \gamma_{g} S^+_{g^{-1}} S^+_g \gamma_{g^{-1}}
= \gamma_{g} \gamma_{g^{-1} g} \gamma_{g^{-1}} = \gamma_{g} \gamma_{g^{-1}},$$
the first identity of (\ref{defcocycle}), and similarly we get the second
and third identity.
%
%This also shows that ${{u_+}_g}^* {u_+}_g$ and ${{u_+}_g} {u_+}_g^*$ are in the center of %$\calm(B \otimes \calk)$.
%Further we have
%\begin{eqnarray}
%&& \overline{\gamma_g}({{u_+}_{g^{-1}}}) = \gamma_g S^+_{g^{-1}} \gamma_{g} \gamma_{g^{-1}}
%= \gamma_g S^+_{g^{-1}} S^+_{g g^{-1}} = {{u_+}_{g}}^*\\
%&& {u_+}_{g} \overline{\gamma_g}({{u_+}_{h}})
%= S^+_g \gamma_{g^{-1}} \gamma_g S^+_h \gamma_{h^{-1}} \gamma_{g^{-1}}
%= S^+_g S^+_{g^{-1} g} S^+_h \gamma_{h^{-1}} \gamma_{g^{-1}}
%= {u_+}_{g h},
%\end{eqnarray}
%the other identitites of (\ref{defcocycle}).
%

%In continuing the verification that $(\varphi_\pm,u_\pm) \in \E^G(A, B \otimes \calk)$
In continuing the verification  that $(\varphi_\pm,u_\pm)$ is an equivariant
$A,B \otimes \calk$-cocycle, we note that
since $S^+_{g^{-1} g} = \gamma_{g^{-1} g}$ we have
\begin{eqnarray}
M_2(B \otimes \calk) &\ni&  W_g \left (\begin{matrix}
0 & 1\\ 1 & 0
\end{matrix} \right) W_{g^{-1}} -
\left (\begin{matrix}
0 & 1\\ 1 & 0
\end{matrix} \right) W_g W_{g^{-1}}\\
&=& \left( \begin{matrix}
0 & S_g^+ \gamma_{g^{-1} g} S_{g^{-1}}^{-} - S_{g}^{-} \gamma_{g^{-1} g} S_{g^{-1}}^{-}\\
%S_g^- S_{g^{-1}}^{+} - S_{g}^{+} S_{g^{-1}}^{+} & 0
x & 0
\end{matrix} \right)
=
\left( \begin{matrix}
0 & ({u_+}_g  - {u_-}_g) {{u_-}_g}^*\\
%S_g^- S_{g^{-1}}^{+} - S_{g}^{+} S_{g^{-1}}^{+} & 0
%({u_-}_g  - {u_+}_g) {{u_+}_g}^* & 0
x & 0
\end{matrix} \right)
\end{eqnarray}
for a certain obvious but irrelevant $x$, and thus
$$B \otimes \calk \ni ({u_+}_g  - {u_-}_g) {{u_-}_g}^* {{u_-}_g} = ({u_+}_g  - {u_-}_g) \gamma_{g g^{-1}}
={u_+}_g  - {u_-}_g,$$
%(${u_+}_g  - {u_-}_g \in B \otimes \calk$ by point 4 in the definition
%of an equivariant $A,B$-cocycle in Thomsen's paper)
as required for $(\varphi_\pm,u_\pm)$ to be in $\E^G(A,B)$.
The identities 1 and 2 of Definition \ref{defcocycle} are easily checked
when noting that $S^+_e = \gamma_e$ ($e$ an idempotent)
are in the center of $\calm(B \otimes \calk)$.

%Finally, note that by the proof of Lemma \ref{lempsi} the $G$-action on $E$ via %$\Phi(\varphi_\pm,u_\pm)$
%is given by ${u_{\pm}}_g \gamma_g = S^{\pm}_g \gamma_{g^{-1} g} = S^{\pm}_g$,
%so recovers the original action $(S_+,S_-)$.
%

Now notice that by the proof of Lemma \ref{lempsi},
$\Phi(\varphi_\pm,u_\pm)$ yields the beforehand given Kasparov cycle from above again,
for example, the $G$-action on $E$ from $\Phi(\varphi_\pm,u_\pm)$
is given by ${u_{\pm}}_g \gamma_g = S^{\pm}_g \gamma_{g^{-1} g} = S^{\pm}_g$,
so recovers the original action $(S^+,S^-)$.

Injectivity: The injectivity proof goes literally through like Thomsen's proof.
\end{proof}

Let us now check that $u$ of Modification \ref{modificationU}  is a $\gamma^{u_+}$-cocycle.

\begin{proof}[Proof of Modification \ref{modificationU}]
%We recommend going through this proof only after reading
%Definition \ref{defKaspCocycle} below and the remark therafter.
%the cocycle conditions (\ref{defcocycle}) for $u$.
%We are going to verify that $u$ is a $\gamma^{u_+}$-cocycle.
We have given $G$-algebras $(A,\alpha)$ and $(B,\beta)$.
Write $\theta = \beta \otimes id_\calk$.
%Two $G$-actions $\gamma^{u_+}$ and $\gamma^{u_-}$ (jointly denoted by $\gamma^{u_\pm}$) on
The $G$-action $\gamma^{u_+}$ on
$A_x$ is defined by
$$\gamma_g^{u_+}(a,m) \,\,=\,\, (\alpha_g(a), {u_{+}}_g \overline \theta_g(m) {u_{+}}_g^*).$$
By Lemma \ref{lemmacocycle}, and since ${u_-}_g^* {u_-}_g$ is in the center
of $\calm(B \otimes \calk)$, we have
\begin{eqnarray}
&& u_g^* u_g(a,m) = (\alpha_{g g^{-1}}(a),{u_+}_g {u_-}_g^* {u_-}_g {u_+}_g^* m )
= (\alpha_{g g^{-1}}(a),{u_+}_g {u_+}_g^* {u_-}_g^* {u_-}_g m )\\
&=&  (\alpha_{g g^{-1}}(a), \theta_{g g^{-1}} m  \theta_{g g^{-1}} )
=(\alpha_{g g^{-1}}(a), {u_+}_{g g^{-1}} \overline \theta_{g g^{-1}}(m) {{u_+}_{g g^{-1}}}^* ) = \gamma^{u_+}_{g g^{-1}}(a,m).
\end{eqnarray}
This shows that $u_g^* u_g = \gamma^{u_+}_{g g^{-1}}$, and so the first identity
of (\ref{defcocycle}). The other identities of (\ref{defcocycle}) are checked similarly.
%Similarly we check $u_{g g^{-1}} = u_g u_g^*$ (see Lemma \ref{lemmacocycle}) and the last %identity of (\ref{defcocycle}).

We show that $A_x$ is invariant under the map $u_g$.
Let $(a,m) \in A_x$, so $\varphi_+(a) - m \in B \otimes \calk$.
By Definition \ref{defKaspCocycle}, identity 4, we get ${u_-}_g {u_+}_g^* - {u_-}_g {u_-}_g^*
\in B \otimes \calk$. Hence, by Definition \ref{defKaspCocycle}, identity 1
and the identity $\theta_{g g^{-1}} = {u_+}_{g g^{-1}}$ of (\ref{defcocycle})
we get modulo $B \otimes \calk$
\begin{eqnarray}
&& \varphi_+(\alpha_{g g^{-1}}(a))
= {u_+}_{g g^{-1}} \overline \theta_{g g^{-1}} \big ( \varphi_+(a) \big )  {u_+}_{g g^{-1}}^*
\equiv \overline \theta_{g g^{-1}} (m)\\
&=& \theta_{g g^{-1}} m = {u_-}_g {u_-}_g^* m \equiv {u_-}_g {u_+}_g^* m.
\end{eqnarray}
This proves that $u_g(a,m)$ is in $A_x$.
\end{proof}

\begin{proposition}
For every $x \in \E^G(A,B)$, Thomsen defines a
homomorphism $\Psi_x:F(A,\alpha) \rightarrow F(B,\beta)$
and shows that it does only depend on the class $[x]$ in $\E^G(A,B)/\sim$.
%
%We claim that it does not depend on the
%
%The homomorphism $\Psi_x$ in ${Hom}(F(A,\alpha)),F(B,\beta))$ defined in Thomsen's paper %depends
%only on the homotopy class in $\E^G(A,B)$.
\end{proposition}

\begin{proof}
%At several places the cocycle $u$ of Modification \ref{modificationU}
%occurs, and we have to check that
%
%Everything goes through like in Thomsen's proof as the evaluation map $\pi_t: B \otimes %C[0,1] \rightarrow B \otimes C[0,1] \otimes_{ev_t} \C$
%at time $t \in [0,1]$ is $G$-invariant.
Let $z = (\Phi_{\pm},U_\pm)$ be a homotopy in $\E^G(A,C[0,1] \otimes B)$,
and $x=(\overline{\pi_0 \otimes id_\calk} \circ \Phi_\pm,\overline{\pi_0 \otimes id_\calk}(U_\pm))$ its starting point in $\E^G(A,B)$, where $\pi_0:B[0,1] \rightarrow B$ denotes evaluation at zero.
Thomsen considers the spaces $A_x$ and $\cala:= A_z$ from Modification \ref{modificationU},
and the evaluation map
% and $y$.
%We only observe that the map
$\lambda:\cala \rightarrow A_x$ given by $\lambda(a,m) = (a,\overline{\pi_0 \otimes id_\calk}(m))$.
Let $U$ be the cocycle (\ref{modifiedu}) for $A_z$, and $u$ the one for $A_x$.
We only observe that we also have $\lambda U_g = u_g \lambda$ for our modified cocycles
(\ref{modifiedu}),
so that we can conclude
$\lambda_* \circ U_\# = u_\# \circ \lambda_*$ by \cite[Lemma 3.3]{thomsen} as in Thomsen's proof.

A similar %commutation
intertwining relation $\lambda u_g = w_g \lambda$ holds also for a later map $\lambda(a,m)=(a,V_1 m V_1^*)$ in Thomsen's proof, where
the identity $\Psi_{x+d} = \Psi_x$ for a degenerate
cycle $d$ is checked (here, $V_1$ and $V_2$ are $\overline{\beta \otimes id_\calk}$-invariant isometries satisfying $V_1 V_1^*+V_2 V_2^* =1$).
This proof goes also through literally.
%
%% in Thomsen's proof,
%where $\cala$ denotes $A_z$, satisfies $\lambda(U (a,m))= u(\lambda(a,m))$ also with our %slightly modified cocycle $u$ stated in Modification \ref{modificationU}, so that we can %conlude
%$\lambda_* \circ U_\# = u_\# \circ \lambda_*$ by \cite[Lemma 3.3]{thomsen} as in Thomsen's %proof.
%
%A similar commutation holds also for a later map $\lambda(a,m)=(a,V_1 m V_1^*)$ in %Thomsen's proof, where
%the identity $\Psi_{x+d} = \Psi_x$ for a degenerate
%cycle $d$ is checked ($V_1,V_2$ are $\overline{\beta \otimes id_\calk}$-invariant %isometries satisfying $V_1 V_1^*+V_2 V_2^* =1$);
%this proof goes also through literally.
\end{proof}

%A restriction of $G$-algebra $(A,G)$ to a $G$-invariant subalgebra $B$ is also
%a $G$-algebra $(B,G)$ by restricting the $G$-action of $A$ to $B$.

%\begin{remark}
%We shall assume that $G$ has a unit. If not so, we add one.
%Moreover, we may allow $G$ to have a zero element and require
%that this zero element acts always as the zero element on $C^*$-algebras
%and Hilbert modules, and being zero in the universal algebra $C^*(E_G)$.
%In this way we can consider more cases, for example
%discrete groupoids by regarding them as an inverse semigroup with a zero element
%($g h = 0$ in the inverse semigroup if $g$ and $h$ are incomposable in the groupoid).
%For simplicity, we shall not mention zero elements any more for the remainder of this %paper.
%\end{remark}

\begin{proof}[Proof of Modification \ref{lateModification}]
One easily checks that $\nu$ is a cocycle (see Definition \ref{defCycle}),
and $x$ %a valid equivariant $A,A^+$-cocycle (see Definition \ref{defKaspCocycle}),
%which is assigned to the Kasparov cycle
corresponds to $(j_A,A^+,0)$ via Lemma \ref{lempsi}.
%(The point is that the $G$-action on $A^+$ is $\nu_g \alpha^+_g = \alpha^+_g$.)
%
We note that the proof of the identity $\Psi'_{1_A} = id_F(A,\alpha)$ goes through like
in Thomsen's paper.
One only has the $\gamma^{\nu}$-cocycle
$$u_g(a,m) = (\alpha_{g g^{-1}} (a), \nu_- \nu_+^* m) =
(\alpha_{g g^{-1}} (a), (\alpha_{g g^{-1}}^+ \otimes id_\calk) m )$$
on $A_x$ (see (\ref{modifiedu}))
rather than the $\gamma^1$-cocycle $1$ as in Thomsen's paper.

As pointed out by Thomsen, $A_x$ is the direct sum of $A$ and $A^+ \otimes \calk$.
Thus, the above cocycle $u$ (and its restriction to $A^+ \otimes \calk$, which is also denoted by $u$) intertwines with the canonical projection
$j^{-1}:  A_x \rightarrow A^+ \otimes \calk$, so that we have
$u_{\#} j^{-1}_* = j^{-1}_* u_{\#}$ by \cite[Lemma 3.3]{thomsen}
(in Thomsen's paper one has $1_{\#} j_*^{-1} = j_*^{-1} 1_{\#}$ instead).
\end{proof}

%\section{Universality of $KK^G$}

We finally discuss the proof of Theorem \ref{theoremHigson}, which is the analog of Theorem 4.5 of Higson's paper \cite{higson}.
Its functorial proof works verbatim also $G$-equivariantly.
Higson uses \cite[Theorem 4.2.(i)]{higson} of his paper (note thereby that \cite[Theorem 4.2.(ii)]{higson},
and consequently \cite[Lemma 3.10]{higson} are not needed),
which is based on his \cite[Theorem 3.5]{higson}.
This theorem, however, is verified by Thomsen \cite{thomsen} at the very end of his paper
(Thomsen states, ``We claim that $\Lambda_x(1_A) = \xi[x]$ for all
$x \in \E^G(A,B)$.") and is thus also valid in our setting.
In the proof of \cite[Theorem 4.5]{higson}
one also needs \cite[Lemma 3.2]{higson}, which also
works literally in our setting.

%Unklar bei Higson Theorem 4.5, bei Beweis von uniqueness. Brauche (waere $F(f)=\hat F \circ C(f)$ ) $\hat F(f_*(1_B)) = f_* \hat F(1_B)$ (ist eh $f_* %1_{F(B)} = F(f)$, da Funktor
%eins auf eins abbildet? )
%fuer beliebigen Functor $\hat F$, wo $f: B \rightarrow A$ $*$-homomorphism; um zu zeigen,
%dass $\hat F$ eine natuerliche Transformation von $KK(A,-)$ nach $Hom(F(A),F(-))$.
%
%Higson, Lemma 3.2 muss gecheckt werden. Scheint zu funktionieren.
%
%Higson, Thm 4.2 (i) scheint zu funktionieren; mehr brauche nicht.

\fi

\bibliographystyle{plain}
\bibliography{references}

\end{document}